\documentclass[10pt,reqno]{amsart}
\usepackage[utf8]{inputenc}
\usepackage{amsmath,times,hyperref, multicol, mathrsfs}
\usepackage{amsthm}
\usepackage{amssymb}
\usepackage{amsfonts}
\usepackage{url}
\usepackage{latexsym}
\usepackage{graphicx}
\usepackage[usenames,dvipsnames]{color} 
\usepackage{enumitem}
\setlist[enumerate]{leftmargin=*}
\usepackage[font={small}]{caption}
\usepackage{subcaption}
\usepackage{cite}
\usepackage{comment}
\usepackage{mathtools}
\mathtoolsset{showonlyrefs}

\usepackage{xcolor}

\usepackage{tcolorbox}
\tcbuselibrary{theorems}

\newtcbtheorem[number within=section]{mytheo}{Problem}%
{colback=blue!5,colframe=blue!35!black,fonttitle=\bfseries}{th}

%


\newcommand{\0}{{\color{lightgray}0}}
\newcommand{\A}{\mathrm{A}}
\renewcommand{\S}{\mathrm{S}}

\newcommand{\D}{\mathbb{D}}
\newcommand{\R}{\mathbb{R}}
\newcommand{\Q}{\mathbb{Q}}
\newcommand{\B}{\mathcal{B}}
\newcommand{\C}{\mathbb{C}}
\newcommand{\CC}{\mathscr{C}}

\renewcommand{\leq}{\leqslant}
\renewcommand{\geq}{\geqslant}

\newcommand{\N}{\mathbb{N}}

\newcommand{\ran}{\operatorname{ran}}
\newcommand{\diag}{\operatorname{diag}}
\newcommand{\norm}[1]{\| #1 \|}

\newcommand{\h}{\mathcal{H}}

\newcommand{\V}{\mathcal{V}}

\newcommand{\T}{\mathbb{T}}

\newcommand{\minimatrix}[4]{\begin{bmatrix} #1 & #2 \\ #3 & #4 \end{bmatrix}  }

\renewcommand{\hat}{\widehat}

\renewcommand{\vec}[1]{{\bf #1}}

\renewcommand{\phi}{\varphi}


\let\oldenumerate=\enumerate
	\def\enumerate{
	\oldenumerate
	\setlength{\itemsep}{5pt}
	}
\let\olditemize=\itemize
	\def\itemize{
	\olditemize
	\setlength{\itemsep}{5pt}
	}


\theoremstyle{definition}
\newcounter{problem}
\newtheorem{Theorem}[equation]{Theorem}
\newtheorem{Lemma}[equation]{Lemma}
\newtheorem{Proposition}[equation]{Proposition}

\newtheorem{Definition}[equation]{Definition}
\newtheorem{Problem}[problem]{Problem}
\newtheorem{Example}[equation]{Example}

\newtheorem{Remark}[equation]{Remark}

\allowdisplaybreaks

\numberwithin{equation}{section}

\author{Stephan Ramon Garcia}
\address{Department of Mathematics and Statistics, Pomona College, 610 N. College Ave., Claremont, CA 91711, USA}
\email{stephan.garcia@pomona.edu}

\author{Ryan O'Loughlin}
\address{School of Mathematics, University of Leeds, Leeds, LS2 9JT U.K.}
\email{R.OLoughlin@leeds.ac.uk}

\author{Jiahui Yu}
\address{Department of Mathematics, Massachusetts Institute of Technology, Simons Building, 77 Massachusetts Avenue, Cambridge, MA 02139-4307, USA}
\email{jiahu878@mit.edu}

\begin{document}

\title[Symmetric and Antisymmetric Tensor Products]{Symmetric and Antisymmetric Tensor Products for the Function-Theoretic Operator Theorist}

\thanks{Garcia is supported by NSF Grant DMS-2054002. O'Loughlin is grateful to EPSRC for financial support.}

\subjclass{46B28, 30H10, 47A30, 47A10}
\keywords{Tensor product, symmetric tensor product, antisymmetric tensor product, Hilbert space, Hardy spaces, norm, spectrum, shift operator, diagonal operator.}

\maketitle
\begin{abstract}
We study symmetric and antisymmetric tensor products of Hilbert-space operators, focusing on norms and spectra for some well-known classes favored by function-theoretic operator theorists.  We pose many open questions that should interest the field.
\end{abstract}

\section{Introduction}
Tensor products and their symmetrization have appeared in the literature since the mid-nineteenth century,  such as in Riemann's foundational work on differential geometry \cite{RMN1, RMN2}. 
Tensors describe many-body quantum systems \cite{naber2021quantum} and symmetric tensors underpin the foundations of general relativity \cite{diffgeom}. In a separate yet overlapping vein, multilinear algebra \cite{multbook} and representation theory \cite{repbook} utilize symmetric tensor product spaces.  

Decomposing a symmetric tensor into a minimal linear combination of tensor powers of the same vector arises in mobile communications, machine learning, factor analysis of $k$-way arrays, biomedical engineering, psychometrics, and chemometrics; see\cite{ref2, ref3, ref4, ref5, ref6} and the references therein. We refer the reader to \cite{refmain} for a study of this decomposition problem. Symmetric tensors also arise in statistics \cite{statreference}.

In quantum mechanics, many-body systems are represented in terms of tensor products of wave functions. In the simplest case, where the systems do not interact, the Hamiltonian of the many-body system corresponds to a symmetric tensor product of operators
\cite[Ch.~4, Sec.~9]{Kostrikinbook}. 
Recently there has been an endeavour within the physics community to study selfadjoint extensions of symmetric tensor products of operators  \cite{ext1, ext2, ext3}. Furthermore, the symmetric part of a quantum geometric tensor can be exploited as a tool to detect quantum phase transitions in $\mathcal{PT}$-symmetric quantum mechanics \cite{quantsym}.

Unfortunately, there is little literature about symmetric tensor products of non-normal operators. 
The purpose of this paper is to introduce the basic ideas to the function-theoretic operator theory community. We study some fundamental operator-theoretic questions in this area, such as finding the norm and spectrum of symmetric tensor products of operators. We work through some examples with familiar operators, such as the unilateral shift, its adjoint, and diagonal operators. Given the ramifications of symmetric tensor products in a broad spectrum of fields, we hope that initiating this study will shed new light on classical problems and lead to new directions of study for function-theoretic operator theorists.

The layout of this paper is as follows. Section \ref{Subsection:TensorSpace} introduces symmetric and antisymmetric tensor power spaces, the domains for the operators in Section \ref{Section:TensorOperators}. In Section \ref{Section:Basic} we collect results on operator-theoretic properties of symmetric tensor products of bounded operators.
Section \ref{Section:Norm} is devoted to the norm of such operators, while
Section \ref{Section:Spectrum} focuses on the spectrum.
We study symmetric tensor products of diagonal operators in Section \ref{Section:Diagonal},
the forward and backward shift operators in Section \ref{Chapter:ShiftAndAdjoint}, and  
the symmetric tensor product of shifts and diagonal operators in Section \ref{Chapter:ShiftDiagonal}.
We conclude in Section \ref{Section:Questions} with a host of open questions that should appeal to researchers in function-theoretic operator theory.

\section{Symmetric and antisymmetric tensor power spaces}\label{Subsection:TensorSpace}

Symmetric and antisymmetric tensor powers are familiar in mathematical physics, 
but less so in function-theoretic operator theory.  We summarize the basics, with abbreviated explanations
or without proof; see \cite[Sec.~I.5]{Bhatia} or \cite[Section 3.8]{SimonReal} for the details.

Let $\h$ be a complex Hilbert space, in which the inner product $\langle \cdot, \cdot \rangle$
is linear in the first argument and conjugate linear in the second.
We assume that $\h$ has a countable orthonormal basis. A superscript $^-$ denotes the closure with respect to the norm of $\h$.

Let $\B(\h)$ denote the space of bounded linear operators on $\h$.
For $\vec{u}_1, \vec{u}_2 \ldots , \vec{u}_n \in \h$, the \emph{simple tensor}
$\vec{u}_1 \otimes \vec{u}_2 \otimes \cdots \otimes \vec{u}_n : \h^n \to \C$ acts as follows:
\begin{equation*}
(\vec{u}_1 \otimes \vec{u}_2 \otimes \cdots \otimes \vec{u}_n)(\vec{v}_1,\vec{v}_2, \ldots, \vec{v}_n)
=\langle \vec{u}_1, \vec{v}_1 \rangle \langle \vec{u}_2, \vec{v}_2 \rangle \cdots \langle \vec{u}_n, \vec{v}_n \rangle.
\end{equation*}
A simple tensor is a conjugate-multilinear function of its arguments.
The map taking an $n$-tuple in $\h^n$ to the corresponding simple tensor is linear in each argument.

Let $\h^{ \hat{\otimes}n}$ denote the $\C$-vector space spanned by the simple tensors.
There is a unique inner product on $\h^{ \hat{\otimes}n} $ such that 
\begin{equation}\label{eq:InnerProductMultiply}
    \langle\vec{u}_1 \otimes \vec{u}_2 \otimes \cdots \otimes  \vec{u}_n, \, \vec{v}_1 \otimes \vec{v}_2 \otimes \cdots \otimes \vec{v}_n \rangle :=\langle \vec{u}_1, \vec{v}_1 \rangle \langle \vec{u}_2, \vec{v}_2 \rangle \cdots\langle \vec{u}_n, \vec{v}_n \rangle
\end{equation}
for all $\vec{u}_1, \vec{u}_2, \ldots, \vec{u}_n ,\vec{v}_1, \vec{v}_2, \ldots, \vec{v}_n\in \h$ \cite[Prop.~3.8.2]{SimonReal}.
Moreover,
\begin{equation*}
\norm{\vec{u}_1 \otimes \vec{u}_2 \otimes \cdots \otimes  \vec{u}_n} = \norm{ \vec{u}_1} \norm{\vec{u}_2} \cdots \norm{ \vec{u}_n}.
\end{equation*}

\begin{Definition}[Tensor powers of Hilbert spaces]
Let $\h^{\otimes 0} := \C$. For $n=1,2,\ldots,$ let $\h^{\otimes n}$ denote 
the completion of $\h^{ \hat{\otimes}n}$ with respect to the inner product \eqref{eq:InnerProductMultiply}. 
\end{Definition}

For $n=2$, we may write $\h\otimes \h$ instead of $\h^{\otimes 2}$.
If $\vec{e}_1, \vec{e}_2,\ldots$ is an orthonormal basis for $\h$, then 
$\vec{e}_{i_1} \otimes \vec{e}_{i_2} \otimes \cdots \otimes \vec{e}_{i_n}$ for $(i_1,i_2,\ldots,i_n) \in \N^n$
is an orthonormal basis for $\h^{\otimes n}$.  Here $\N := \{1,2,3,\ldots\}$ denotes the set of natural numbers.

Let $\Sigma_n$ be the group of permutations of $[n]:=\{1,2, \ldots,n\}$.
For all $\pi \in \Sigma_n$ and $\vec{u}_1,\vec{u}_2, \ldots, \vec{u}_n \in \h$, define
\begin{equation}\label{eq:PiSimple}
\hat{\pi}(\vec{u}_1 \otimes \vec{u}_2 \otimes \cdots \otimes \vec{u}_n )
:=\vec{u}_{\pi (1)} \otimes \vec{u}_{\pi (2)}\otimes \cdots \otimes \vec{u}_{\pi (n)}.
\end{equation}
The density of the span of the simple tensors ensures that $\hat{\pi}$ extends to a bounded linear map on $\h^{\otimes n}$.

\begin{Proposition}\label{Proposition:Unitary}
Let $\pi,\tau \in \Sigma_n$.
(a) $\hat{\pi \tau} = \hat{\pi} \hat{\tau}$.
(b) The map $\hat{\pi}$ on $\h^{\otimes n}$ is unitary.
\end{Proposition}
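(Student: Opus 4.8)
The plan is to establish both statements on simple tensors and then extend by linearity and continuity, using that each $\hat{\pi}$ is already known to be a bounded linear operator on $\h^{\otimes n}$ and that the simple tensors span a dense subspace. For (a), note that $\hat{\pi}\hat{\tau}$ and $\hat{\pi\tau}$ are both bounded linear operators, so it suffices to check that they agree on an arbitrary simple tensor $\vec{u}_1 \otimes \cdots \otimes \vec{u}_n$. Applying \eqref{eq:PiSimple} first for $\tau$ gives $\hat{\tau}(\vec{u}_1 \otimes \cdots \otimes \vec{u}_n) = \vec{u}_{\tau(1)} \otimes \cdots \otimes \vec{u}_{\tau(n)}$; abbreviating $\vec{w}_i := \vec{u}_{\tau(i)}$ and applying \eqref{eq:PiSimple} again for $\pi$ produces a simple tensor whose $i$-th factor is $\vec{w}_{\pi(i)} = \vec{u}_{\tau(\pi(i))}$. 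Comparing this with the $i$-th factor of $\hat{\pi\tau}(\vec{u}_1 \otimes \cdots \otimes \vec{u}_n)$ completes the simple-tensor identity, and the general case follows by continuity.

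For (b), I would argue that $\hat{\pi}$ is a surjective isometry. Surjectivity follows from (a): since the identity permutation $e$ satisfies $\hat{e} = I$ by \eqref{eq:PiSimple}, the operator $\hat{\pi^{-1}}$ is a two-sided inverse of $\hat{\pi}$. To see that $\hat{\pi}$ is isometric, I would compute on simple tensors, using \eqref{eq:InnerProductMultiply},
\begin{equation*}
\langle \hat{\pi}(\vec{u}_1 \otimes \cdots \otimes \vec{u}_n),\, \hat{\pi}(\vec{v}_1 \otimes \cdots \otimes \vec{v}_n)\rangle = \prod_{i=1}^n \langle \vec{u}_{\pi(i)}, \vec{v}_{\pi(i)}\rangle = \prod_{i=1}^n \langle \vec{u}_i, \vec{v}_i\rangle,
\end{equation*}
the final step being nothing more than a reindexing of a finite product by the bijection $\pi$; by \eqref{eq:InnerProductMultiply} this equals $\langle \vec{u}_1 \otimes \cdots \otimes \vec{u}_n, \vec{v}_1 \otimes \cdots \otimes \vec{v}_n\rangle$. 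Sesquilinearity and continuity then upgrade inner-product preservation from simple tensors to all of $\h^{\otimes n}$, and a surjective isometry is unitary.

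A quicker alternative for (b) is to observe that $\hat{\pi}$ merely permutes the orthonormal basis $\{\vec{e}_{i_1} \otimes \cdots \otimes \vec{e}_{i_n} : (i_1,\ldots,i_n) \in \N^n\}$, carrying it bijectively onto itself by permuting index tuples coordinatewise; any bounded operator that maps one orthonormal basis onto an orthonormal basis is unitary. I expect the only genuine obstacle in the whole argument to be the bookkeeping in (a): one must fix the convention for multiplying permutations in $\Sigma_n$ so that the nested substitution $\vec{u}_{\tau(\pi(i))}$ is read as $\vec{u}_{(\pi\tau)(i)}$ rather than $\vec{u}_{(\tau\pi)(i)}$. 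Everything else is a routine extension-by-density from simple tensors.
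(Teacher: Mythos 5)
Your proof is correct. Part (a) is essentially the paper's argument: both verify the identity on a simple tensor and extend by density. Your closing remark about the multiplication convention is well taken, and is in fact the honest version of the computation: with the definition \eqref{eq:PiSimple}, the $i$-th factor of $\hat{\pi}\hat{\tau}(\vec{u}_1\otimes\cdots\otimes\vec{u}_n)$ is $\vec{u}_{\tau(\pi(i))}$, so $\pi\mapsto\hat{\pi}$ is a homomorphism only if one reads $\pi\tau$ as ``first $\pi$, then $\tau$''; under the more common convention $(\pi\tau)(i)=\pi(\tau(i))$ the map is an anti-homomorphism, a point the paper's chain of equalities glosses over. For (b) you take a genuinely different, though equally standard, route: you show $\hat{\pi}$ is a surjective isometry, with surjectivity coming from (a) applied to $\pi^{-1}$ and isometry from reindexing the finite product of inner products (or, even faster, from your observation that $\hat{\pi}$ permutes the orthonormal basis of simple tensors of basis vectors). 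The paper instead computes the adjoint: moving $\hat{\pi}$ across the inner product and reindexing gives $\hat{\pi}^{*} = \hat{\pi^{-1}}$, after which (a) yields $\hat{\pi}^{-1}=\hat{\pi}^{*}$. Both arguments are sound and of comparable length, but the paper's version has the advantage of recording the identity $\hat{\pi}^{*}=\hat{\pi^{-1}}$ explicitly, which is precisely what is needed later to show that the symmetrizer $\S_n$ and antisymmetrizer $\A_n$ of \eqref{eq:SnAn} are selfadjoint (Proposition \ref{Proposition:OrthoProj}); your ONB-permutation shortcut is the most economical path to unitarity alone, but does not produce that byproduct.
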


\begin{proof}
(a) Since the span of the simple tensors is dense in $\h^{\otimes n}$, it suffices to observe that
\begin{align*}
\hat{\pi \tau }(\vec{u}_1 \otimes \vec{u}_2 \otimes \cdots \otimes \vec{u}_n )
&=\vec{u}_{(\pi \tau) (1)} \otimes \vec{u}_{(\pi \tau) (2)}\otimes \cdots \otimes \vec{u}_{(\pi \tau) (n)} \\
&=\hat{\pi} (\vec{u}_{ \tau (1)} \otimes \vec{u}_{\tau (2)}\otimes \cdots \otimes \vec{u}_{ \tau (n)}) \\
&=\hat{\pi} ( \hat{\tau} (\vec{u}_1 \otimes \vec{u}_2 \otimes \cdots \otimes \vec{u}_n ) )
\end{align*}
for any $ \vec{u}_1,\vec{u}_2,\ldots, \vec{u}_{n} \in \h$.

\medskip\noindent(b) For any $ \vec{u}_1,\vec{u}_2,\ldots, \vec{u}_{n},\vec{v}_1, \vec{v}_2 ,\ldots, \vec{v}_n \in \h$,
\eqref{eq:InnerProductMultiply} ensures that
\begin{align*}
& \langle\hat{\pi}(\vec{v}_1\otimes \vec{v}_2\otimes \cdots \otimes \vec{v}_n), \, \vec{u}_1 \otimes \vec{u}_2 \otimes \cdots \otimes \vec{u}_n\rangle \\
&\qquad=\langle\vec{v}_{\pi(1)}\otimes \vec{v}_{\pi(2)}\otimes \cdots \otimes \vec{v}_{\pi(n)}, \, \vec{u}_1 \otimes \vec{u}_2 \otimes \cdots \otimes \vec{u}_n\rangle \\
&\qquad= \prod_{i=1}^{n} \langle \vec{v}_{\pi(i)}, \vec{u}_i \rangle 
= \prod_{j=1}^{n} \langle \vec{v}_{j}, \vec{u}_{\pi^{-1}(j)} \rangle \\
&\qquad=\langle\vec{v}_{1}\otimes \vec{v}_2 \otimes \cdots \otimes \vec{v}_{n}, \, \vec{u}_{\pi^{-1}(1)} \otimes \vec{u}_{\pi^{-1}(2)} \otimes \cdots \otimes \vec{u}_{\pi^{-1}(n)}\rangle \\
&\qquad= \big\langle\vec{v}_1\otimes \vec{v}_2 \otimes \cdots \otimes \vec{v}_n, \, \hat{\pi^{-1}}(\vec{u}_1 \otimes  \vec{u}_2 \otimes \cdots \otimes \vec{u}_n)\big\rangle .
\end{align*}
Therefore, $\hat{\pi}^* = \hat{\pi^{-1}}$ and hence $\hat{\pi}^{-1} = \hat{\pi}^*$ by (a).
\end{proof}

We now define certain subspaces of $\h^{\otimes n}$ that respect
the action of the operators $\hat{\pi}$.

\begin{Definition}[Symmetric and antisymmetric tensor powers of Hilbert spaces]\hfill
\begin{enumerate}
\item Let $\h^{\odot 1} :=\h$ and
$\h^{\odot n} :=\{ \vec{v} \in \h^{\otimes n}: \text{$\pi(\vec{v})= \vec{v}$ for all $\pi \in \Sigma_n$} \}$
for $n \geq 2$.

\item Let $\h^{\wedge 1} := \{ \vec{0}\}$ and
$\h^{\wedge n} :=\{ \vec{v} \in \h^{\wedge n}: \text{$\pi(\vec{v})= -\vec{v}$ for all $\pi \in \Sigma_n$} \}$
for $n \geq 2$.
\end{enumerate}
\end{Definition}

We may write $\h \odot \h$ and $\h \wedge \h$ instead of $\h^{\odot 2}$ and $\h^{\wedge 2}$, respectively.
In this case, there is only one nonidentity $\pi \in \Sigma_2$.

\begin{Example}\label{Example:Hardy}
Let $H^2(\D)$ denote the Hardy space on the unit disk $\D$.
The monomials $1,z,z^2,\ldots$ are an orthonormal basis for $H^2(\D)$,
so the simple tensors $z^i \otimes z^j$ for $i,j=0,1,\ldots$ are an orthonormal basis for
$H^2(\D) \otimes H^2(\D)$.
The unitary map $z^i \otimes z^j \mapsto z^i w^j$ identifies $H^2(\D) \otimes H^2(\D)$ with $H^2(\D^2)$,
the Hardy space on the bidisk $\D^2$ \cite{bidisk1}.
Thus, we identify $H^2(\D) \odot H^2(\D)$ and $H^2(\D) \wedge H^2(\D)$ with
\begin{equation}\label{eq:H2Sym}
H^2_{\operatorname{sym}}(\D^2) := \{  f(z,w) \in H^2(\D^2) : \text{$f(z,w) =  f(w,z)$ for all $z,w \in \D$} \}
\end{equation}
and
\begin{equation}\label{eq:H2Anti}
H^2_{\operatorname{asym}}(\D^2) := \{  f(z,w) \in H^2(\D^2) : \text{$f(z,w) = - f(w,z)$ for all $z,w \in \D$} \},
\end{equation}
respectively.
We freely use these identifications in what follows.  
\end{Example}

\begin{Definition}[Symmetrization and antisymmetrization operators]
Let $\operatorname{sgn}\pi$ denote the sign of a permutation $\pi \in \Sigma_n$.
Define $\A_n: \h^{\otimes n} \to \h^{\otimes n}$ and $\S_n: \h^{\otimes n}\to\h^{\otimes n}$ by 
\begin{equation}\label{eq:SnAn}
\S_{n}:=\frac{1}{n !} \sum_{\pi \in \Sigma_{n}} \hat{\pi}
\qquad \text{and} \qquad
\A_{n}:=\frac{1}{n !} \sum_{\pi \in \Sigma_{n}}\text{sgn}({\pi}) \hat{\pi}.
\end{equation}
\end{Definition}

\begin{Proposition}\label{Proposition:OrthoProj}\hfill
\begin{enumerate}
\item $\S_n$ is the orthogonal projection from $\h^{\otimes n}$ onto $\h^{\odot n}$.
\item $\A_n$ is the orthogonal projection from $\h^{\otimes n}$ onto $\h^{\wedge n}$.
\end{enumerate}
In particular, $\h^{\odot n}$ and $\h^{\wedge n}$ are closed subspaces of $\h^{\otimes n}$.
\end{Proposition}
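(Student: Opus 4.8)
The plan is to verify directly that $\S_n$ and $\A_n$ are idempotent and self-adjoint, and then to identify their ranges; recall that a bounded operator is an orthogonal projection precisely when it is idempotent and self-adjoint, and that the range of an orthogonal projection is automatically closed. Throughout, the engine of every computation is the elementary observation that for fixed $\tau \in \Sigma_n$ the maps $\pi \mapsto \tau\pi$, $\pi \mapsto \pi\tau$, and $\pi \mapsto \pi^{-1}$ are bijections of $\Sigma_n$, so that reindexing a sum over $\Sigma_n$ by any of these substitutions leaves its value unchanged. I would treat $\S_n$ in full, since the argument for $\A_n$ is identical once the sign factors are tracked using $\operatorname{sgn}(\pi\tau) = \operatorname{sgn}(\pi)\operatorname{sgn}(\tau)$ and $\operatorname{sgn}(\pi^{-1}) = \operatorname{sgn}(\pi)$.

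For idempotency I would expand, invoking part (a) of Proposition \ref{Proposition:Unitary},
\begin{equation*}
\S_n^2 = \frac{1}{(n!)^2}\sum_{\pi \in \Sigma_n}\sum_{\tau \in \Sigma_n} \hat{\pi}\hat{\tau} = \frac{1}{(n!)^2}\sum_{\pi \in \Sigma_n}\sum_{\tau \in \Sigma_n} \hat{\pi\tau}.
\end{equation*}
For each fixed $\pi$, the inner sum runs over all of $\Sigma_n$ as $\tau$ does, hence equals $n!\,\S_n$; summing over the $n!$ choices of $\pi$ yields $\S_n^2 = \S_n$. For self-adjointness, part (b) gives $\hat{\pi}^* = \hat{\pi^{-1}}$, so $\S_n^* = \tfrac{1}{n!}\sum_\pi \hat{\pi^{-1}} = \S_n$ after reindexing by $\pi \mapsto \pi^{-1}$.

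To compute the range, I would first establish the invariance $\hat{\tau}\,\S_n = \S_n$ for every $\tau \in \Sigma_n$: indeed $\hat{\tau}\,\S_n = \tfrac{1}{n!}\sum_\pi \hat{\tau\pi} = \S_n$ by reindexing $\pi \mapsto \tau\pi$. Consequently every $\vec{v}$ in $\ran \S_n$, say $\vec{v} = \S_n \vec{w}$, satisfies $\hat{\tau}\vec{v} = \hat{\tau}\S_n\vec{w} = \S_n\vec{w} = \vec{v}$ for all $\tau$, giving $\ran \S_n \subseteq \h^{\odot n}$. Conversely, if $\hat{\pi}\vec{v} = \vec{v}$ for all $\pi$, then $\S_n \vec{v} = \tfrac{1}{n!}\sum_\pi \hat{\pi}\vec{v} = \vec{v}$, so $\vec{v} \in \ran \S_n$; this yields the reverse inclusion and simultaneously shows that $\S_n$ fixes $\h^{\odot n}$ pointwise. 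The same three computations with the sign-weighted sums show that $\A_n$ is idempotent and self-adjoint, that $\hat{\tau}\,\A_n = \operatorname{sgn}(\tau)\,\A_n$, and that $\ran \A_n = \h^{\wedge n}$.

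There is no genuine obstacle here: the statement is a standard group-averaging (Reynolds operator) argument, and every step reduces to a reindexing of a finite sum over $\Sigma_n$ together with the structural facts of Proposition \ref{Proposition:Unitary}. The only point demanding care is the bookkeeping of signs in the antisymmetric case—in particular the identity $\hat{\tau}\A_n = \operatorname{sgn}(\tau)\A_n$, where the substitution $\pi \mapsto \tau\pi$ introduces the factor $\operatorname{sgn}(\tau^{-1}) = \operatorname{sgn}(\tau)$ that is pulled out front. The closedness of $\h^{\odot n}$ and $\h^{\wedge n}$ asserted in the final sentence then follows for free, as the range of an orthogonal projection is closed.
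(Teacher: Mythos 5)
Your proof is correct and follows essentially the same route the paper sketches: idempotency and self-adjointness of $\S_n$ and $\A_n$ via reindexing finite sums over $\Sigma_n$ using Proposition \ref{Proposition:Unitary}, the identity $\hat{\tau}\S_n = \S_n$ (resp.\ $\hat{\tau}\A_n = \operatorname{sgn}(\tau)\A_n$) to identify the ranges, and closedness for free since ranges of orthogonal projections are closed. One small remark: your range computation for $\A_n$ rightly uses the condition $\hat{\pi}\vec{v} = \operatorname{sgn}(\pi)\vec{v}$, which is the intended reading of the paper's definition of $\h^{\wedge n}$ (as literally written there, demanding $\hat{\pi}\vec{v} = -\vec{v}$ for \emph{all} $\pi$ would force $\vec{v} = \vec{0}$ once $n \geq 3$), so your bookkeeping of signs is the correct one.
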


\begin{proof}
(a) Use Proposition \ref{Proposition:Unitary} and the fact that $\hat{\pi}\S_n = \S_n$ for all $\pi \in \Sigma_n$
to show that $\S_n^2 = \S_n = \S_n^*$ and $\ran \S_n = \h^{\odot n}$.  The proof of (b) is similar.
\end{proof}

Let $\vec{v}_1, \vec{v}_2,\ldots, \vec{v}_n\in \h$ and define the simple \emph{symmetric} and \emph{antisymmetric tensors}
\begin{align*}
\vec{v}_1 \odot \vec{v}_2 \odot \cdots \odot \vec{v}_n
&:=\S_n ( \vec{v}_1 \otimes \vec{v}_2 \otimes \cdots \otimes \vec{v}_n) \quad \text{and}\\
\vec{v}_1 \wedge \vec{v}_2 \wedge \cdots \wedge \vec{v}_n
&:=\A_n ( \vec{v}_1 \otimes \vec{v}_2 \otimes \cdots \otimes \vec{v}_n).
\end{align*}
A factor of $1/\sqrt{n!}$ is included in some sources \cite[(3.8.33)]{SimonReal} and $\vee$ is sometimes used instead of $\odot$.
Note that $\vec{v}_1 \wedge \vec{v}_2 \wedge \cdots \wedge \vec{v}_n = \vec{0}$
if $\vec{v}_i = \vec{v}_j$ for some $i \neq j$.  

\begin{Proposition}\label{Proposition:ONB}
Let $\vec{e}_1, \vec{e}_2, \vec{e}_3,\ldots $ be an orthonormal basis for $\h$. 
\begin{enumerate}
\item $\vec{e}_{i_1} \odot \vec{e}_{i_2} \odot \cdots \odot \vec{e}_{i_n}$ for $1 \leq i_1 \leq i_2 \leq \ldots \leq i_n$
form an orthogonal basis for $\h^{\odot n}$.
\item $\vec{e}_{i_1} \wedge \vec{e}_{i_2} \wedge \cdots \wedge \vec{e}_{i_n}$ for $1 < i_1 < i_2 < \ldots < i_n$
form an orthogonal basis for $\h^{\wedge n}$.
\end{enumerate}
\end{Proposition}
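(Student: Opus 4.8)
The plan is to obtain both orthogonal bases by pushing the standard orthonormal basis of $\h^{\otimes n}$ through the projections $\S_n$ and $\A_n$, and then discarding redundant and zero vectors. Consider the symmetric case (a) first. Since the simple tensors $\vec{e}_{i_1}\otimes\cdots\otimes\vec{e}_{i_n}$ over all $(i_1,\ldots,i_n)\in\N^n$ form a total orthonormal set in $\h^{\otimes n}$, and $\S_n$ is the orthogonal projection onto $\h^{\odot n}$ by Proposition \ref{Proposition:OrthoProj}, their images $\vec{e}_{i_1}\odot\cdots\odot\vec{e}_{i_n}=\S_n(\vec{e}_{i_1}\otimes\cdots\otimes\vec{e}_{i_n})$ form a total set in $\ran\S_n=\h^{\odot n}$; this uses only that a bounded projection carries a total set to a total subset of its range. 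Because $\hat{\pi}\,\S_n=\S_n$ for every $\pi\in\Sigma_n$ (as recorded in the proof of Proposition \ref{Proposition:OrthoProj}), reordering the indices leaves the symmetric tensor unchanged, so I would keep one representative per multiset, namely the weakly increasing tuples $1\leq i_1\leq\cdots\leq i_n$, and these still span $\h^{\odot n}$.

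The heart of the argument is the inner-product computation establishing orthogonality and nonvanishing. Writing $x=\vec{e}_{i_1}\otimes\cdots\otimes\vec{e}_{i_n}$ and $y=\vec{e}_{j_1}\otimes\cdots\otimes\vec{e}_{j_n}$ and using $\S_n^2=\S_n=\S_n^*$, I would compute
\[
\langle \S_n x,\S_n y\rangle=\langle \S_n x, y\rangle=\frac{1}{n!}\sum_{\pi\in\Sigma_n}\prod_{k=1}^n\langle \vec{e}_{i_{\pi(k)}},\vec{e}_{j_k}\rangle=\frac{1}{n!}\,\#\{\pi\in\Sigma_n: i_{\pi(k)}=j_k \text{ for all } k\}.
\]
For weakly increasing tuples no permutation can match two different multisets, so distinct representatives are orthogonal; when the tuples coincide, the permutations counted are precisely those stabilizing the index string, giving $\norm{\vec{e}_{i_1}\odot\cdots\odot\vec{e}_{i_n}}^2=\prod_r m_r!/n!>0$, where $m_1,m_2,\ldots$ are the multiplicities of the distinct values. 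Thus each vector is nonzero and distinct representatives are mutually orthogonal, yielding an orthogonal basis.

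For the antisymmetric case (b) the structure is identical, with $\A_n$ replacing $\S_n$ and the relation $\hat{\pi}\,\A_n=\operatorname{sgn}(\pi)\,\A_n$ (immediate from the definition of $\A_n$ and Proposition \ref{Proposition:Unitary}) in place of $\hat{\pi}\,\S_n=\S_n$. Now a transposition of two equal indices forces the antisymmetric tensor to equal its own negative, so any tuple with a repeated index yields $\vec{0}$, and the surviving representatives are the strictly increasing tuples $1\leq i_1<\cdots<i_n$. The same computation, now carrying the factor $\operatorname{sgn}(\pi)$, gives $\langle \A_n x,\A_n y\rangle=\frac{1}{n!}\sum_{\pi}\operatorname{sgn}(\pi)\prod_{k}\langle\vec{e}_{i_{\pi(k)}},\vec{e}_{j_k}\rangle$: distinct strictly increasing tuples admit no matching permutation and are orthogonal, while for a single strictly increasing tuple only the identity permutation survives, so $\norm{\vec{e}_{i_1}\wedge\cdots\wedge\vec{e}_{i_n}}^2=1/n!$.

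I expect the only real subtlety to be the bookkeeping in the symmetric norm computation, namely correctly identifying the stabilizing permutations and the resulting multiplicity factor $\prod_r m_r!$, together with the standard but worth-stating fact that an orthogonal projection maps a total set onto a total subset of its range. Everything else follows directly from the properties of $\S_n$ and $\A_n$ in Propositions \ref{Proposition:Unitary} and \ref{Proposition:OrthoProj}.
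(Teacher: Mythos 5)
Your proof is correct, but there is nothing in the paper to measure it against: Proposition \ref{Proposition:ONB} is one of the facts the paper states \emph{without proof} (Section \ref{Subsection:TensorSpace} explicitly defers such details to \cite{Bhatia} and \cite{SimonReal}), so your argument supplies a proof where the paper has none. The route you take --- push the orthonormal basis of $\h^{\otimes n}$ through the orthogonal projections $\S_n$ and $\A_n$, observe that an orthogonal projection carries a total set to a total subset of its closed range, discard redundant representatives, and then compute $\langle \S_n x, \S_n y\rangle = \langle \S_n x, y\rangle$ by counting matching permutations --- is the standard one, and it dovetails exactly with what the paper does record around the proposition: your stabilizer count $m_1!m_2!\cdots m_d!$ is precisely the count the paper invokes immediately afterwards to justify $\norm{\vec{e}_{i_1} \odot \vec{e}_{i_2} \odot \cdots \odot \vec{e}_{i_n}} = (m_1!m_2!\cdots m_d!/n!)^{1/2}$, and your value $1/n!$ for the squared norm of a wedge of distinct basis vectors matches the paper's assertion that $\sqrt{2}(\vec{e}_i \wedge \vec{e}_j)$, $i<j$, is orthonormal in $\h^{\wedge 2}$. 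One cosmetic point: the identity you quote, $\hat{\pi}\S_n = \S_n$, is the one recorded in the proof of Proposition \ref{Proposition:OrthoProj}, whereas the step ``reordering the indices leaves the symmetric tensor unchanged'' really uses $\S_n\hat{\pi} = \S_n$; the two are equivalent (reindex the sum over the group, or take adjoints using $\S_n = \S_n^*$ and $\hat{\pi}^* = \hat{\pi^{-1}}$), so this is a matter of wording, not a gap. The same remark applies to $\A_n\hat{\pi} = \operatorname{sgn}(\pi)\A_n$ in part (b), where your observation that a transposition of two equal indices forces $\A_n x = -\A_n x = \vec{0}$ correctly disposes of tuples with repeated indices.
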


We say ``orthogonal'' instead of ``orthonormal'' because the vectors described in the previous
proposition need not be unit vectors.
Let $m_{\ell}$ denote the number of occurrences of $\ell$ in $(i_1,i_2,\ldots,i_n) \in [d]^n$.  Then there are
${m_1! m_2! \cdots m_d!}$ permutations of $\vec{e}_{i_1} \otimes \vec{e}_{i_2} \otimes \cdots \otimes \vec{e}_{i_n}$ that give rise to the same simple tensor. Thus,
\begin{equation*}
\norm{\vec{e}_{i_1} \odot \vec{e}_{i_2} \odot \cdots \odot \vec{e}_{i_n}}
= \bigg(\frac{m_1! m_2! \cdots m_r!} {n!}\bigg)^{1/2}.
\end{equation*}
If $\dim \h = d$ is finite, then (using the notation for binomial coefficients)
\begin{equation*}
\dim \h^{\odot n} =\binom{d+n-1}{n}
\quad \text{and} \quad
\dim \h^{\wedge n} = 
\begin{cases}
\binom{d}{n} & \text{if $n \leq  d$},\\[2pt]
0 & \text{if $n > d$}.
\end{cases}
\end{equation*}

The case $n=2$ is special since 
$\dim \h^{\otimes 2} = d^2 = \binom{d+1}{2} + \binom{d}{2} = \dim \h^{\odot 2} + \dim \h^{\wedge 2}$,
which suggests Proposition \ref{Proposition:TwoSum}.
The simple symmetric and antisymmetric tensors are 
\begin{equation*}
\vec{u}\odot \vec{v}=\tfrac{1}{2}( \vec{u}\otimes \vec{v} + \vec{v} \otimes \vec{u}) \in \h^{\odot 2} \quad \text{and}\quad  
\vec{u}\odot \vec{v}=\tfrac{1}{2}( \vec{u}\otimes \vec{v} - \vec{v} \otimes \vec{u}) \in \h^{\wedge 2} 
\end{equation*}
for $\vec{u}, \vec{v} \in \h$.
If $\vec{e}_1, \vec{e}_2, \vec{e}_3,\ldots $ is an orthonormal basis for $\h$, then
\begin{enumerate}
    \item $\sqrt{2}(\vec{e}_i \odot \vec{e}_j)$ for $i < j$ and $\vec{e}_i \odot \vec{e}_i$ for $i \geq 1$ 
    form an orthonormal basis for $\h^{\odot 2}$, and

    \item $\sqrt{2}(\vec{e}_i \wedge \vec{e}_j)$  for $i < j$ form an orthonormal basis for $\h^{\wedge 2}$.
\end{enumerate}

\begin{Proposition}\label{Proposition:TwoSum}
$\h^{\otimes 2}= \h^{\odot 2} \oplus \h^{\wedge 2}$ is an orthogonal decomposition.
\end{Proposition}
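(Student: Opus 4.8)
The plan is to exploit the fact that $\Sigma_2$ is as small as possible: it consists only of the identity $e$ and the single transposition $\tau$. Substituting this into the definitions \eqref{eq:SnAn} gives $\S_2 = \tfrac{1}{2}(I + \hat{\tau})$ and $\A_2 = \tfrac{1}{2}(I - \hat{\tau})$, where $I$ denotes the identity on $\h^{\otimes 2}$. Adding these two expressions yields $\S_2 + \A_2 = I$, so every $\vec{v} \in \h^{\otimes 2}$ decomposes as $\vec{v} = \S_2 \vec{v} + \A_2 \vec{v}$. Since Proposition \ref{Proposition:OrthoProj} identifies $\ran \S_2 = \h^{\odot 2}$ and $\ran \A_2 = \h^{\wedge 2}$, this already shows $\h^{\otimes 2} = \h^{\odot 2} + \h^{\wedge 2}$, with both summands closed.

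It then remains to verify that the two summands are orthogonal. The cleanest route uses that $\hat{\tau}$ is unitary by Proposition \ref{Proposition:Unitary}(b). Given $\vec{x} \in \h^{\odot 2}$ and $\vec{y} \in \h^{\wedge 2}$, the defining relations are $\hat{\tau}\vec{x} = \vec{x}$ and $\hat{\tau}\vec{y} = -\vec{y}$, so unitarity gives
\begin{equation*}
\langle \vec{x}, \vec{y} \rangle = \langle \hat{\tau}\vec{x}, \hat{\tau}\vec{y} \rangle = \langle \vec{x}, -\vec{y} \rangle = -\langle \vec{x}, \vec{y} \rangle,
\end{equation*}
which forces $\langle \vec{x}, \vec{y} \rangle = 0$. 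Equivalently, one can check that the two projections annihilate one another: since $\tau^2 = e$, Proposition \ref{Proposition:Unitary}(a) gives $\hat{\tau}^2 = \hat{e} = I$, whence $\S_2 \A_2 = \tfrac{1}{4}(I - \hat{\tau}^2) = 0$, and orthogonality of the ranges of two orthogonal projections follows. Combining this with $\S_2 + \A_2 = I$ shows the sum is an orthogonal direct sum, completing the proof.

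There is no serious obstacle here; the statement is essentially a reflection of the special structure of $\Sigma_2$. The only point requiring a moment's care is the identity $\hat{\tau}^2 = I$ (equivalently, the self-adjointness of $\hat{\tau}$), which is exactly what Proposition \ref{Proposition:Unitary} supplies. I would emphasize that this argument is peculiar to $n = 2$: for $n \geq 3$ the symmetric and antisymmetric subspaces no longer span $\h^{\otimes n}$, which is precisely the contrast the surrounding dimension count $d^2 = \binom{d+1}{2} + \binom{d}{2}$ is meant to highlight.
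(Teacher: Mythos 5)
Your proof is correct and follows essentially the same route as the paper's: the paper also writes $\vec{x} = \tfrac{1}{2}(\vec{x}+\hat{\pi}\vec{x}) + \tfrac{1}{2}(\vec{x}-\hat{\pi}\vec{x})$ (i.e.\ $\S_2+\A_2 = I$) to get the sum decomposition, and then uses the unitarity of the transposition together with the eigenvalue relations $\hat{\pi}\vec{u}=\vec{u}$, $\hat{\pi}\vec{v}=-\vec{v}$ to force $\langle\vec{u},\vec{v}\rangle = -\langle\vec{u},\vec{v}\rangle = 0$. The only cosmetic difference is that the paper phrases the orthogonality step via selfadjointness of $\hat{\pi}$ (unitary and involutive) rather than moving $\hat{\tau}$ onto both arguments, which is an equivalent one-line variation.
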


\begin{proof}
Let $\pi$ be the nonidentity permutation in $\Sigma_2$. 
If $\vec{x} \in \h^{\otimes 2}$, then
\begin{equation}\label{orthogonalproj}
    \vec{x}
    =\underbrace{\tfrac{1}{2}(\vec{x}+\hat{\pi}(\vec{x})) }_{\S_2(\vec{x}) \in \h^{\odot 2}}
    +\underbrace{\tfrac{1}{2}(\vec{x}-\hat{\pi}(\vec{x})) }_{\A_2(\vec{x}) \in \h^{\wedge 2}}
\end{equation}
and hence $\h^{\otimes 2}= \h^{\odot 2} + \h^{\wedge 2}$.
Since $\hat{\pi}$ is unitary (Proposition \ref{Proposition:Unitary}) and
involutive (since $\hat{\pi}^2 =I$), it is selfadjoint.  
Let $\vec{u} \in \h^{\odot 2}$ and $\vec{v} \in \h^{ \wedge 2}$, then 
$
\langle \vec{u}, \vec{v}\rangle =\langle \hat{\pi}\vec{u},\vec{v}\rangle 
=\langle \vec{u},\hat{\pi}\vec{v} \rangle =\langle \vec{u},-\vec{v}\rangle =-\langle \vec{u},\vec{v}\rangle$,
so $\langle \vec{u},\vec{v}\rangle =0$ .  Thus,  $\h^{\odot 2}\cap \h^{\wedge 2} =\{ \vec{0} \}$, and $\h^{\odot 2} \perp \h^{\wedge 2}$.
\end{proof}

\begin{Example}
Recall from Example \ref{Example:Hardy} the identification of $H^2(\D) \otimes H^2(\D)$ with $H^2(\D^2)$.
The orthogonal decomposition of Proposition \ref{Proposition:TwoSum} becomes
\begin{equation}\label{eq:H2DirectSum}
H^2(\D^2) = H^2_{\operatorname{sym}}(\D^2) \oplus H^2_{\operatorname{asym}}(\D^2),
\end{equation}
where the direct summands are defined by \eqref{eq:H2Sym} and \eqref{eq:H2Anti}, respectively.
In this context, $z^i w^i$ and $( z^i w^j + z^j w^i)/\sqrt{2}$ for $0 \leq i < j$ 
form an orthonormal basis for $H^2_{\operatorname{sym}}(\D^2)$ and 
$(z^i w^j - z^j w^i)/\sqrt{2}$ for $i < j$ form an orthonormal basis for $H^2_{\operatorname{asym}}(\D^2)$.
\end{Example}

\begin{Lemma}\label{Lemma:SquareSumConverge}
If $\sum_{i \leq j} | a_{ij}|^2 < \infty$, then $\sum_{i \leq j} a_{ij} \vec{e}_i \odot \vec{e}_j \in \h \odot \h $.
\end{Lemma}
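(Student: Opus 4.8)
The plan is to recognize the sum as an orthogonal series in the Hilbert space $\h \odot \h$ and invoke the standard Riesz--Fischer convergence criterion. First I would recall from Proposition \ref{Proposition:ONB} that the vectors $\vec{e}_i \odot \vec{e}_j$ for $i \leq j$ form an orthogonal basis for $\h \odot \h$; in particular they are pairwise orthogonal. The norm formula preceding Proposition \ref{Proposition:TwoSum} gives $\norm{\vec{e}_i \odot \vec{e}_i} = 1$ and $\norm{\vec{e}_i \odot \vec{e}_j} = 1/\sqrt{2}$ for $i < j$, so in all cases $\norm{\vec{e}_i \odot \vec{e}_j} \leq 1$.

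Next I would estimate the norm of a finite partial sum. Fixing any enumeration of the countable index set $\{(i,j) : i \leq j\}$ and letting $F$ be a finite subset, the Pythagorean theorem (valid because the summands are orthogonal) yields
\begin{equation*}
\Big\| \sum_{(i,j) \in F} a_{ij}\, \vec{e}_i \odot \vec{e}_j \Big\|^2 = \sum_{(i,j) \in F} |a_{ij}|^2 \, \norm{\vec{e}_i \odot \vec{e}_j}^2 \leq \sum_{(i,j) \in F} |a_{ij}|^2 .
\end{equation*}
Because $\sum_{i \leq j} |a_{ij}|^2 < \infty$, the right-hand side becomes arbitrarily small once $F$ avoids a sufficiently large finite set of indices, so the partial sums form a Cauchy sequence.

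Finally, $\h \odot \h$ is a closed subspace of the Hilbert space $\h^{\otimes 2}$ (Proposition \ref{Proposition:OrthoProj}), hence complete, so the Cauchy sequence of partial sums converges to a limit lying in $\h \odot \h$. Since the summands are orthogonal, the value of the limit is independent of the chosen enumeration, so the series in fact converges unconditionally. I do not anticipate a genuine obstacle here: the only points requiring care are the bookkeeping of the double index $i \leq j$ as a single countable orthogonal system and the observation that the basis vectors have norm at most $1$, which makes the comparison with $\sum_{i \leq j} |a_{ij}|^2$ immediate.
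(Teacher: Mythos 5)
Your proof is correct and follows essentially the same route as the paper's: the paper's one-line argument uses Proposition \ref{Proposition:ONB} to expand the sum against the orthonormal vectors $\sqrt{2}\,\vec{e}_i \odot \vec{e}_j$ ($i<j$) and $\vec{e}_i \odot \vec{e}_i$, bounding the norm squared by $\sum_{i\leq j}|a_{ij}|^2$, which is precisely your Pythagorean/Riesz--Fischer argument with the Cauchy-sequence and completeness details left implicit. Your version simply makes explicit what the paper compresses.
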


\begin{proof}
Proposition \eqref{Proposition:ONB} ensures that
\begin{equation*}
\Big\|\sum_{i \leq j} a_{ij} \vec{e}_i \odot \vec{e}_j\Big\|^2 
= \Big\|\sum_{i < j} \frac{a_{ij}}{\sqrt{2}} \sqrt{2} \vec{e}_i \odot \vec{e}_j + \sum_{i = 1}^{\infty} a_{ii} \vec{e}_i \odot \vec{e}_i \Big\|^2
\leq  \sum_{i \leq j} |a_{ij}|^2  < \infty. \qedhere
\end{equation*}
\end{proof}

\begin{Lemma}\label{Lemma:VectorBounds}
For $\vec{u}, \vec{v} \in \h$, we have
$\frac{1}{\sqrt{2}}\| \vec{u} \| \| \vec{v} \| \leq \| \vec{u} \odot \vec{v} \| \leq \| \vec{u} \| \| \vec{v} \|$;
both inequalities are sharp.  In particular, the symmetric tensor product of two nonzero vectors is nonzero.
\end{Lemma}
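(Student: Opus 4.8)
The plan is to reduce everything to one explicit norm computation followed by the Cauchy--Schwarz inequality. Starting from the formula $\vec{u} \odot \vec{v} = \tfrac{1}{2}(\vec{u}\otimes\vec{v} + \vec{v}\otimes\vec{u})$ recorded just before Proposition \ref{Proposition:TwoSum}, I would expand $\|\vec{u}\odot\vec{v}\|^2$ as the inner product of this sum with itself, producing four terms. Each term is evaluated using \eqref{eq:InnerProductMultiply}: the two diagonal terms each equal $\|\vec{u}\|^2\|\vec{v}\|^2$, while the two cross terms each equal $\langle\vec{u},\vec{v}\rangle\langle\vec{v},\vec{u}\rangle = |\langle\vec{u},\vec{v}\rangle|^2$. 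The one point to watch is the conjugate-linearity of the inner product in its second argument, which is exactly what turns the cross terms into $|\langle\vec{u},\vec{v}\rangle|^2$ rather than $\langle\vec{u},\vec{v}\rangle^2$.

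Collecting terms yields the clean identity
\begin{equation*}
\|\vec{u}\odot\vec{v}\|^2 = \tfrac{1}{2}\big(\|\vec{u}\|^2\|\vec{v}\|^2 + |\langle\vec{u},\vec{v}\rangle|^2\big).
\end{equation*}
Both inequalities then follow at once: dropping the nonnegative term $|\langle\vec{u},\vec{v}\rangle|^2$ gives the lower bound $\|\vec{u}\odot\vec{v}\|^2 \geq \tfrac{1}{2}\|\vec{u}\|^2\|\vec{v}\|^2$, while Cauchy--Schwarz, $|\langle\vec{u},\vec{v}\rangle|^2 \leq \|\vec{u}\|^2\|\vec{v}\|^2$, gives the upper bound $\|\vec{u}\odot\vec{v}\|^2 \leq \|\vec{u}\|^2\|\vec{v}\|^2$. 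Taking square roots recovers the stated bounds.

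For sharpness I would exhibit the two extreme cases directly from the identity. Choosing $\vec{u},\vec{v}$ orthogonal (for instance two distinct vectors from an orthonormal basis) makes $\langle\vec{u},\vec{v}\rangle = 0$ and attains the lower bound, while choosing $\vec{u} = \vec{v} \neq \vec{0}$ makes $|\langle\vec{u},\vec{v}\rangle| = \|\vec{u}\|\|\vec{v}\|$ and attains the upper bound. Finally, the ``in particular'' claim is immediate: if $\vec{u}$ and $\vec{v}$ are both nonzero then $\|\vec{u}\|\|\vec{v}\| > 0$, so the lower bound forces $\|\vec{u}\odot\vec{v}\| > 0$, whence $\vec{u}\odot\vec{v}\neq\vec{0}$.

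Honestly, there is no serious obstacle here; the lemma is essentially a one-line expansion once one has the symmetrizer formula. The only place to be careful is the bookkeeping of the four inner-product terms and the conjugate-linearity convention, after which Cauchy--Schwarz does all the real work.
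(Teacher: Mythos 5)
Your proof is correct and follows essentially the same route as the paper: both expand $\|\vec{u}\odot\vec{v}\|^2 = \tfrac{1}{4}\|\vec{u}\otimes\vec{v}+\vec{v}\otimes\vec{u}\|^2$ to obtain the identity $\tfrac{1}{2}(\|\vec{u}\|^2\|\vec{v}\|^2 + |\langle\vec{u},\vec{v}\rangle|^2)$, then get the lower bound by dropping the cross term, the upper bound by Cauchy--Schwarz, and sharpness from $\vec{u}=\vec{v}$ and $\vec{u}\perp\vec{v}$. No discrepancies to report.
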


\begin{proof}
The Cauchy--Schwarz inequality provides the upper inequality since
\begin{align}
    \| \vec{u} \odot \vec{v} \|^2  
    &= \tfrac{1}{4} \| \vec{u} \otimes \vec{v} + \vec{v} \otimes \vec{u} \|^2 
    = \tfrac{1}{4} \langle \vec{u} \otimes \vec{v} + \vec{v} \otimes \vec{u}, \, \vec{u} \otimes \vec{v} + \vec{v} \otimes \vec{u} \rangle \\
    &= \tfrac{1}{4} ( \norm{ \vec{u} \otimes \vec{v} }^2 + \norm{\vec{v} \otimes \vec{u}}^2+ |\langle \vec{u} \otimes \vec{v}, \vec{v} \otimes \vec{u}\rangle |^2) \\
    &= \tfrac{1}{4} (2\| \vec{u} \|^2 \| \vec{v} \|^2 + 2 |\langle \vec{u}, \vec{v} \rangle|^2 ) \label{eq:MidEqThing} \\
    & \leq  \tfrac{1}{4} (2\| \vec{u} \|^2 \| \vec{v} \|^2 + 2\| \vec{u} \|^2 \| \vec{v} \|^2) \\
    &= \|\vec{u} \|^2 \|\vec{v} \|^2.
\end{align}
In \eqref{eq:MidEqThing}, $|\langle \vec{u}, \vec{v} \rangle|^2$ is nonnegative, so we obtain the lower inequality.
The upper inequality is sharp if $\vec{u} = \vec{v}$ and the lower inequality is sharp if $\vec{u} \perp \vec{v}$.
\end{proof}

\section{Symmetric and antisymmetric tensor products of operators}\label{Section:TensorOperators}
For $A_1, A_2, \ldots, A_n \in \B(\h)$, define $A_1 \otimes A_2 \otimes \dots \otimes  A_n$ on simple tensors by
\begin{equation*}
    (A_1 \otimes A_2 \otimes \dots \otimes  A_n) (\vec{v}_1 \otimes \vec{v}_2 \otimes \cdots \otimes \vec{v}_n) 
    = A_1 \vec{v}_1 \otimes A_2 \vec{v}_2  \otimes  \cdots \otimes  A_n \vec{v}_n.
\end{equation*}
This extends by linearity to the linear span $\h^{ \hat{\otimes}n}$ of the simple tensors.
The density of $\h^{\hat{\otimes}n}$ in $\h^{\otimes n}$ ensures that
$A_1 \otimes A_2 \otimes \dots \otimes  A_n$ has a unique bounded extension to $\h^{\otimes n}$,
also denoted $A_1 \otimes A_2 \otimes \dots \otimes  A_n$, which satisfies \cite[(3.8.17)]{SimonReal}:
\begin{equation}\label{eq:BoundedExtend}
    \| A_1 \otimes A_2 \otimes \dots \otimes  A_n \| =  \| A_1 \| \| A_2 \| \cdots \|A_n \|.
\end{equation}
We may write $A^{\otimes n}$ instead of $A \otimes A \otimes \cdots \otimes A$ 
($n$ times).

\begin{Proposition}\label{Proposition:Invariant}
Let $A_1, A_2,  \ldots, A_n \in \B(\h)$.  Then $\h^{\odot n}$ and $\h^{\wedge n}$ are invariant under
\begin{equation*}
\S_n(A_1,A_2,\ldots,A_n) = 
    \frac{1}{n!}\sum_{\pi\in \Sigma_n} (A_{\pi(1)} \otimes A_{\pi(2)} \otimes \dots \otimes A_{\pi(n)}) \in \B(\h^{\otimes n}).
\end{equation*}
\end{Proposition}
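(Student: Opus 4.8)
The plan is to show that $B := \S_n(A_1,\ldots,A_n)$ commutes with every $\hat{\sigma}$, $\sigma \in \Sigma_n$. Once this is established, invariance of both subspaces is immediate: $\h^{\odot n}$ and $\h^{\wedge n}$ are precisely the sets of $\vec{v} \in \h^{\otimes n}$ with $\hat{\sigma}(\vec{v}) = \vec{v}$ and $\hat{\sigma}(\vec{v}) = \operatorname{sgn}(\sigma)\vec{v}$, respectively, for all $\sigma$. So if $\hat{\sigma}(\vec{v}) = \pm \vec{v}$ for every $\sigma$, then $\hat{\sigma}(B\vec{v}) = B\hat{\sigma}(\vec{v}) = \pm B\vec{v}$, and $B\vec{v}$ lies in the same subspace.

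First I would establish, on simple tensors, the intertwining relation $\hat{\sigma}\,(A_{\pi(1)} \otimes \cdots \otimes A_{\pi(n)}) = (A_{(\pi\sigma)(1)} \otimes \cdots \otimes A_{(\pi\sigma)(n)})\,\hat{\sigma}$. Applying the left-hand side to $\vec{v}_1 \otimes \cdots \otimes \vec{v}_n$, the operator $A_{\pi(1)} \otimes \cdots \otimes A_{\pi(n)}$ produces $A_{\pi(1)}\vec{v}_1 \otimes \cdots \otimes A_{\pi(n)}\vec{v}_n$, and then \eqref{eq:PiSimple} permutes the slots, yielding $A_{(\pi\sigma)(1)}\vec{v}_{\sigma(1)} \otimes \cdots \otimes A_{(\pi\sigma)(n)}\vec{v}_{\sigma(n)}$; the right-hand side gives the same result, since $\hat{\sigma}$ first produces $\vec{v}_{\sigma(1)} \otimes \cdots \otimes \vec{v}_{\sigma(n)}$. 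Because the simple tensors span a dense subspace of $\h^{\otimes n}$ and all operators involved are bounded, this identity extends to all of $\h^{\otimes n}$.

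Second, I would sum the intertwining relation over $\pi \in \Sigma_n$. For fixed $\sigma$ the map $\pi \mapsto \pi\sigma$ is a bijection of $\Sigma_n$, so reindexing the sum gives $\sum_{\pi \in \Sigma_n} (A_{(\pi\sigma)(1)} \otimes \cdots \otimes A_{(\pi\sigma)(n)}) = \sum_{\rho \in \Sigma_n} (A_{\rho(1)} \otimes \cdots \otimes A_{\rho(n)}) = n!\,B$. Dividing by $n!$ yields $\hat{\sigma}\,B = B\,\hat{\sigma}$ for every $\sigma \in \Sigma_n$. Taking the (signed) average over $\sigma$ then shows $B$ commutes with both $\S_n$ and $\A_n$ as defined in \eqref{eq:SnAn}; by Proposition \ref{Proposition:OrthoProj} these are the orthogonal projections onto $\h^{\odot n}$ and $\h^{\wedge n}$, which confirms the invariance.

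The main point requiring care is the order of composition: one must verify that it is $\pi\sigma$ (not $\sigma\pi$) that appears in the intertwining identity, and correspondingly that the reindexing $\pi \mapsto \pi\sigma$ is the bijection that restores the full symmetric sum. The rest is routine bookkeeping once the commutation relation is in hand.
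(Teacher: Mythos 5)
Your proof is correct, but it takes a genuinely different route from the paper's. The paper argues directly on simple symmetric tensors: writing $\vec{v}_1 \odot \cdots \odot \vec{v}_n = \frac{1}{n!}\sum_{\tau} \vec{v}_{\tau(1)} \otimes \cdots \otimes \vec{v}_{\tau(n)}$, it expands the resulting double sum over $\Sigma_n \times \Sigma_n$ and reindexes to get the explicit formula $T(\vec{v}_1 \odot \cdots \odot \vec{v}_n) = \frac{1}{n!}\sum_{\sigma} A_{\sigma(1)}\vec{v}_1 \odot \cdots \odot A_{\sigma(n)}\vec{v}_n$, then invokes density of the simple symmetric tensors in $\h^{\odot n}$ (and asserts the antisymmetric case is similar). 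You instead prove the commutation relation $\hat{\sigma}\,\S_n(A_1,\ldots,A_n) = \S_n(A_1,\ldots,A_n)\,\hat{\sigma}$ for every $\sigma \in \Sigma_n$ --- your intertwining identity and the right-translation reindexing $\pi \mapsto \pi\sigma$ are both correct as stated --- and then read off invariance from the characterization of $\h^{\odot n}$ and $\h^{\wedge n}$ as joint eigenspaces of the unitaries $\hat{\sigma}$. Your approach buys uniformity (both subspaces are handled simultaneously, with no signs to track separately in the antisymmetric case) and a strictly stronger structural fact: since $\S_n(A_1,\ldots,A_n)$ commutes with every $\hat{\sigma}$, it commutes with the whole group algebra they generate, so \emph{every} isotypic component of the $\Sigma_n$-action on $\h^{\otimes n}$ is invariant, not only the symmetric and antisymmetric ones. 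The paper's approach buys an explicit, reusable formula for the action of $A_1 \odot \cdots \odot A_n$ on simple symmetric tensors, which later sections (e.g., the diagonal-operator computations) actually use. One small remark in your favor: your eigenspace description of $\h^{\wedge n}$ uses $\operatorname{sgn}(\sigma)$, which is the standard and clearly intended definition; the paper's displayed definition, read literally (every $\hat{\pi}$ acts as $-I$), would force $\h^{\wedge n} = \{\vec{0}\}$ for $n \geq 3$, so the signed version is the right one to work with.
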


\begin{proof}
Let $T = \S_n(A_1,A_2,\ldots,A_n)$.
For $\vec{v}_1, \vec{v}_2, \ldots, \vec{v}_n \in \h$,
\begin{align}
&T ( \vec{v}_1 \odot \vec{v}_2 \odot \cdots \odot \vec{v}_n ) 
= \frac{1}{n!}\sum_{\pi\in \Sigma_n}(A_{\pi(1)}  \otimes A_{\pi(2)}  \dots \otimes A_{\pi(n)})  (\vec{v}_1 \odot \vec{v}_2 \odot  \cdots \odot \vec{v}_n) \\
&\qquad= \frac{1}{n!}\sum_{\pi\in \Sigma_n}(A_{\pi(1)}  \otimes A_{\pi(2)} \otimes \dots \otimes A_{\pi(n)}) 
\bigg( \frac{1}{n !} \sum_{\tau \in \Sigma_{n}} \vec{v}_{\tau(1)}  \otimes \vec{v}_{\tau(2)} \otimes \cdots \otimes \vec{v}_{\tau(n)} \bigg) \\[-5pt]
&\qquad= \frac{1}{(n!)^2}\sum_{\pi\in \Sigma_n} \sum_{\tau \in \Sigma_{n}} (A_{\pi(1)}\vec{v}_{\tau(1)}  \otimes A_{\pi(2)}\vec{v}_{\tau(2)} \otimes  \dots \otimes A_{\pi(n)}\vec{v}_{\tau(n)}) \\
&\qquad= \frac{1}{n!}\sum_{\sigma\in \Sigma_n} A_{\sigma(1)}\vec{v}_{1}  \odot A_{\sigma(2)}\vec{v}_{2} \odot \dots \odot A_{\sigma(n)}\vec{v}_{n} ,
\end{align}
a sum of elements in $\h^{\odot n}$.  The density of the simple symmetric tensors in 
$\h^{\odot n}$ ensures that $T \h^{\odot n} \subseteq \h^{\odot n}$. 
The proof that $T \h^{\wedge n} \subseteq \h^{\wedge n}$ is similar. 
\end{proof}

The proposition above suggests the following definition.

\begin{Definition}[Symmetric tensor products of operators]
Let $A_1, A_2, \ldots, A_n \in \B(\h)$.  Then $A_1 \odot A_2 \odot \cdots \odot A_n$
and $A_1 \wedge A_2 \wedge \ldots \wedge A_n$ are the restrictions of 
\begin{equation*}
\S_n(A_1,A_2,\ldots,A_n) = \frac{1}{n!}\sum_{\pi\in \Sigma_n} (A_{\pi(1)} \otimes A_{\pi(2)} \otimes \dots \otimes A_{\pi(n)})
\end{equation*}
to $\h^{\odot n}$ and $\h^{\wedge n}$, respectively.
We may write $A^{\odot n}$ and $A^{\wedge n}$ instead of $A \odot A \odot \cdots \odot A$ 
($n$ times) and $A \wedge A \wedge \cdots \wedge A$ ($n$ times), respectively.
\end{Definition}

Symmetric tensor products are permutation invariant:
\begin{equation}\label{eq:PermInvariance}
A_1 \odot A_2 \odot \cdots \odot  A_n
= A_{\pi(1)} \odot A_{\pi(2)} \odot \cdots \odot  A_{\pi(n)}
\quad \text{for all $\pi \in \Sigma_n$}.
\end{equation}
If $A, B , C \in \B(\h)$, then the domain of $A \odot B$ is $\h \odot \h$, which is not equal to $\h$, the domain of $C$. 
Thus, $( A \odot B ) \odot C$ is not well defined.
Note that $I \odot I \odot \cdots \odot I = I$.

\begin{Proposition}\label{Proposition:BasicNormInequality}
(a) For all $A_1,  A_2,  \ldots , A_n \in \B (\h)$, we have
$\| A_1 \odot A_2 \odot \cdots \odot A_n \| \leq \|A_1\| \|A_2 \| \cdots \| A_n \|$.
(b) For all $A \in \B(\h)$, we have $\| A^{\odot n} \| = \| A \|^n$.
\end{Proposition}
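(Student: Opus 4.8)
The plan is to prove the two parts separately. For part (a), the key observation is that $A_1 \odot A_2 \odot \cdots \odot A_n$ is by definition the restriction to the invariant subspace $\h^{\odot n}$ of the operator $\S_n(A_1,\dots,A_n) = \frac{1}{n!}\sum_{\pi} A_{\pi(1)} \otimes \cdots \otimes A_{\pi(n)}$ on $\h^{\otimes n}$. Restricting an operator to an invariant subspace can only decrease (or preserve) the norm, so $\|A_1 \odot \cdots \odot A_n\| \leq \|\S_n(A_1,\dots,A_n)\|$. I would then bound the latter by the triangle inequality: each summand $A_{\pi(1)} \otimes \cdots \otimes A_{\pi(n)}$ has norm $\|A_{\pi(1)}\| \cdots \|A_{\pi(n)}\| = \|A_1\| \cdots \|A_n\|$ by \eqref{eq:BoundedExtend} (the full-tensor-product norm formula), and there are $n!$ terms each scaled by $1/n!$, so the sum also has norm at most $\|A_1\| \cdots \|A_n\|$. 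This gives part (a) immediately.

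For part (b), the upper bound $\|A^{\odot n}\| \leq \|A\|^n$ is the special case $A_1 = \cdots = A_n = A$ of part (a), so only the reverse inequality $\|A^{\odot n}\| \geq \|A\|^n$ requires work. The natural strategy is to exhibit approximately norm-attaining symmetric tensors. First I would recall that $\|A\|^n = \|A^{\otimes n}\|$ by \eqref{eq:BoundedExtend}, so it suffices to show that $A^{\odot n}$ achieves, or approaches, this value. The idea is to use rank-one symmetric tensors of the form $\vec{u}^{\odot n} = \vec{u} \odot \vec{u} \odot \cdots \odot \vec{u}$: on such a vector the symmetrization becomes trivial, since all $n!$ permutations of $A\vec{u} \otimes \cdots \otimes A\vec{u}$ coincide. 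Concretely, $A^{\odot n}(\vec{u}^{\odot n}) = (A\vec{u})^{\odot n}$, and because $\vec{u}^{\odot n}$ corresponds to a single (unpermuted) simple tensor its norm is simply $\|\vec{u}\|^n$, with $\|(A\vec{u})^{\odot n}\| = \|A\vec{u}\|^n$ likewise. Hence taking a sequence of unit vectors $\vec{u}_k$ with $\|A\vec{u}_k\| \to \|A\|$ yields $\|A^{\odot n}(\vec{u}_k^{\odot n})\| / \|\vec{u}_k^{\odot n}\| = \|A\vec{u}_k\|^n \to \|A\|^n$, forcing $\|A^{\odot n}\| \geq \|A\|^n$.

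The step I expect to require the most care is verifying that for a pure power $\vec{u}^{\odot n}$ one genuinely has $\|\vec{u}^{\odot n}\| = \|\vec{u}\|^n$ and $A^{\odot n}(\vec{u}^{\odot n}) = (A\vec{u})^{\odot n}$, rather than picking up a combinatorial factor from the $1/n!$ normalization in $\S_n$. The cleanest way to see this is to note that for a single vector, $\S_n(\vec{u} \otimes \cdots \otimes \vec{u}) = \vec{u} \otimes \cdots \otimes \vec{u}$ since every $\hat{\pi}$ fixes it; thus $\vec{u}^{\odot n}$ is literally the simple tensor $\vec{u}^{\otimes n}$, whose norm is $\|\vec{u}\|^n$ by the multiplicativity of the tensor norm, and $A^{\odot n}$ acts on it as $A^{\otimes n}$ does, sending it to $(A\vec{u})^{\otimes n} = (A\vec{u})^{\odot n}$. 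Once this identification is in hand the inequality follows at once, and combined with part (a) it gives the equality $\|A^{\odot n}\| = \|A\|^n$. An alternative to the limiting argument, should $A$ attain its norm, is simply to take a norm-attaining unit vector directly; the sequential version handles the general case.
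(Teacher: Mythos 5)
Your proposal is correct and follows essentially the same route as the paper: part (a) by restricting $\S_n(A_1,\dots,A_n)$ to the invariant subspace and applying the triangle inequality with the multiplicative tensor norm, and part (b) by evaluating $A^{\odot n}$ on pure powers $\vec{v}^{\otimes n}$ of unit vectors, which lie in $\h^{\odot n}$ and on which the symmetrization acts trivially. The paper phrases the lower bound as a supremum over unit vectors rather than via a sequence, but this is the identical argument.
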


\begin{proof}
(a) Since $ A_1 \odot A_2 \odot \cdots \odot A_n $ is the restriction of 
$ \frac{1}{n!}\sum_{\pi\in \Sigma_n}(A_{\pi(1)}  \otimes A_{\pi(2)} \otimes \dots \otimes A_{\pi(n)}) $ to $\h^{\odot n}$,
its norm is at most
\begin{align}
     \Big\| \frac{1}{n!}\sum_{\pi\in \Sigma_n}(A_{\pi(1)}   \otimes \dots \otimes A_{\pi(n)}) \Big\| 
    &\leq \frac{1}{n!}\sum_{\pi\in \Sigma_n}\|(A_{\pi(1)}  \otimes A_{\pi(2)} \otimes \dots \otimes A_{\pi(n)}) \| \\
    &= \frac{1}{n!}\sum_{\pi\in \Sigma_n}\|A_{\pi(1)}  \| \| A_{\pi(2)} \|  \cdots \| A_{\pi(n)} \| \\ 
    &= \frac{1}{n!}\sum_{\pi\in \Sigma_n}\|A_{1}  \| \| A_{2} \|  \dots \| A_{n} \| \\
    &= \|A_1\| \|A_2 \| \ldots \| A_n \|.
\end{align}

\noindent(b)  First we have $\| A^{\odot n} \| \leq \| A \|^n$ from (a).  Then
\begin{equation*}
    \|A \|^n 
    = \sup_{ \substack{ \vec{v} \in \h \\ \| \vec{v}\| = 1}} \|A\vec{v} \|^n 
    = \sup_{ \substack{ \vec{v} \in \h \\ \| \vec{v}\| = 1}} \| A^{\otimes n} (\vec{v} \otimes \vec{v} \otimes \cdots \otimes \vec{v}) \| 
    \leq \| A^{\odot n} \|. \qedhere
\end{equation*}
\end{proof}

\begin{Example}
If $A,B \in \B(\h)$, then Propositions  \ref{Proposition:TwoSum} and \ref{Proposition:Invariant} ensure that
\begin{equation}\label{eq:UseLots}
    \frac{1}{2}( A \otimes B + B \otimes A) 
    = 
\begin{bmatrix}
A \odot B & \0 \\
\0 & A \wedge B \\
\end{bmatrix} : 
\begin{bmatrix}
\h \odot \h \\
\h \wedge \h
\end{bmatrix} \to 
\begin{bmatrix}
\h \odot \h \\
\h \wedge \h
\end{bmatrix}.
\end{equation}
\end{Example}

\begin{Example}
Let $\h = H^2( \D)$ and let $T_g: H^2(\D) \to H^2( \D)$ be the Toeplitz operator with symbol $g \in L^{\infty}(\D)$. 
Then \eqref{eq:BoundedExtend} ensures that $T_{g} \otimes T_{g} : H^2(\D^2) \to H^2(\D^2)$, the linear extension
of the map $z^i w^j \mapsto T_{g}(z^i) T_{g}(w^j)$, has norm $\|g\|_{\infty}^2$. 
Proposition \ref{Proposition:BasicNormInequality} says that $T_{g} \odot T_{g}$, the restriction of $T_{g} \otimes T_{g}$ to $H^2_{\operatorname{sym}}(\T^2)$, also has norm $\|g\|_{\infty}^2$.
\end{Example}

\section{Basic properties}\label{Section:Basic}
In this section we collect some results on the operator-theoretic properties of symmetric tensor products of bounded Hilbert-space operators.

\begin{Lemma}\label{prodsym}
$(A \odot B)(C\odot D)=\frac{1}{2}(AC\odot BD+AD\odot BC)$
for $A,B,C,D \in \B(\mathcal{\h})$.
\end{Lemma}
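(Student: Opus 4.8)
The plan is to verify the identity on simple symmetric tensors and then extend by density. Both sides of the claimed equation are bounded operators on $\h \odot \h$: each is the restriction to $\h \odot \h$ of a finite combination of tensor-product operators, hence bounded by \eqref{eq:BoundedExtend}. Since the simple symmetric tensors $\vec{u} \odot \vec{v}$ with $\vec{u}, \vec{v} \in \h$ span a dense subspace of $\h \odot \h$, it suffices to show that the two sides agree when applied to an arbitrary $\vec{u} \odot \vec{v}$.

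The key computational tool is the action of a symmetric tensor product of operators on a simple symmetric tensor. Specializing the formula established in the proof of Proposition \ref{Proposition:Invariant} to $n = 2$, for any $X, Y \in \B(\h)$ and $\vec{p}, \vec{q} \in \h$ one has
\begin{equation*}
(X \odot Y)(\vec{p} \odot \vec{q}) = \tfrac{1}{2}\big(X\vec{p} \odot Y\vec{q} + Y\vec{p} \odot X\vec{q}\big).
\end{equation*}

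First I would apply this with $X = C$ and $Y = D$ to get $(C \odot D)(\vec{u} \odot \vec{v}) = \tfrac{1}{2}(C\vec{u} \odot D\vec{v} + D\vec{u} \odot C\vec{v})$. Then I would apply $A \odot B$ to each of the two resulting simple symmetric tensors using the same formula, obtaining the four-term expansion
\begin{equation*}
(A \odot B)(C \odot D)(\vec{u} \odot \vec{v}) = \tfrac{1}{4}\big(AC\vec{u} \odot BD\vec{v} + BC\vec{u} \odot AD\vec{v} + AD\vec{u} \odot BC\vec{v} + BD\vec{u} \odot AC\vec{v}\big).
\end{equation*}
Separately, expanding the right-hand side $\tfrac{1}{2}(AC \odot BD + AD \odot BC)$ applied to $\vec{u} \odot \vec{v}$ via the same formula produces the same four simple symmetric tensors: the summand $AC \odot BD$ contributes $AC\vec{u} \odot BD\vec{v}$ and $BD\vec{u} \odot AC\vec{v}$, while $AD \odot BC$ contributes $AD\vec{u} \odot BC\vec{v}$ and $BC\vec{u} \odot AD\vec{v}$, each carrying the factor $\tfrac{1}{4}$. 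Comparing the two expansions term by term completes the verification on simple symmetric tensors, and density then yields the identity on all of $\h \odot \h$.

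The calculation presents no conceptual difficulty; the only thing to watch is the bookkeeping of the four cross terms, together with the (implicit) commutativity $\vec{x} \odot \vec{y} = \vec{y} \odot \vec{x}$ coming from permutation invariance \eqref{eq:PermInvariance}, which guarantees that the term-by-term correspondence between the two expansions is exact rather than merely a match after reindexing. In short, the main obstacle is purely clerical: keeping the four products $AC, AD, BC, BD$ and their placements straight.
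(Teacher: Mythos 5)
Your proof is correct. It produces the same four-term expansion as the paper, but by a different mechanism: you verify the identity vector-by-vector on simple symmetric tensors, using the $n=2$ case of the action formula extracted from the proof of Proposition \ref{Proposition:Invariant}, and then conclude by density of those tensors in $\h \odot \h$ together with boundedness of both sides. The paper instead works entirely at the operator level on $\h \otimes \h$: it multiplies out
\begin{equation*}
\tfrac{1}{2}(A\otimes B+B\otimes A)\,\tfrac{1}{2}(C\otimes D+D\otimes C)
= \tfrac{1}{4}(AC \otimes BD+ BD \otimes AC+ AD \otimes BC+ BC \otimes AD)
\end{equation*}
and restricts to $\h \odot \h$, where the right-hand side is, after grouping the four terms into two symmetrizations, by definition $\tfrac{1}{2}(AC\odot BD+AD\odot BC)$; the legitimacy of restricting the product (rather than each factor separately) rests on the invariance of $\h\odot\h$ under $\S_2(C,D)$, i.e.\ Proposition \ref{Proposition:Invariant} again. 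The paper's route is shorter because it never touches individual vectors and needs no density argument; yours is more explicit at the vector level and makes the cross-term bookkeeping visible, at the cost of the extra density-and-boundedness scaffolding. One small remark: the commutativity $\vec{x}\odot\vec{y}=\vec{y}\odot\vec{x}$ you invoke at the end is not actually needed, since the four terms in your two expansions already coincide literally, not merely up to reindexing. Both arguments are complete.
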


\begin{proof}
Restrict
$\frac{1}{2} (A\otimes B+B\otimes A ) \frac{1}{2} (C\otimes D+D\otimes C)
= \frac{1}{4}(AC \otimes BD+ BD \otimes AC+ AD \otimes BC+ BC \otimes AD)$
to $\h \odot \h$ and obtain the desired formula.
\end{proof}

\begin{Example}
Equip $\C^2$ with the standard basis $\vec{e}_1,\vec{e}_2$ and consider
\begin{equation}\label{eq:A2x2}
A = \big[\begin{smallmatrix}
a_{11} & a_{12} \\
a_{21} & a_{22} 
\end{smallmatrix}\big] \quad \text{and} \quad B = \big[\begin{smallmatrix}
b_{11} & b_{12} \\
b_{21} & b_{22} 
\end{smallmatrix}\big].
\end{equation}
With respect to the orthonormal basis
$\vec{e}_1 \otimes \vec{e}_1,
\vec{e}_1 \otimes \vec{e}_2,
\vec{e}_2 \otimes \vec{e}_1,
\vec{e}_2 \otimes \vec{e}_2$ of $\h\otimes \h$,
we see that $\frac{1}{2} (A \otimes B+B \otimes A)$ has the matrix representation
\begin{equation}\label{eq:2x2Tensor}
\frac{1}{2}\small
\begin{bmatrix}
2 a_{11} b_{11} & a_{11} b_{12} + b_{11} a_{12} & a_{12} b_{11} + b_{12} a_{11} & 2a_{12} b_{12} \\
a_{11} b_{21} + b_{11} a_{21} & a_{11} b_{22} + b_{11} a_{22} & a_{12} b_{21} + b_{12} a_{21} & a_{12} b_{22} + b_{12} a_{22} \\
a_{21} b_{11} + b_{21} a_{11} & a_{21} b_{12} + b_{21} a_{12} & a_{22} b_{11} + b_{22} a_{11} & a_{22} b_{12} + b_{22} a_{12} \\
2a_{21} b_{21} & a_{21} b_{22} + b_{21} a_{22} & a_{22} b_{21} + b_{22} a_{21} & 2a_{22} b_{22} 
\end{bmatrix} .
\end{equation}
With respect to the orthonormal basis $\vec{e}_1 \odot \vec{e}_1, \sqrt{2}(\vec{e}_1 \odot \vec{e}_2), \vec{e}_2 \odot \vec{e}_2$
of $\h \odot \h$, the symmetric tensor product $A \odot B$ has the matrix representation
\begin{equation}\label{eq:2x2to3x3}\small
    \begin{bmatrix}
a_{11}b_{11} &\frac{a_{11} b_{12} + b_{11}a_{12}}{\sqrt{2}} &  a_{12}b_{12} \\
\frac{a_{11} b_{21} + b_{11}a_{21}}{\sqrt{2}}& \frac{a_{11} b_{22} + b_{11}a_{22} + a_{12} b_{21} + b_{12}a_{21}}{2} & \frac{a_{12} b_{22} + b_{12}a_{22}}{\sqrt{2}} \\
a_{21}b_{21}& \frac{a_{21} b_{22} + b_{21}a_{22}}{\sqrt{2}} & a_{22}b_{22}
\end{bmatrix} .
\end{equation}
\end{Example}

\begin{Proposition}\label{Prop:CommonInvariant}
If $A_1, A_2,  \ldots, A_n \in \mathcal{B} (\h)$ have a common invariant subspace $\mathcal{V} \subseteq \h$, then $\odot^n \mathcal{V}$ is invariant for $A_1 \odot A_2 \odot \cdots \odot A_n$.
\end{Proposition}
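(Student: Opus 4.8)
The plan is to reduce the claim to the action of the operator on simple symmetric tensors, using the explicit formula already obtained in the proof of Proposition \ref{Proposition:Invariant}, and then to upgrade from simple tensors to the full closed subspace by boundedness and density.

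First I would set $T := A_1 \odot A_2 \odot \cdots \odot A_n$, the restriction of $\S_n(A_1,A_2,\ldots,A_n)$ to $\h^{\odot n}$, and recall the computation in the proof of Proposition \ref{Proposition:Invariant}, which gives
\[
T ( \vec{v}_1 \odot \vec{v}_2 \odot \cdots \odot \vec{v}_n )
= \frac{1}{n!}\sum_{\sigma\in \Sigma_n} A_{\sigma(1)}\vec{v}_{1} \odot A_{\sigma(2)}\vec{v}_{2} \odot \cdots \odot A_{\sigma(n)}\vec{v}_{n}
\]
for all $\vec{v}_1, \vec{v}_2, \ldots, \vec{v}_n \in \h$. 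This formula is the entire engine of the argument.

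Next I would specialize to $\vec{v}_1, \vec{v}_2, \ldots, \vec{v}_n \in \mathcal{V}$. Since $\mathcal{V}$ is invariant under every $A_i$, each factor $A_{\sigma(i)}\vec{v}_i$ again lies in $\mathcal{V}$, so every summand on the right-hand side is a simple symmetric tensor of vectors from $\mathcal{V}$, hence an element of $\odot^n \mathcal{V}$. Because $\odot^n \mathcal{V}$ is a (closed) linear subspace, the whole finite sum lies in $\odot^n \mathcal{V}$. Thus $T$ carries every simple symmetric tensor built from $\mathcal{V}$ into $\odot^n \mathcal{V}$, and by linearity the same holds for their finite linear combinations.

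Finally I would close the argument by continuity: the simple symmetric tensors $\vec{v}_1 \odot \cdots \odot \vec{v}_n$ with $\vec{v}_i \in \mathcal{V}$ have dense span in $\odot^n \mathcal{V}$, the operator $T$ is bounded, and $\odot^n \mathcal{V}$ is closed, so $T(\odot^n \mathcal{V}) \subseteq \odot^n \mathcal{V}$. The argument is essentially routine, and the only step requiring any care is this last passage from the dense span of simple tensors to the closed subspace $\odot^n \mathcal{V}$, which is exactly where boundedness of $T$ and closedness of $\odot^n \mathcal{V}$ are used; if one does not wish to assume $\mathcal{V}$ itself is closed, one simply interprets $\odot^n \mathcal{V}$ as the closed span of the simple symmetric tensors drawn from $\mathcal{V}$, and the same reasoning applies verbatim.
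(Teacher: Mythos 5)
Your proof is correct and follows essentially the same route as the paper, which simply cites Proposition \ref{Proposition:Invariant}: the computation you quote is exactly the one carried out in that proposition's proof, and your specialization to vectors in $\mathcal{V}$ plus the density-and-boundedness step is precisely what the paper's one-line proof leaves implicit. Your closing remark on how to interpret $\odot^n \mathcal{V}$ when $\mathcal{V}$ is not assumed closed is a sensible clarification, since the paper never defines that notation explicitly.
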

\begin{proof}
    This follows from Proposition \ref{Proposition:Invariant}.
\end{proof}

\begin{Proposition}\label{Proposition:Adjoint}
Let $A_1,A_2,\ldots,A_n \in \B(\h)$.  Then
$(A_1 \odot A_2 \odot \cdots \odot A_n)^*  = A_1^* \odot A_2^*\odot \cdots \odot A_n^*$
and
$(A_1 \wedge A_2 \wedge \cdots \wedge A_n)^* = A_1^* \wedge A_2^*\wedge \cdots \wedge A_n^*$.
\end{Proposition}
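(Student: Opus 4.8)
The plan is to realize both sides as restrictions of ordinary (unsymmetrized) tensor products to the closed subspaces $\h^{\odot n}$ and $\h^{\wedge n}$, and then to exploit the fact that these subspaces \emph{reduce} the relevant operators. Write $T := \S_n(A_1,\ldots,A_n) = \frac{1}{n!}\sum_{\pi \in \Sigma_n} A_{\pi(1)} \otimes \cdots \otimes A_{\pi(n)}$, so that by definition $A_1 \odot \cdots \odot A_n = T|_{\h^{\odot n}}$ and $A_1 \wedge \cdots \wedge A_n = T|_{\h^{\wedge n}}$. I will prove the symmetric identity; the antisymmetric one is identical with $\h^{\wedge n}$ replacing $\h^{\odot n}$.

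First I would record the elementary adjoint formula for a plain tensor product of operators, namely $(A_1 \otimes \cdots \otimes A_n)^* = A_1^* \otimes \cdots \otimes A_n^*$ on $\h^{\otimes n}$. This is checked on simple tensors directly from \eqref{eq:InnerProductMultiply}: for all $\vec{u}_i, \vec{v}_i \in \h$,
\begin{equation*}
\langle (A_1 \otimes \cdots \otimes A_n)(\vec{u}_1 \otimes \cdots \otimes \vec{u}_n), \, \vec{v}_1 \otimes \cdots \otimes \vec{v}_n\rangle = \prod_{i=1}^n \langle A_i \vec{u}_i, \vec{v}_i\rangle = \prod_{i=1}^n \langle \vec{u}_i, A_i^* \vec{v}_i\rangle,
\end{equation*}
and the right-hand side equals $\langle \vec{u}_1 \otimes \cdots \otimes \vec{u}_n, (A_1^* \otimes \cdots \otimes A_n^*)(\vec{v}_1 \otimes \cdots \otimes \vec{v}_n)\rangle$; density of the simple tensors in $\h^{\otimes n}$ finishes this. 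Summing over $\pi \in \Sigma_n$ then gives $T^* = \frac{1}{n!}\sum_{\pi \in \Sigma_n} A_{\pi(1)}^* \otimes \cdots \otimes A_{\pi(n)}^* = \S_n(A_1^*,\ldots,A_n^*)$, which is precisely the operator whose restriction to $\h^{\odot n}$ is $A_1^* \odot \cdots \odot A_n^*$.

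Now comes the one genuinely delicate point: the adjoint of a \emph{restriction} of an operator to a closed subspace is in general the compressed adjoint, and it coincides with the restricted adjoint only when the subspace reduces the operator. Here Proposition \ref{Proposition:Invariant}, applied to $(A_1,\ldots,A_n)$ and again to $(A_1^*,\ldots,A_n^*)$, shows that $\h^{\odot n}$ is invariant for both $T$ and $T^* = \S_n(A_1^*,\ldots,A_n^*)$. A subspace invariant for both an operator and its adjoint reduces that operator, so for $\vec{x},\vec{y} \in \h^{\odot n}$ we have $\langle T\vec{x}, \vec{y}\rangle = \langle \vec{x}, T^* \vec{y}\rangle$ with $T^* \vec{y} \in \h^{\odot n}$; hence the adjoint of $T|_{\h^{\odot n}}$, taken as an operator on $\h^{\odot n}$, is exactly $T^*|_{\h^{\odot n}}$. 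Spelling this out,
\begin{equation*}
(A_1 \odot \cdots \odot A_n)^* = (T|_{\h^{\odot n}})^* = T^*|_{\h^{\odot n}} = \S_n(A_1^*,\ldots,A_n^*)|_{\h^{\odot n}} = A_1^* \odot \cdots \odot A_n^*.
\end{equation*}

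The same three steps prove the antisymmetric identity verbatim, since Proposition \ref{Proposition:Invariant} guarantees that $\h^{\wedge n}$ is likewise invariant under both $T$ and $T^*$. I expect the main obstacle to be expository rather than mathematical: making precise that ``$A_1 \odot \cdots \odot A_n$'' denotes $T$ compressed to $\h^{\odot n}$, and justifying that the compression of $T^*$ against $\h^{\odot n}$ equals $T^*$ restricted there, i.e.\ that the relevant off-diagonal block of $T^*$ vanishes because $\h^{\odot n}$ is reducing. Everything else is a direct computation from \eqref{eq:InnerProductMultiply}.
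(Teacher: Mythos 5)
Your proof is correct and follows essentially the same route as the paper's: the adjoint formula $(A_1 \otimes \cdots \otimes A_n)^* = A_1^* \otimes \cdots \otimes A_n^*$ combined with the invariance of $\h^{\odot n}$ and $\h^{\wedge n}$ under $\S_n(A_1,\ldots,A_n)$ and $\S_n(A_1^*,\ldots,A_n^*)$ from Proposition \ref{Proposition:Invariant}. The only difference is that you spell out the reducing-subspace step (adjoint of a restriction versus compression of the adjoint) that the paper's two-line proof leaves implicit, which is a worthwhile clarification but not a different argument.
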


\begin{proof}
Recall that $\h^{\odot n}$ and $\h^{\wedge n}$ are invariant under
$\S_n(A_1,A_2,\ldots,A_n)$.  Since
$(A_1 \otimes A_2 \otimes \cdots \otimes A_n)^* = A_1^* \otimes A_2^* \otimes \cdots \otimes A_n^*$,
the result follows.
\end{proof}

\begin{Remark}\label{Remark:AAStar}
Observe that $(A \odot A^*)^* = A^* \odot A = A \odot A^*$; that is, $A \odot A^*$ is selfadjoint. For example, for the
 $2 \times 2$ matrix $A$ in \eqref{eq:A2x2}, the formula \eqref{eq:2x2to3x3} gives
\begin{equation}
A \odot A^* = \begin{bmatrix}
|a_{11}|^2 &\frac{a_{11} \overline{a_{21}} + \overline{a_{11}}a_{12}}{\sqrt{2}} &  a_{12}\overline{a_{21}} \\
\frac{a_{11} \overline{a_{12}} + \overline{a_{11}}a_{21}}{\sqrt{2}}& \frac{a_{11} \overline{a_{22}} + \overline{a_{11}}a_{22} + |a_{12}|^2 + |a_{21}|^2}{2} & \frac{a_{12} \overline{a_{22}} + \overline{a_{21}}a_{22}}{\sqrt{2}} \\
a_{21}\overline{a_{12}}& \frac{a_{21} \overline{a_{22}} + \overline{a_{12}}a_{22}}{\sqrt{2}} & |a_{22}|^2
\end{bmatrix} .
\end{equation}
\end{Remark}

\begin{Theorem}\label{Theorem:Closure}
Let $A_1,A_2,\ldots,A_n\in \B(\h)$.
\begin{enumerate}
\item If $A_1,A_2,\ldots,A_n$ are selfadjoint, then $A_1 \odot A_2 \odot \cdots \odot A_n$ is selfadjoint.
\item If $A_1,A_2,\ldots,A_n$ are normal and commute, then $A_1 \odot A_2 \odot \cdots \odot A_n$ is normal.
\item If $U \in \B(\h)$ is unitary, then $U \odot U \odot \cdots \odot U$ is unitary.
\end{enumerate}
\end{Theorem}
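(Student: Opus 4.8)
The plan is to settle (a) at once, then extract a single product formula for symmetric tensor products that powers both (b) and (c). Part (a) is immediate from Proposition~\ref{Proposition:Adjoint}: if every $A_i$ is selfadjoint, then
\[
(A_1 \odot \cdots \odot A_n)^* = A_1^* \odot \cdots \odot A_n^* = A_1 \odot \cdots \odot A_n .
\]

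The engine I would build first is the identity
\[
(A_1 \odot \cdots \odot A_n)(B_1 \odot \cdots \odot B_n)
= \frac{1}{n!}\sum_{\pi \in \Sigma_n}(A_1 B_{\pi(1)}) \odot \cdots \odot (A_n B_{\pi(n)}),
\]
which generalizes Lemma~\ref{prodsym} (the case $n=2$). To prove it I would act on a simple symmetric tensor $\vec{v}_1 \odot \cdots \odot \vec{v}_n$ and apply, twice, the formula $(C_1 \odot \cdots \odot C_n)(\vec{v}_1 \odot \cdots \odot \vec{v}_n) = \frac{1}{n!}\sum_{\sigma} C_{\sigma(1)}\vec{v}_1 \odot \cdots \odot C_{\sigma(n)}\vec{v}_n$ established inside the proof of Proposition~\ref{Proposition:Invariant}. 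The left side becomes $\frac{1}{(n!)^2}\sum_{\sigma,\tau}(A_{\tau(1)}B_{\sigma(1)})\vec{v}_1 \odot \cdots \odot (A_{\tau(n)}B_{\sigma(n)})\vec{v}_n$, and the right side yields the same double sum after the reindexing $\tau=\rho,\ \pi=\sigma\rho^{-1}$, which is a bijection of $\Sigma_n\times\Sigma_n$. Density of the simple symmetric tensors in $\h^{\odot n}$ then promotes the identity to all of $\h^{\odot n}$.

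For (c), I would apply the product formula with each left factor $U$ and each right factor $U^*$. Since $UU^*=U^*U=I$, every summand collapses to $I \odot \cdots \odot I = I$, so $\frac{1}{n!}\sum_{\pi} I = I$. Using Proposition~\ref{Proposition:Adjoint} to identify $(U^{\odot n})^* = (U^*)^{\odot n}$, both $(U^{\odot n})(U^{\odot n})^*$ and $(U^{\odot n})^*(U^{\odot n})$ equal $I$, so $U^{\odot n}$ is unitary.

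For (b), set $T = A_1 \odot \cdots \odot A_n$, so $T^* = A_1^* \odot \cdots \odot A_n^*$ by Proposition~\ref{Proposition:Adjoint}, and expand $TT^*$ and $T^*T$ by the product formula. The crucial external input is Fuglede's theorem: since the $A_i$ are commuting normal operators, each $A_i$ commutes with every $A_j^*$, so $\{A_i,A_j^*\}$ is a mutually commuting family. Using this, the $\pi$-summand of $TT^*$, namely $(A_1 A_{\pi(1)}^*) \odot \cdots \odot (A_n A_{\pi(n)}^*)$, equals $(A_{\pi(1)}^* A_1) \odot \cdots \odot (A_{\pi(n)}^* A_n)$; reindexing the tensor factors by $j=\pi(i)$ and invoking permutation invariance \eqref{eq:PermInvariance} rewrites this as $(A_1^* A_{\pi^{-1}(1)}) \odot \cdots \odot (A_n^* A_{\pi^{-1}(n)})$, which is exactly the $\pi^{-1}$-summand of $T^*T$. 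Because $\pi \mapsto \pi^{-1}$ permutes $\Sigma_n$, the two sums agree and $T$ is normal. I expect the main obstacle to be bookkeeping: pinning down the reindexing in the product formula and the term-by-term matching precisely, and recognizing that Fuglede's theorem is exactly the tool that upgrades ``commuting normals'' to the commuting family needed to slide $A_i$ past $A_{\pi(i)}^*$.
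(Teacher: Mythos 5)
Your proof is correct and follows essentially the same route as the paper: part (a) from Proposition~\ref{Proposition:Adjoint}, part (b) from the Fuglede--Putnam theorem plus Proposition~\ref{Proposition:Adjoint} and a computation, and part (c) from the multiplicativity of symmetric tensor products. Your $n$-fold product formula is just the natural generalization of Lemma~\ref{prodsym}, and it supplies, in full detail, exactly the ``computation'' that the paper's proof leaves implicit.
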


\begin{proof}
(a) This follows from Proposition \ref{Proposition:Adjoint}.

\medskip\noindent(b) The Fuglede--Putnam theorem \cite{FPthm} ensures that $A_i A_j^* = A_j^* A_i$ for $1\leq i,j\leq n$.  Proposition \ref{Proposition:Adjoint} and a computation establish the normality of 
$A_1 \odot A_2 \odot \cdots \odot A_n$.

\medskip\noindent(c) By Proposition \ref{Proposition:Adjoint},
$(U \odot U \odot \cdots \odot U)^* (U \odot U \odot \cdots \odot U) = I \odot I \odot \cdots \odot I$ and similarly $(U \odot U \odot \cdots \odot U) (U \odot U \odot \cdots \odot U)^*   = I \odot I \odot \cdots \odot I$.
\end{proof}

\begin{Example}
The normal matrices $A = \scriptsize \minimatrix{1}{i}{i}{1}$ and $B = \scriptsize \minimatrix{1}{-1}{1}{1}$ do not commute, but
$
A \odot B =
\Bigg[
\begin{smallmatrix}
 1 & -\frac{1-i}{\sqrt{2}} & -i \\
 \frac{1+i}{\sqrt{2}} & 1 &
   -\frac{1-i}{\sqrt{2}} \\
 i & \frac{1+i}{\sqrt{2}} & 1 \\
\end{smallmatrix}\Bigg]$
is not normal.  Thus, the commutativity hypothesis is necessary in (b).
\end{Example}

\begin{Example}
If $A,B \in \B(\h)$ are selfadjoint and noncommuting, then $A \odot B$ is selfadjoint, and hence normal.
Thus, the converse of (b) is false.
\end{Example}

\begin{Example}
The matrices $A = \scriptsize \minimatrix{1}{0}{0}{1}$ and $B = \scriptsize \minimatrix{0}{-1}{1}{0}$ are unitary, but
$A\odot B= \Bigg[\begin{smallmatrix}
0 & -1/\sqrt{2} &0\\
1/\sqrt{2} & 0 & -1/\sqrt{2}\\
0 & 1/\sqrt{2} &0
\end{smallmatrix}\Bigg]$ is not.
\end{Example}

\begin{Proposition}\label{compproj}
For orthogonal projections $P, Q$, where $P,Q \neq 0,I$ 
and $PQ = QP = 0$, the map $2P\odot Q$ is an orthogonal projection. Furthermore, $2P\odot Q \neq I \odot I , 0$.
\end{Proposition}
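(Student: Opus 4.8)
The plan is to verify directly that $2P \odot Q$ is selfadjoint and idempotent, which together characterize an orthogonal projection, and then to exhibit explicit eigenvectors to rule out the two trivial cases $0$ and $I \odot I$.

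For selfadjointness, note that orthogonal projections are in particular selfadjoint, so Proposition \ref{Proposition:Adjoint} yields $(P \odot Q)^* = P^* \odot Q^* = P \odot Q$, and scaling by $2$ preserves this. For idempotency I would apply Lemma \ref{prodsym} with $A = C = P$ and $B = D = Q$ to obtain $(P \odot Q)(P \odot Q) = \tfrac{1}{2}(P^2 \odot Q^2 + PQ \odot QP)$. The hypotheses $P^2 = P$, $Q^2 = Q$, and $PQ = QP = 0$ collapse the right-hand side to $\tfrac{1}{2}(P \odot Q + 0 \odot 0) = \tfrac{1}{2}(P \odot Q)$, whence $(2P \odot Q)^2 = 4(P \odot Q)^2 = 2P \odot Q$. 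Thus $2P \odot Q$ is an orthogonal projection.

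For nontriviality I would use the action of a symmetric tensor product on a simple symmetric tensor coming from Proposition \ref{Proposition:Invariant}, namely $(P \odot Q)(\vec{x} \odot \vec{y}) = \tfrac{1}{2}(P\vec{x} \odot Q\vec{y} + Q\vec{x} \odot P\vec{y})$. Since $P, Q \neq 0$, choose nonzero $\vec{u} \in \ran P$ and $\vec{v} \in \ran Q$; then $P\vec{u} = \vec{u}$ and $Q\vec{v} = \vec{v}$, while $QP = PQ = 0$ forces $Q\vec{u} = 0$ and $P\vec{v} = 0$, so $(2P \odot Q)(\vec{u} \odot \vec{v}) = \vec{u} \odot \vec{v}$. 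Lemma \ref{Lemma:VectorBounds} guarantees $\vec{u} \odot \vec{v} \neq 0$, so $2P \odot Q \neq 0$. Symmetrically, since $P \neq I$ its kernel contains a nonzero vector $\vec{w}$, and $P\vec{w} = 0$ gives $(2P \odot Q)(\vec{w} \odot \vec{w}) = 0$; as $\vec{w} \odot \vec{w} \neq 0$ again by Lemma \ref{Lemma:VectorBounds}, we conclude $2P \odot Q \neq I \odot I = I$.

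The computations are short, so there is no serious obstacle; the only points needing a little care are confirming that the cross term $PQ \odot QP$ vanishes outright (rather than merely simplifying) and selecting eigenvectors that exclude \emph{both} triviality cases at once — the range vectors of $P$ and $Q$ supply an eigenvalue-$1$ witness, while a kernel vector of $P$ supplies an eigenvalue-$0$ witness.
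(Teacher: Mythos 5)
Your proof is correct, and its overall shape---selfadjoint plus idempotent equals orthogonal projection, followed by an eigenvalue-$1$ witness and an eigenvalue-$0$ witness---matches the paper's. Where you genuinely diverge is in how the projection property is verified: the paper works upstairs on $\h^{\otimes 2}$, showing that $2\S_2(P,Q) = P \otimes Q + Q \otimes P$ is a selfadjoint idempotent (the cross terms vanish because $PQ = QP = 0$) and then restricting to $\h \odot \h$, whereas you stay inside $\h \odot \h$ throughout, getting selfadjointness from Proposition \ref{Proposition:Adjoint} and idempotency from the product formula of Lemma \ref{prodsym}, which gives $(P \odot Q)^2 = \tfrac{1}{2}(P \odot Q + PQ \odot QP) = \tfrac{1}{2}\, P \odot Q$ and hence $(2P\odot Q)^2 = 2P \odot Q$. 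Your route is a clean application of the symmetric-product calculus the paper has already built, and it sidesteps the (small but real) point that a restriction of a projection to a reducing subspace is again a projection; the paper's route is more self-contained, relying only on the tensor-level algebra. The nontriviality arguments are nearly identical: both use $\vec{u} \in \ran P$, $\vec{v} \in \ran Q$ to get the fixed vector $\vec{u} \odot \vec{v} \neq \vec{0}$, but for the eigenvalue-$0$ witness the paper takes $\vec{u} \odot \vec{u}$ with $\vec{u} \in \ran P$ (annihilated since $Q\vec{u} = QP\vec{u} = \vec{0}$), while you take $\vec{w} \odot \vec{w}$ with $\vec{w} \in \ker P$. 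Both are valid; the paper's choice has the minor advantage of showing that the hypothesis $P \neq I$ is not actually needed for this step (it already follows from $Q \neq 0$ and $PQ = 0$), whereas your argument invokes it explicitly.
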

\begin{proof}
Since $P$ and $Q$ are selfadjoint, $2 \S_2 (P,Q)$ is selfadjoint.
Since $P Q = QP = 0$, we have $(2 \S_2 (P,Q))^2 = 2 \S_2 (P,Q)$. Thus, $2 \S_2 (P,Q)$ is an orthogonal projection, so $2 \S_2 (P,Q)|_{\h \odot \h} = 2P\odot Q$ is an orthogonal projection. To show $2P\odot Q \neq I \odot I , 0$ observe if 
$\vec{x},\vec{y} \in \h$ are nonzero, $P\vec{x} = \vec{x}$, and $Q\vec{y} =\vec{y}$, then 
$(2P\odot Q )(\vec{x} \odot \vec{y}) = \vec{x} \odot \vec{y}$ and $(2P\odot Q) (\vec{x} \odot \vec{x}) = 0$.
\end{proof}

One can define tensor powers of bounded conjugate-linear operators.
An analogue of Proposition \ref{Proposition:Invariant} shows that 
$\h^{\odot n}$ is invariant under $\S_n(C_1,C_2,\ldots, C_n)$
for any bounded conjugate-linear operators $C_1, C_2,  \ldots, C_n $. 
We say that $C$ is a \emph{conjugation} on $\h$ if $C$ is conjugate linear,
isometric, and involutive.  We say that $T \in B(\h)$ is \emph{$C$-symmetric} if $T = CT^*C$ \cite{CSO1, CSO2, CSO3}.
If $C$ is a conjugation, let $C^{\odot n}$ 
denote the restriction of $C^{\otimes n}$ to $\h^{\odot n}$.

\begin{Proposition}\label{Csymmetric}
Let $C$ be a conjugation on $\h$ and let $A_1, A_2,  \ldots, A_n \in \B(\h)$ be $C$-symmetric.
(a) $A_1 \otimes A_2 \otimes \cdots \otimes A_n $ is $C^{\otimes n}$-symmetric.
(b) $A_1 \odot A_2 \odot \cdots \odot A_n $ is $C^{\odot n}$-symmetric.
\end{Proposition}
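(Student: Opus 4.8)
The plan is to handle (a) by a direct computation on simple tensors and then derive (b) from (a) by restricting a symmetry relation to the invariant subspace $\h^{\odot n}$.

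First I would record the relevant properties of $C^{\otimes n}$, the tensor power of the conjugate-linear map $C$, defined on simple tensors by $C^{\otimes n}(\vec v_1 \otimes \cdots \otimes \vec v_n) = C\vec v_1 \otimes \cdots \otimes C\vec v_n$. It is conjugate linear (a scalar absorbed into any one factor is conjugated by the corresponding $C$), norm preserving (norms multiply across tensor factors and $C$ is isometric), and involutive (because $C^2 = I$ factorwise). Hence $C^{\otimes n}$ is a conjugation on $\h^{\otimes n}$. For (a), I would then verify the defining identity $A_1 \otimes \cdots \otimes A_n = C^{\otimes n}(A_1 \otimes \cdots \otimes A_n)^* C^{\otimes n}$ on simple tensors. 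Using $(A_1 \otimes \cdots \otimes A_n)^* = A_1^* \otimes \cdots \otimes A_n^*$ from the proof of Proposition \ref{Proposition:Adjoint}, applying the right-hand side to $\vec v_1 \otimes \cdots \otimes \vec v_n$ gives $CA_1^* C\vec v_1 \otimes \cdots \otimes CA_n^* C\vec v_n$, and the $C$-symmetry hypothesis $A_i = CA_i^* C$ collapses this to $A_1\vec v_1 \otimes \cdots \otimes A_n\vec v_n$. Since these two linear operators agree on the dense span of simple tensors, they coincide.

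For (b), the cleanest route is to apply (a) to every permuted tuple. The $C$-symmetric operators form a complex-linear subspace of $\B(\h)$, since $C(\lambda A)^* C = \lambda\, CA^*C$ (two conjugate linearities cancel) and $C(A^*+B^*)C = CA^*C + CB^*C$. As the tuple $A_{\pi(1)}, \ldots, A_{\pi(n)}$ is again $C$-symmetric for each $\pi \in \Sigma_n$, part (a) shows each summand $A_{\pi(1)} \otimes \cdots \otimes A_{\pi(n)}$ is $C^{\otimes n}$-symmetric; averaging, $\S_n(A_1, \ldots, A_n)$ is $C^{\otimes n}$-symmetric. I would then restrict the relation $\S_n(A_1, \ldots, A_n) = C^{\otimes n}\,\S_n(A_1^*, \ldots, A_n^*)\, C^{\otimes n}$ to $\h^{\odot n}$. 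This restriction is legitimate because $\h^{\odot n}$ is invariant under $\S_n(A_1, \ldots, A_n)$ and under $\S_n(A_1^*, \ldots, A_n^*)$ by Proposition \ref{Proposition:Invariant}, and under $C^{\otimes n}$ by the conjugate-linear analogue of Proposition \ref{Proposition:Invariant} noted in the text (equivalently, because $C^{\otimes n}$ commutes with each $\hat\pi$). Identifying $\S_n(A_1, \ldots, A_n)|_{\h^{\odot n}} = A_1 \odot \cdots \odot A_n$, then $\S_n(A_1^*, \ldots, A_n^*)|_{\h^{\odot n}} = (A_1 \odot \cdots \odot A_n)^*$ via Proposition \ref{Proposition:Adjoint}, and $C^{\otimes n}|_{\h^{\odot n}} = C^{\odot n}$ yields exactly $A_1 \odot \cdots \odot A_n = C^{\odot n}(A_1 \odot \cdots \odot A_n)^* C^{\odot n}$.

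I expect the only delicate point to be the bookkeeping around the conjugate linearity of $C^{\otimes n}$: one must confirm that $C^{\otimes n}(\cdots)^* C^{\otimes n}$ is genuinely complex linear, so that it can legitimately equal the linear operator $A_1 \otimes \cdots \otimes A_n$, and that restricting the operator identity to $\h^{\odot n}$ is valid factor by factor (each of the three factors maps $\h^{\odot n}$ into itself). The algebraic heart, namely $CA_i^* C = A_i$ applied in each tensor slot, is immediate.
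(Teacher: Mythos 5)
Your proof is correct and takes essentially the same route as the paper: part (a) by the identity $(A_1 \otimes \cdots \otimes A_n)^* = A_1^* \otimes \cdots \otimes A_n^*$ together with $CA_i^*C = A_i$ applied in each tensor slot, and part (b) by noting $C^{\otimes n}(\h^{\odot n}) \subseteq \h^{\odot n}$ and combining (a) with Proposition \ref{Proposition:Adjoint} to restrict the symmetry relation to $\h^{\odot n}$. The only difference is that you spell out the details (applying (a) to each permuted tuple, linearity of the $C^{\otimes n}$-symmetric operators, and invariance of $\h^{\odot n}$ under all three factors) that the paper compresses into a one-line citation.
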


\begin{proof}
(a) Since $(A_1 \otimes A_2 \otimes \cdots \otimes A_n)^* = A_1^* \otimes A_2^* \otimes \cdots \otimes A_n^*$, the result follows.

\medskip\noindent (b) Since $C^{\otimes n} (\h^{\odot n}) \subseteq \h^{\odot n}$, it follows that $C^{\odot n}$ is a well-defined conjugation on $\h^{\odot n}$. The desired result follows from part (a) and Proposition \ref{Proposition:Adjoint}.
\end{proof}

\section{Norms and spectral radius}\label{Section:Norm}
In this section we provide various bounds for the norm of symmetric tensor products of operators, as well as a spectral-radius formula
for symmetric tensor powers.

\begin{Theorem}\label{Theorem:Norm}
Let $A,B \in \B(\h)$.
\begin{enumerate}
\item $\frac{1}{\sqrt{2}}\sup_{ \vec{x} \in \h, \|\vec{x}\| = 1}
 { \|A\vec{x}\| \|B\vec{x}\|}  \leq \| A \odot B \| $, and this is sharp.
\item If $A,B \neq 0$, then $A \odot B \neq 0$.
\item $\rho ( A^{\odot n} ) = \rho(A)^n$, in which $\rho(A) := \sup \{ |\lambda| \in \sigma(A) \}$ is the spectral radius
\end{enumerate}
\end{Theorem}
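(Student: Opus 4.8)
The plan is to prove the three parts in order, using part (1) as the main tool for part (2), and treating part (3) separately via Gelfand's formula.

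\emph{Part (1).} I would test the operator on ``diagonal'' symmetric tensors. For a unit vector $\vec{x} \in \h$, the element $\vec{x} \odot \vec{x} = \vec{x} \otimes \vec{x}$ is a unit vector of $\h \odot \h$, and the block description \eqref{eq:UseLots} gives $(A \odot B)(\vec{x} \odot \vec{x}) = A\vec{x} \odot B\vec{x}$. The lower bound in Lemma \ref{Lemma:VectorBounds} then yields
\[
\|A \odot B\| \;\geq\; \|A\vec{x} \odot B\vec{x}\| \;\geq\; \tfrac{1}{\sqrt{2}}\,\|A\vec{x}\|\,\|B\vec{x}\|,
\]
and taking the supremum over unit $\vec{x}$ produces the stated inequality. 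For sharpness I would use $A = \bigl[\begin{smallmatrix} 0 & 1 \\ 0 & 0 \end{smallmatrix}\bigr]$ and $B = I$ on $\C^2$: here $\sup_{\|\vec{x}\|=1}\|A\vec{x}\|\,\|B\vec{x}\| = \|A\| = 1$, while the representation \eqref{eq:2x2to3x3} shows $\|A \odot B\| = 1/\sqrt{2}$, so both sides equal $1/\sqrt{2}$. The mechanism is that $A\vec{e}_2 = \vec{e}_1 \perp \vec{e}_2 = B\vec{e}_2$, which is exactly the equality case of Lemma \ref{Lemma:VectorBounds}.

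\emph{Part (2).} I would deduce this directly from part (1). If $A, B \neq 0$, then $\ker A$ and $\ker B$ are proper subspaces of $\h$; since a complex vector space is never the union of two proper subspaces, there is a unit vector $\vec{x}$ with $A\vec{x} \neq 0$ and $B\vec{x} \neq 0$. Part (1) then forces $\|A \odot B\| \geq \tfrac{1}{\sqrt{2}}\|A\vec{x}\|\,\|B\vec{x}\| > 0$.

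\emph{Part (3).} The key point is that when all factors agree the symmetrization is trivial: $\S_n(A,\ldots,A) = A^{\otimes n}$, so $A^{\odot n}$ is just the restriction of $A^{\otimes n}$ to the invariant subspace $\h^{\odot n}$ (Proposition \ref{Proposition:Invariant}). Invariance lets powers pass through the restriction, giving $(A^{\odot n})^k = (A^k)^{\otimes n}\big|_{\h^{\odot n}} = (A^k)^{\odot n}$, whence $\|(A^{\odot n})^k\| = \|A^k\|^n$ by Proposition \ref{Proposition:BasicNormInequality}(b). Gelfand's spectral-radius formula then gives
\[
\rho(A^{\odot n}) = \lim_{k\to\infty}\bigl(\|A^k\|^{n}\bigr)^{1/k} = \Bigl(\lim_{k\to\infty}\|A^k\|^{1/k}\Bigr)^n = \rho(A)^n.
\]
The computations in (1) and (3) are routine; the two nonobvious ingredients are the choice of sharpness example in (1) and the ``no union of two proper subspaces'' fact in (2). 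I expect the main obstacle to be verifying sharpness in (1): one must confirm both that the supremum $\sup_{\|\vec{x}\|=1}\|A\vec{x}\|\,\|B\vec{x}\|$ is attained at a vector with $A\vec{x} \perp B\vec{x}$ and that $A \odot B$ achieves its norm on the associated diagonal tensor, so that every inequality in the chain collapses to an equality.
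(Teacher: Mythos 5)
Your proposal is correct and follows essentially the same route as the paper: the lower bound via evaluating on $\vec{x} \otimes \vec{x}$ together with Lemma \ref{Lemma:VectorBounds}, part (b) deduced from part (a) using the fact that $\h$ is not the union of the two proper subspaces $\ker A$ and $\ker B$ (the paper proves this inline via the $\vec{u}+\vec{v}$ construction rather than citing it), and part (c) via $\|(A^{\odot n})^k\| = \|A^k\|^n$ and Gelfand's formula. Your sharpness example $A = \bigl[\begin{smallmatrix} 0 & 1 \\ 0 & 0 \end{smallmatrix}\bigr]$, $B = I$ differs from the paper's pair $A = \bigl[\begin{smallmatrix} 1 & 0 \\ 0 & 0 \end{smallmatrix}\bigr]$, $B = \bigl[\begin{smallmatrix} 0 & 0 \\ 1 & 0 \end{smallmatrix}\bigr]$, but it checks out: by \eqref{eq:2x2to3x3} one gets $\|A \odot I\| = 1/\sqrt{2}$ while $\sup_{\|\vec{x}\|=1}\|A\vec{x}\|\,\|\vec{x}\| = 1$, so equality holds.
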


\begin{proof}
(a) If $ \|\vec{x}\| = 1$, then $\vec{x} \otimes \vec{x} \in \h \odot \h$ has norm one, so
Lemma \ref{Lemma:VectorBounds} ensures that
\begin{equation*}
\frac{ \|A\vec{x}\| \|B\vec{x}\|}{\sqrt{2}} \leq \| A\vec{x} \odot B\vec{x} \| = \left\|\frac{(A \otimes B + B \otimes A) (\vec{x} \otimes \vec{x})}{2} \right\| \leq \| A \odot B \|.
\end{equation*}
Equality is attained for
$A = \big[\begin{smallmatrix}
 1 & 0 \\
 0 & 0 \\
\end{smallmatrix}\big]$, 
$B = \big[\begin{smallmatrix}
 0 & 0 \\
 1 & 0 
\end{smallmatrix}\big]$, and
$\vec{x} = \big[ \begin{smallmatrix} 1 \\ 0 \end{smallmatrix} \big]$.  Indeed,
\eqref{eq:2x2to3x3} ensures that
\begin{equation*}
A \odot B 
= 
\bigg[
\begin{smallmatrix}
0 & 0 & 0 \\
\frac{1}{\sqrt{2}} & 0 & 0 \\
0 & 0 & 0 
\end{smallmatrix}
\bigg],
\quad \text{and hence} \quad
\| A \odot B \| = \frac{1}{\sqrt{2}},
\end{equation*}
while $\vec{x}$ is of unit norm and $\|A\vec{x}\| = \| B\vec{x} \| = 1$, so $\frac{ \|A\vec{x}\| \|B\vec{x}\|}{\sqrt{2}} = \frac{1}{\sqrt{2}} = \| A \odot B \|  $.

\medskip\noindent(b) 
Let $A,B \neq 0$.  If there is a unit vector $\vec{x}$ such that 
$A \vec{x} \neq \vec{0}$ and $B \vec{x} \neq \vec{0}$, then (a) ensures that
$0< \frac{1}{\sqrt{2}} \| A\vec{x} \| \| B \vec{x} \| \leq \| A \odot B\|$.
So suppose that $A\vec{x} = \vec{0}$ or $B\vec{x} = \vec{0}$ for all $\vec{x}\in \h$.
Pick $\vec{u}$ such that $ A \vec{u} \neq \vec{0}$ and $\vec{v} $ such that  $ B \vec{v} \neq \vec{0}$. 
Then $B \vec{u} =\vec{0}$ and $A \vec{v} =\vec{0}$; moreover, $\vec{u}\neq -\vec{v}$.
Let $\vec{x} = \frac{\vec{u} + \vec{v}}{ \| \vec{u} + \vec{v} \| }$, then (a) ensures that
\begin{equation*}
0 <  \frac{1}{\sqrt{2}} \frac{ \|A \vec{u} \| \| B \vec{v} \| }{\| \vec{u} + \vec{v} \|^2 } 
=  \frac{1}{\sqrt{2}} \| A\vec{x} \| \| B\vec{x} \|  \leq  \| A \odot B \|.
\end{equation*}
In both cases, $A \odot B\neq 0 $ since it has positive norm.

\medskip\noindent(c) Since $ (A^{\odot n})^k =  (A^k)^{\odot n}$ for each $k \in \N$, 
Proposition \ref{Proposition:BasicNormInequality} ensures that
$ \| (A^{\odot n})^k \| = \|(A^k)^{\odot n} \| = \|A^k\|^n$. Gelfand's formula \cite[Prop.~3.8, Ch.~5]{Conway} yields
$$\rho ( A^{\odot n}) = \inf_{k \in \mathbb{N}} \| (A^{\odot n})^k \|^{\frac{1}{k}}  = \inf_{k \in \mathbb{N}} \|A^k\|^{\frac{n}{k}} =  \rho(A)^n. \qedhere
$$
\end{proof}

In contrast to symmetric tensor products, the antisymmetric products of nonzero operators may be $0$.
If $P$ is a rank-one orthogonal projection, then $P^{\wedge n} = 0$ for $n\geq 2$.

\begin{Theorem}\label{already}
\begin{enumerate}
    \item If $A_1, A_2,  \ldots, A_n \in \mathcal{B} ( \h) $ and the $A_i$ have orthogonal ranges, then 
    $\norm{A_1 \odot A_2 \odot \cdots \odot A_n} \leq \frac{1}{\sqrt{n!}}\norm{A_1}\norm{A_2} \cdots \norm{A_n}$.
For $n=2$, the inequality is sharp.
\item If  $(\ker B)^\perp \subseteq \ker A$ and $\ran B \subseteq (\ran A)^\perp$, then
$\frac{1}{2}\|A\|\|B\| \leq\|A \odot B\| \leq\frac{1}{\sqrt{2}}\norm{A}\norm{B}$.
The inequalities are sharp.
\end{enumerate}
\end{Theorem}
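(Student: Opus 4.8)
The plan is to prove the two inequalities separately and then exhibit extremal examples. For the upper inequality, observe that $\ran B\subseteq(\ran A)^\perp$ is exactly the statement that $A$ and $B$ have orthogonal ranges. Hence $\|A\odot B\|\leq\tfrac{1}{\sqrt2}\|A\|\|B\|$ is precisely the $n=2$ instance of part (a), and it is sharp already within the orthogonal-range class witnessing part (a): for $A=\big[\begin{smallmatrix}1&0\\0&0\end{smallmatrix}\big]$ and $B=\big[\begin{smallmatrix}0&0\\1&0\end{smallmatrix}\big]$ one has orthogonal ranges and $\|A\odot B\|=\tfrac{1}{\sqrt2}=\tfrac{1}{\sqrt2}\|A\|\|B\|$.

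For the lower inequality I would argue by choosing a good test vector; we may assume $A,B\neq 0$, since otherwise the bound is trivial. The crucial observation is that $(\ker B)^\perp\subseteq\ker A$ forces $A$ to annihilate $(\ker B)^\perp$, so $A=AP_{\ker B}$ and therefore
\[
\|A\|=\sup\{\|A\vec x\|:\vec x\in\ker B,\ \|\vec x\|=1\}.
\]
Fix $\varepsilon>0$ and choose a unit vector $\vec x\in\ker B$ with $\|A\vec x\|>\|A\|-\varepsilon$, together with a unit vector $\vec y\in(\ker B)^\perp$ with $\|B\vec y\|>\|B\|-\varepsilon$ (possible because $B$ annihilates $\ker B$). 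By construction $\vec x\perp\vec y$, $B\vec x=\vec 0$, and $A\vec x\perp B\vec y$ since $A\vec x\in\ran A$ and $B\vec y\in\ran B$ lie in orthogonal subspaces.

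Next I would compute the relevant Rayleigh quotient. From the definition of $A\odot B$ as the restriction of $\tfrac12(A\otimes B+B\otimes A)$ (applied to $\vec x\odot\vec y=\tfrac12(\vec x\otimes\vec y+\vec y\otimes\vec x)$) one gets
\[
(A\odot B)(\vec x\odot\vec y)=\tfrac12\big(A\vec x\odot B\vec y+A\vec y\odot B\vec x\big)=\tfrac12\,A\vec x\odot B\vec y,
\]
the last step because $B\vec x=\vec 0$. Since $A\vec x\perp B\vec y$, the equality case of Lemma \ref{Lemma:VectorBounds} gives $\|A\vec x\odot B\vec y\|=\tfrac{1}{\sqrt2}\|A\vec x\|\|B\vec y\|$, while $\|\vec x\odot\vec y\|=\tfrac{1}{\sqrt2}$ because $\vec x\perp\vec y$ are unit vectors. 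Dividing,
\[
\|A\odot B\|\geq\frac{\|(A\odot B)(\vec x\odot\vec y)\|}{\|\vec x\odot\vec y\|}=\tfrac12\|A\vec x\|\|B\vec y\|>\tfrac12(\|A\|-\varepsilon)(\|B\|-\varepsilon),
\]
and letting $\varepsilon\to0$ yields $\|A\odot B\|\geq\tfrac12\|A\|\|B\|$.

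For the sharpness of the lower bound I would take $A=\diag(1,0)$ and $B=\diag(0,1)$ on $\C^2$; these satisfy both hypotheses, and \eqref{eq:2x2to3x3} gives $A\odot B=\diag(0,\tfrac12,0)$, so $\|A\odot B\|=\tfrac12=\tfrac12\|A\|\|B\|$. The hard part is the lower bound: the key insight is that $(\ker B)^\perp\subseteq\ker A$ lets us locate near-maximizing unit vectors for $A$ \emph{inside} $\ker B$, so that the single surviving term $A\vec x\odot B\vec y$ is simultaneously norm-maximizing for both operators and built from orthogonal vectors, combining both hypotheses at once. The remaining technical point is that the operator norms need not be attained, which is why the selection is phrased with $\varepsilon$.
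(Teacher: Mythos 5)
Your argument for part (b) is correct and is essentially the paper's own: the paper proves the lower bound by testing $\tfrac12(A\otimes B+B\otimes A)$ on $\vec{u}\otimes\vec{v}+\vec{v}\otimes\vec{u}$ with $\vec{u}\in(\ker A)^{\perp}$ and $\vec{v}\in\ker A$, which is your choice of $\vec{x}\in\ker B$, $\vec{y}\in(\ker B)^{\perp}$ in disguise, since the hypothesis $(\ker B)^{\perp}\subseteq\ker A$ is equivalent to $(\ker A)^{\perp}\subseteq\ker B$; your sharpness example for the lower bound ($A=\diag(1,0)$, $B=I-A$) is also the paper's.

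The genuine gap is part (a). The statement comprises both parts, and your proposal never establishes the inequality $\norm{A_1\odot A_2\odot\cdots\odot A_n}\leq\frac{1}{\sqrt{n!}}\norm{A_1}\norm{A_2}\cdots\norm{A_n}$: you invoke its $n=2$ case to obtain the upper bound in (b) and you verify the sharpness example, but the inequality itself is taken as a black box, so the upper bound in (b) rests on an unproven claim. The missing idea is Pythagorean. If the $A_i$ have mutually orthogonal ranges and $\pi\neq\tau$ in $\Sigma_n$, choose $i$ with $\pi(i)\neq\tau(i)$; then $\ran A_{\pi(i)}\perp\ran A_{\tau(i)}$ forces the ranges of $A_{\pi(1)}\otimes\cdots\otimes A_{\pi(n)}$ and $A_{\tau(1)}\otimes\cdots\otimes A_{\tau(n)}$ to be orthogonal subspaces of $\h^{\otimes n}$ (the $i$-th factors of their images are orthogonal). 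Hence for every $\vec{x}\in\h^{\otimes n}$,
\begin{equation*}
\Big\|\sum_{\pi\in\Sigma_n}(A_{\pi(1)}\otimes\cdots\otimes A_{\pi(n)})\vec{x}\Big\|^{2}
=\sum_{\pi\in\Sigma_n}\norm{(A_{\pi(1)}\otimes\cdots\otimes A_{\pi(n)})\vec{x}}^{2}
\leq n!\,\norm{A_1}^{2}\cdots\norm{A_n}^{2}\norm{\vec{x}}^{2},
\end{equation*}
so $\norm{\S_n(A_1,\ldots,A_n)}\leq\frac{\sqrt{n!}}{n!}\norm{A_1}\cdots\norm{A_n}=\frac{1}{\sqrt{n!}}\norm{A_1}\cdots\norm{A_n}$, and restricting to $\h^{\odot n}$ gives (a); this is exactly the step (carried out in the paper by taking suprema over finite sums of simple tensors) that improves the triangle-inequality bound of Proposition \ref{Proposition:BasicNormInequality} to $1/\sqrt{n!}$, and it cannot be skipped. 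One further caution, which applies to the paper as much as to you: the example witnessing sharpness of (a) for $n=2$ has orthogonal ranges but does not satisfy the kernel hypothesis $(\ker B)^{\perp}\subseteq\ker A$, so it does not by itself certify sharpness of the upper inequality in (b).
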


\begin{proof}
(a) Recall that the set of finite sums $\sum_{i=1}^k \vec{v}^1_i\otimes \vec{v}^2_i \otimes \cdots \otimes \vec{v}^n_i$
of simple tensors are dense in $\h^{\otimes n}$.  Take the supremum over such vectors and observe that
\begin{align}
&\Big\|\sum_{\pi\in \Sigma_n} A_{\pi(1)} \otimes A_{\pi(2)} \otimes \dots \otimes A_{\pi(n)}\Big\|\\
&\qquad= \sup \frac{\norm{(\sum_{\pi\in \Sigma_n} A_{\pi(1)} \otimes A_{\pi(2)} \otimes \dots \otimes A_{\pi(n)})(\sum_{i=1}^k \vec{v}^1_i\otimes \vec{v}^2_i \otimes \cdots \otimes \vec{v}^n_i)}}{\norm{\sum_{i=1}^k \vec{v}^1_i\otimes \vec{v}^2_i \otimes \cdots \otimes \vec{v}^n_i}}  \\
&\qquad=\sup \frac{\norm{(\sum_{\pi\in \Sigma_n} \sum_{i=1}^k  (A_{\pi(1)}\vec{v}_i^1 \otimes A_{\pi(2)}\vec{v}_i^2 \otimes \dots \otimes A_{\pi(n)}\vec{v}_i^n)}}{\norm{\sum_{i=1}^k \vec{v}^1_i\otimes \vec{v}^2_i \otimes \cdots \otimes \vec{v}^n_i}} \\
&\qquad=\sup \frac{(\sum_{\pi\in \Sigma_n} \norm{ \sum_{i=1}^k  A_{\pi(1)}\vec{v}_i^1 \otimes A_{\pi(2)}\vec{v}_i^2 \otimes \dots \otimes A_{\pi(n)}\vec{v}_i^n}^2)^{1/2}}{\norm{\sum_{i=1}^k \vec{v}^1_i\otimes \vec{v}^2_i \otimes \cdots \otimes \vec{v}^n_i}} \\
&\qquad= \sup_{\vec{x} \in  \h^{\otimes n} }  \frac{\left( \sum_{\pi \in \Sigma_n }\norm{(A_{\pi(1)} \otimes A_{\pi(2)} \otimes \cdots \otimes A_{\pi(n)} )\vec{x}}^2\right)^{1/2}}{\norm{ \vec{x}}} \label{finalline} \\
&\qquad \leq \sqrt{n!}\norm{A_1}\norm{A_2}\cdots \norm{A_n}.
\end{align}
The prepenultimate equality above follows because the $A_i$ have orthogonal ranges;
the final inequality is due to \eqref{eq:BoundedExtend}.
For $n=2$, the matrices from the proof of Theorem \ref{Theorem:Norm}.a have orthogonal ranges and demonstrate that the inequality is sharp.

\medskip\noindent(b)
Part (a) ensures that the desired upper inequality holds and is sharp.  It suffices to examine the lower inequality.
For $\vec{u}\in  (\ker A)^\perp$ and $\vec{v}\in \ker A$,
\begin{align}
\norm{(A\otimes B+B\otimes A)(\vec{u}\otimes \vec{v}+\vec{v}\otimes \vec{u})}
&= \norm{A\vec{u}\otimes B\vec{v} + B\vec{v}\otimes A\vec{u}}\\
&= (\norm{A\vec{u}\otimes B\vec{v}}^2 + \norm{B\vec{v}\otimes A\vec{u}}^2)^{1/2} 
\end{align}
since $(A\vec{u} \otimes B\vec{v}) \perp (B\vec{v}\otimes A\vec{u} )$.
Then 
\begin{align*}
\norm{A\odot B}
&\geq \sup_{\substack{\vec{u}\in \ker A^{\perp} \\ \vec{v} \in \ker A }}\frac{\norm{(A\odot B)(\vec{u}\otimes \vec{v}+\vec{v}\otimes \vec{u})}}{\norm{\vec{u}\otimes \vec{v}+\vec{v}\otimes \vec{u}}}\\
&= \frac{1}{2}\sup_{\substack{\vec{u}\in \ker A^{\perp} \\ \vec{v} \in \ker A}}\frac{(\norm{A\vec{u}\otimes B\vec{v}}^2+\norm{B\vec{v}\otimes A\vec{u}}^2)^{1/2}}{\sqrt{2}\norm{\vec{v}}\norm{\vec{u}}}\\
&= \frac{1}{2} \sup_{\substack{\vec{u}\in \ker A^{\perp} \\ \vec{v} \in \ker A }}
\frac{ \|A\vec{u} \| \| B\vec{v} \|}{\norm{\vec{u}}\norm{\vec{v}}}
=\frac{1}{{2}}\norm{A}\norm{B}
\end{align*}
since $\norm{A}=\sup_{\vec{u}\in  (\ker A)^\perp}\frac{\norm{A\vec{u}}}{\norm{\vec{u}}}$ and $(\ker B)^\perp \subseteq \ker A$.

To see that the lower inequality is sharp,
let $A = \big[ \begin{smallmatrix}
1 & 0 \\
0 & 0
\end{smallmatrix}\big]$ and $ B = I - A$, so
$(\ker B)^\perp \subseteq \ker A$ and $\ran B \subseteq (\ran A)^\perp$. 
Then Proposition \ref{compproj} yields $\norm{A \odot B} = \frac{1}{2}$.
\end{proof}

\section{Spectrum}\label{Section:Spectrum}
Here we present results on the spectrum of symmetric products of Hilbert-space operators
(the finite-dimensional case is simpler; see \cite[p.~18]{Bhatia}).
We find a complete description in some special cases.
In what follows, $\sigma(A)$, $\sigma_{\textrm{p}}(A)$, and $\sigma_{\textrm{ap}}(A)$ denote the spectrum, point spectrum
and approximate point spectrum of $A$, respectively \cite[Def.~2.4.5]{OTBE}.
For $X, Y \subseteq \C$, let $X + Y := \{ x+y : x \in X, \, y \in Y \}$ and $XY := \{ xy :  x \in X, \, y \in Y \}$.

\begin{Theorem}[Brown--Pearcy \cite{Brown&Pearcy}]\label{Theorem:BrownPearcy}
$\sigma(A\otimes B)=\sigma(A)\sigma(B)$ for all $A,B \in \B(\h)$.
\end{Theorem}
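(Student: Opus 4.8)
The plan is to prove the two inclusions $\sigma(A)\sigma(B)\subseteq\sigma(A\otimes B)$ and $\sigma(A\otimes B)\subseteq\sigma(A)\sigma(B)$ separately, after the reduction $A\otimes B=(A\otimes I)(I\otimes B)$ into a product of the two \emph{commuting} operators $S:=A\otimes I$ and $T:=I\otimes B$. As a preliminary I would record that $\sigma(S)=\sigma(A)$ and $\sigma(T)=\sigma(B)$: identifying $\h\otimes\h$ with a countable orthogonal direct sum of copies of $\h$ (indexed by an orthonormal basis of the second factor) turns $A\otimes I$ into $\bigoplus A$, whose spectrum is plainly $\sigma(A)$ since the blocks coincide.

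For $\sigma(A)\sigma(B)\subseteq\sigma(A\otimes B)$ I would first dispatch the generic case through the approximate point spectrum. If $\alpha\in\sigma_{\mathrm{ap}}(A)$ and $\beta\in\sigma_{\mathrm{ap}}(B)$, pick unit vectors with $(A-\alpha)\vec{x}_n\to\vec{0}$ and $(B-\beta)\vec{y}_n\to\vec{0}$; then $\vec{x}_n\otimes\vec{y}_n$ is a unit vector and
\[
(A\otimes B-\alpha\beta I)(\vec{x}_n\otimes\vec{y}_n)=\big((A-\alpha)\vec{x}_n\big)\otimes B\vec{y}_n+\alpha\,\vec{x}_n\otimes\big((B-\beta)\vec{y}_n\big)\longrightarrow\vec 0,
\]
so $\alpha\beta\in\sigma_{\mathrm{ap}}(A\otimes B)\subseteq\sigma(A\otimes B)$. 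Passing to adjoints via $(A\otimes B)^*=A^*\otimes B^*$ (Proposition \ref{Proposition:Adjoint}) yields the companion statement for the surjectivity spectra, i.e. products of points where $A-\alpha$ and $B-\beta$ both fail to be surjective also lie in $\sigma(A\otimes B)$. Since $\sigma(A)=\sigma_{\mathrm{ap}}(A)\cup\{\bar\mu:\mu\in\sigma_{\mathrm{ap}}(A^*)\}$ (and likewise for $B$), this covers every product of two points of the same ``type.''

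For the reverse inclusion I would embed $S$ and $T$ in a maximal abelian subalgebra $\mathcal{A}\subseteq\B(\h\otimes\h)$. Such an algebra is inverse-closed in $\B(\h\otimes\h)$, so $\sigma_{\mathcal{A}}(R)=\sigma(R)$ for every $R\in\mathcal{A}$; the Gelfand transform then gives $\sigma(A\otimes B)=\sigma_{\mathcal{A}}(ST)=\{\varphi(S)\varphi(T):\varphi\in\widehat{\mathcal{A}}\}$, and $\varphi(S)\in\sigma(S)=\sigma(A)$, $\varphi(T)\in\sigma(B)$ force $\sigma(A\otimes B)\subseteq\sigma(A)\sigma(B)$. (Equivalently, if $\lambda\notin\sigma(A)\sigma(B)$ then $h(z,w)=(zw-\lambda)^{-1}$ is holomorphic near $\sigma(S)\times\sigma(T)$, and the two-variable holomorphic functional calculus produces $h(S,T)$ as a bounded inverse of $A\otimes B-\lambda I$.)

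The step I expect to be the genuine obstacle is the remaining ``mixed'' part of $\sigma(A)\sigma(B)\subseteq\sigma(A\otimes B)$: a product $\alpha\beta$ where $\alpha$ is an interior point of $\sigma(A)$ at which $A-\alpha$ is surjective (hence not bounded below), while $\beta$ is an interior point of $\sigma(B)$ at which $B-\beta$ is bounded below (hence not surjective). Here tensoring approximate eigenvectors fails, because an approximate eigenvector $\vec x_n$ for $A$ does not interact with the co-eigenvector of $B$. One does get genuine eigenvectors in this case ($A\vec u=\alpha\vec u$, since surjective-but-not-bounded-below forces a kernel, and $B^*\vec w=\bar\beta\vec w$), and I would exploit the invariant subspace $\C\vec u\otimes\h$, on which $A\otimes B$ acts as $\alpha B$ with $\alpha\beta\in\alpha\,\sigma(B)$, together with the dual invariant subspace $\h\otimes\C\vec w$ for $(A\otimes B)^*$; but converting this into membership of $\alpha\beta$ in $\sigma(A\otimes B)$ requires more than a one-sided restriction. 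The clean resolution is the joint spectral mapping theorem for the commuting pair $(S,T)$, whose joint spectrum is exactly $\sigma(A)\times\sigma(B)$; applying $(z,w)\mapsto zw$ then gives both inclusions at once. This joint-spectrum computation is the technical heart, and I would expect essentially all of the difficulty to concentrate there, with the approximate-eigenvector and functional-calculus steps above being routine.
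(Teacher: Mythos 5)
First, a point of comparison: the paper does not prove this statement at all; it quotes it from Brown--Pearcy \cite{Brown&Pearcy}, and the underlying technique only appears later, when the paper adapts Brown--Pearcy's ``Proof 2'' to establish Theorem \ref{Theorem:SpectrumSame}. Measured against that technique, your proposal is correct in its routine parts --- the reduction to the commuting pair $S=A\otimes I$, $T=I\otimes B$ with $\sigma(S)=\sigma(A)$, the approximate-eigenvector argument, its adjoint version, and the maximal-abelian-subalgebra (equivalently, two-variable holomorphic calculus) proof of $\sigma(A\otimes B)\subseteq\sigma(A)\sigma(B)$ --- and you correctly located the crux in the mixed case. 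The gap is that your proposed resolution of that case does not work.

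The claim that the joint spectrum of $(S,T)$ ``is exactly $\sigma(A)\times\sigma(B)$'' is \emph{false} for the Harte (split) joint spectrum, and false in a way that kills the strategy: \emph{every} mixed point is Harte-nonsingular. Indeed, suppose $A-\alpha$ is surjective (so $(A-\alpha)^*$ is bounded below) and $B-\beta$ is bounded below. Then $X=\big((A-\alpha)^*(A-\alpha)\big)\otimes I+I\otimes\big((B-\beta)^*(B-\beta)\big)$ is invertible, since $X\geq I\otimes (B-\beta)^*(B-\beta)\geq\delta^2 I$, and $U_1=X^{-1}\big((A-\alpha)^*\otimes I\big)$, $U_2=X^{-1}\big(I\otimes(B-\beta)^*\big)$ give $U_1(S-\alpha)+U_2(T-\beta)=I$; symmetrically, $Y=\big((A-\alpha)(A-\alpha)^*\big)\otimes I+I\otimes\big((B-\beta)(B-\beta)^*\big)\geq\epsilon^2 I$ yields a right resolution $(S-\alpha)V_1+(T-\beta)V_2=I$. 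So no spectral-mapping argument routed through the Harte spectrum (or through any joint spectrum contained in it) can ever detect these points. Yet they must be detected: for $A=S^*$, $B=S$ and $\alpha,\beta\in\D$ (exactly your mixed configuration), $S^*\otimes S$ is unitarily equivalent to $\bigoplus_{n\geq 0}J_{n+1}$, a direct sum of nilpotent Jordan blocks of unbounded size acting on the anti-diagonals $\operatorname{span}\{\vec{e}_i\otimes\vec{e}_{n-i}\}$, and $\|(J_{n+1}-\lambda)^{-1}\|\geq|\lambda|^{-(n+1)}\to\infty$ for $|\lambda|<1$, so $\sigma(S^*\otimes S)\supseteq\D\ni\alpha\beta$. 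For the Taylor spectrum the product formula $\sigma_{\mathrm{T}}(A\otimes I,I\otimes B)=\sigma(A)\times\sigma(B)$ is in fact true, but that is itself a nontrivial theorem whose entire content is that middle exactness of the Koszul complex fails precisely at the mixed points --- i.e., exactly the case you are trying to dispatch --- so invoking it is deferring the whole difficulty to a black box at least as deep as the theorem being proved. Finally, for the Gelfand joint spectrum relative to a maximal abelian subalgebra, the spectral mapping theorem is automatic, but that joint spectrum depends on the chosen algebra and need not be the full product; knowing only that its coordinate projections are $\sigma(A)$ and $\sigma(B)$ recovers nothing beyond the easy inclusion you already have.

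What the actual proof does (Brown--Pearcy's ``Proof 2,'' the argument the paper reproduces for Theorem \ref{Theorem:SpectrumSame}) is elementary rather than joint-spectral: work with the open set $\Omega(A)=\sigma(A)\setminus\sigma_{\mathrm{ap}}(A)$, whose boundary lies in $\sigma_{\mathrm{ap}}(A)$ and whose points $\lambda$ satisfy $\overline{\lambda}\in\sigma_{\mathrm{p}}(A^*)$, and resolve the mixed case by perturbing the pair $(\alpha,\beta)$ (tracking the product) until both points are of the same type, then apply your two pure cases together with the closedness of $\sigma(A\otimes B)$. Some such continuity/perturbation argument is the missing ingredient in your write-up.
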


\begin{Proposition}\label{Proposition:SpectrumUnion}
Let $A,B \in \B(\h)$.  
\begin{enumerate}
\item $\sigma (\tfrac{1}{2}(A \otimes B + B \otimes A) ) = \sigma (A \odot B) \cup \sigma (A \wedge B)$.

\item $\sigma_{\textrm{p}} (\tfrac{1}{2}(A \otimes B + B \otimes A) ) = \sigma_{\textrm{p}} (A \odot B) \cup \sigma_{\textrm{p}} (A \wedge B)$.
\end{enumerate}
\end{Proposition}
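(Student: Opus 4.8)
The plan is to exploit the block-diagonal structure already recorded in \eqref{eq:UseLots}. By Proposition \ref{Proposition:TwoSum} we have the orthogonal decomposition $\h^{\otimes 2} = \h^{\odot 2} \oplus \h^{\wedge 2}$, and with respect to it \eqref{eq:UseLots} exhibits $T := \tfrac{1}{2}(A \otimes B + B \otimes A)$ as the direct sum
$$T = (A \odot B) \oplus (A \wedge B).$$
Both parts then reduce to the standard fact that the spectrum and the point spectrum of a direct sum of two bounded operators are the unions of the corresponding spectra. I would simply record these two elementary facts and apply them.

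For part (a), I would note that for each $\lambda \in \C$,
$$T - \lambda I = \big( (A \odot B) - \lambda I \big) \oplus \big( (A \wedge B) - \lambda I \big),$$
where each $\lambda I$ acts on the relevant summand. A direct sum $S_1 \oplus S_2$ of bounded operators is invertible exactly when both summands are invertible, since then $S_1^{-1} \oplus S_2^{-1}$ is a bounded two-sided inverse, and conversely invertibility of the sum forces invertibility of each block (the blocks do not interact). Hence $\lambda \notin \sigma(T)$ iff $\lambda \notin \sigma(A \odot B)$ and $\lambda \notin \sigma(A \wedge B)$; taking complements in $\C$ yields the claimed union.

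For part (b), I would argue at the level of eigenvectors. If a nonzero vector $\vec{x} \oplus \vec{y} \in \h^{\odot 2} \oplus \h^{\wedge 2}$ satisfies $T(\vec{x} \oplus \vec{y}) = \lambda (\vec{x} \oplus \vec{y})$, then $(A \odot B)\vec{x} = \lambda \vec{x}$ and $(A \wedge B)\vec{y} = \lambda \vec{y}$; since at least one of $\vec{x}, \vec{y}$ is nonzero, $\lambda$ is an eigenvalue of $A \odot B$ or of $A \wedge B$. Conversely, any eigenvector for one summand lifts to an eigenvector for $T$ by padding with $\vec{0}$ in the other coordinate. This establishes both inclusions and hence the equality.

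There is no serious obstacle here: the whole content is the block-diagonalization \eqref{eq:UseLots}, after which everything follows from elementary properties of orthogonal direct sums. The only point meriting a moment's care is the equivalence between invertibility of a direct sum and simultaneous invertibility of its summands, which is immediate once one checks that $S_1^{-1} \oplus S_2^{-1}$ is bounded.
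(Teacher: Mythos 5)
Your proposal is correct and follows exactly the paper's approach: the paper's proof is the single line ``This follows from the direct-sum decomposition \eqref{eq:UseLots},'' which is precisely the block-diagonalization $T = (A \odot B) \oplus (A \wedge B)$ that you use. You have merely written out the standard facts about spectra and point spectra of direct sums that the paper leaves implicit, and those details are correct.
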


\begin{proof}
This follows from the direct-sum decomposition \eqref{eq:UseLots}.
\end{proof}

\begin{Theorem}\label{Theorem:SpectrumSame}
Let $A \in \B(\h)$.  
\begin{enumerate}
\item $\sigma(A\odot I) = \frac{1}{2}(\sigma(A)+\sigma(A))$.
\item $\sigma(A\odot A) =  \sigma(A)\sigma(A)$.
\end{enumerate}
\end{Theorem}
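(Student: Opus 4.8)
The plan is to reduce both identities to the full tensor operator by means of the orthogonal direct sum $\tfrac12(A\otimes B+B\otimes A)=(A\odot B)\oplus(A\wedge B)$ of \eqref{eq:UseLots} together with the spectral union of Proposition \ref{Proposition:SpectrumUnion}. For (b), taking $B=A$ makes $\S_2(A,A)=A\otimes A$, so Proposition \ref{Proposition:SpectrumUnion} gives $\sigma(A\otimes A)=\sigma(A\odot A)\cup\sigma(A\wedge A)$, while Brown--Pearcy (Theorem \ref{Theorem:BrownPearcy}) gives $\sigma(A\otimes A)=\sigma(A)\sigma(A)$. For (a) the additive analogue $\sigma(A\otimes I+I\otimes A)=\sigma(A)+\sigma(A)$ (proved exactly like Theorem \ref{Theorem:BrownPearcy}, or by applying the spectral mapping theorem to the commuting pair $A\otimes I,\ I\otimes A$) yields $\sigma(A\odot I)\cup\sigma(A\wedge I)=\tfrac12(\sigma(A)+\sigma(A))$. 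In each case the inclusion $\subseteq$ is immediate, since $A\odot A$ and $A\odot I$ are restrictions of the respective full operators to a reducing subspace; everything rests on the reverse inclusion, that the symmetric summand already captures every product $\lambda\mu$ (respectively every average $\tfrac12(\lambda+\mu)$) with $\lambda,\mu\in\sigma(A)$.

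For the reverse inclusion I would build approximate eigenvectors. If $\lambda,\mu\in\sigma_{\textrm{ap}}(A)$ admit unit approximate eigenvectors $\vec x_n,\vec y_n$, then Lemma \ref{Lemma:VectorBounds} gives $\|\vec x_n\odot\vec y_n\|\geq\tfrac1{\sqrt2}$, so these symmetric tensors stay bounded away from $\vec 0$; since $(A\odot A)(\vec x_n\odot\vec y_n)=A\vec x_n\odot A\vec y_n$ and $A\vec x_n\odot A\vec y_n-\lambda\mu\,\vec x_n\odot\vec y_n=(A-\lambda)\vec x_n\odot A\vec y_n+\lambda\,\vec x_n\odot(A-\mu)\vec y_n\to\vec 0$, we conclude $\lambda\mu\in\sigma_{\textrm{ap}}(A\odot A)$, and the same vectors give $\tfrac12(\lambda+\mu)\in\sigma_{\textrm{ap}}(A\odot I)$. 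The case $\lambda,\mu\in\sigma_{\textrm{su}}(A)$, where $\sigma_{\textrm{su}}(A):=\{\nu:\overline\nu\in\sigma_{\textrm{ap}}(A^*)\}$ is the surjectivity spectrum, follows by duality: Proposition \ref{Proposition:Adjoint} gives $(A\odot A)^*=A^*\odot A^*$, so the previous step applied to $A^*$ together with $\sigma(A^*)=\overline{\sigma(A)}$ returns the desired products.

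The main obstacle is the mixed case $\lambda\in\sigma_{\textrm{ap}}(A)\setminus\sigma_{\textrm{su}}(A)$ and $\mu\in\sigma_{\textrm{su}}(A)\setminus\sigma_{\textrm{ap}}(A)$. Here $\lambda$ is an honest eigenvalue of $A$ and $\overline\mu$ an honest eigenvalue of $A^*$, yet for the corresponding eigenvectors $\vec x,\vec y$ one computes $(A\odot A-\lambda\mu)(\vec x\odot\vec y)=\lambda\,\vec x\odot(A-\mu)\vec y$ and $(A^*\odot A^*-\overline{\lambda\mu})(\vec x\odot\vec y)=\overline\mu\,(A^*-\overline\lambda)\vec x\odot\vec y$, neither of which is small, so $\vec x\odot\vec y$ is an approximate eigenvector of neither $A\odot A$ nor its adjoint and the elementary construction genuinely fails. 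As partial progress one can capture the boundary for free: because $(z,w)\mapsto zw$ is open away from the axes and $\partial\sigma(A)\subseteq\sigma_{\textrm{ap}}(A)$, one has $\partial(\sigma(A)\sigma(A))\setminus\{0\}\subseteq(\partial\sigma(A))(\partial\sigma(A))\subseteq\sigma_{\textrm{ap}}(A)\sigma_{\textrm{ap}}(A)\subseteq\sigma(A\odot A)$, so only interior mixed products remain. I expect the decisive step to be an argument that these interior products also lie in the symmetric summand — most cleanly by transporting the nontrivial direction of the Brown--Pearcy proof to the reducing subspace $\h\odot\h$ directly, rather than by any further manipulation of simple symmetric tensors — and this is the step I anticipate will demand the most work.
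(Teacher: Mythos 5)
Your framework coincides with the paper's: the inclusion $\subseteq$ via Proposition \ref{Proposition:SpectrumUnion} plus Brown--Pearcy (resp.\ the additive tensor-sum spectrum, which the paper gets from Schechter), the case $\lambda,\mu\in\sigma_{\textrm{ap}}(A)$ via approximate eigenvectors of the form $\vec{x}_n\odot\vec{y}_n$ kept away from $\vec{0}$ by Lemma \ref{Lemma:VectorBounds}, and the case $\overline\lambda,\overline\mu\in\sigma_{\textrm{ap}}(A^*)$ by duality through Proposition \ref{Proposition:Adjoint}; your boundary observation $\partial(\sigma(A)\sigma(A))\setminus\{0\}\subseteq\sigma_{\textrm{ap}}(A)\sigma_{\textrm{ap}}(A)$ is also correct. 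But the mixed case --- which you yourself flag as ``the step I anticipate will demand the most work'' --- is exactly the crux of the theorem, and your proposal contains no argument for it. This is a genuine gap: knowing that $\sigma(A\odot A)$ contains $\sigma_{\textrm{ap}}(A)\sigma_{\textrm{ap}}(A)$, its adjoint counterpart, and $\partial(\sigma(A)\sigma(A))$, while being contained in $\sigma(A)\sigma(A)$, does not force it to contain the interior mixed products (nothing yet excludes $\sigma(A\odot A)$ from being, say, only that boundary together with those pieces).

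The paper closes this gap with an elementary perturbation device taken from Brown--Pearcy's second proof, not by ``transporting'' their argument to $\h\odot\h$. Write $\Omega(A)=\sigma(A)\setminus\sigma_{\textrm{ap}}(A)$: this is a bounded open set, every $\nu\in\Omega(A)$ satisfies $\overline\nu\in\sigma_{\textrm{p}}(A^*)$, and $\partial\Omega(A)\subseteq\sigma_{\textrm{ap}}(A)$. In the genuinely mixed case for (a) ($\lambda\in\sigma_{\textrm{ap}}(A)$ with $\overline\lambda\in\Omega(A^*)$, and $\mu\in\Omega(A)$), one slides the two points in opposite directions along the real axis, $\overline\lambda-t$ and $\mu+t$, which preserves the sum $\lambda+\mu$; boundedness of the $\Omega$-sets gives a finite first exit time $\tau$, and openness gives membership for all $0\leq t<\tau$. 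Whichever point exits first lands in an approximate point spectrum, while the point still inside $\Omega(A^*)$ (resp.\ $\Omega(A)$) converts into an eigenvalue, hence approximate eigenvalue, of $A$ (resp.\ of $A^*$); every sub-case thus reduces to the two cases you already settled, with a final appeal to the closedness of $\sigma(A\odot I)$ when both points exit simultaneously. For (b) the same scheme runs with a product-preserving perturbation in place of the sum-preserving one. Without this step (or an equivalent device) your argument proves strictly less than the stated equality.
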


\begin{proof}
(a) First observe that
\begin{align*}
\sigma(A\odot I) 
&\subseteq \sigma\big(\tfrac{1}{2} (A\otimes I+I \otimes A) \big) && (\text{Lemma \ref{Proposition:SpectrumUnion}})\\
&= \tfrac{1}{2}(\sigma(A)+\sigma(A)) && (\text{by \cite[Theorem 2.1]{Schechter}}).
\end{align*}

Let $\lambda,\mu \in \sigma_{\textrm{ap}}(A)$.  There are sequences $\{\vec{u}_i\}_{i=1}^{\infty}$ and $\{\vec{v}_i\}_{i=1}^{\infty}$ of unit vectors such that $\norm{(A-\lambda I)\vec{u}_i}\to 0$ and $\norm{(A-\mu I)\vec{v}_i}\to 0$.  Then
Lemma \ref{Lemma:VectorBounds} ensures that
\begin{align}
&\bigg\|\bigg(A\odot I- \Big(\frac{\lambda}{2}+\frac{\mu}{2}\Big)(I\odot I)\bigg)\bigg(\frac{\vec{u}_i\odot \vec{v}_i}{\|\vec{u}_i\odot \vec{v}_i \|}\bigg)\bigg\|\\
&\quad \leq \sqrt{2}\bigg\|\bigg(A\odot I- \Big(\frac{\lambda}{2}+\frac{\mu}{2}\Big)(I\odot I)\bigg) (\vec{u}_i\odot \vec{v}_i) \bigg\|
\\
&\quad= \frac{1}{2\sqrt{2}}\big\| \big(A\otimes I+I \otimes A-(\lambda+\mu)(I\otimes I)\big)(\vec{u}_i\otimes \vec{v}_i+\vec{v}_i\otimes \vec{u}_i) \big\|\\
&\quad= \frac{1}{2\sqrt{2}} \big\|A\vec{u}_i \otimes \vec{v}_i + \vec{u}_i \otimes A \vec{v}_i - \lambda \vec{u}_i \otimes \vec{v}_i - \vec{u}_i \otimes  \mu\vec{v}_i \\
&\qquad\qquad\qquad+ A \vec{v}_i \otimes \vec{u}_i + \vec{v}_i \otimes A \vec{u}_i -  \vec{v}_i \otimes  \lambda\vec{u}_i - \mu \vec{v}_i \otimes \vec{u}_i  \big\| \\
&\quad= \frac{1}{2\sqrt{2}} \big\| (A-\lambda I)\vec{u}_i \otimes \vec{v}_i + \vec{v}_i \otimes (A-\lambda I)\vec{u}_i \\
&\qquad\qquad\qquad + \vec{u}_i\otimes (A-\mu I)\vec{v}_i+ (A-\mu I) (\vec{v}_i \otimes \vec{u}_i) \big\| \\
&\quad\leq\frac{1}{2\sqrt{2}}  \big( 2\norm{(A-\lambda I)\vec{u}_i}\norm{\vec{v}_i} + 2 \norm{(A-\mu I)\vec{v}_i} \norm{\vec{u}_i} \big)\\
&\quad= \frac{1}{\sqrt{2}} \norm{(A-\lambda I)\vec{u}_i} + \frac{1}{\sqrt{2}}\norm{(A-\mu I)\vec{v}_i} \to 0.
\end{align}
Thus, $\frac{1}{2}(\lambda+\mu)\in \sigma_{\textrm{ap}}(A\odot I)$ and hence
\begin{equation}\label{eq:ApproxPointSum}
    \sigma_{\textrm{ap}}(A\odot I) \supseteq \tfrac{1}{2}(\sigma_{\textrm{ap}}(A)+\sigma_{\textrm{ap}}(A)).
\end{equation}

Recall that $\Omega(A)=\sigma(A)\setminus \sigma_{\textrm{ap}}(A)$ is a bounded open set.
Furthermore, \cite[Page 164]{Brown&Pearcy} shows that $\lambda \in \Omega(A)$ implies $\overline{\lambda}\in \sigma_{\mathrm{p}}(A^*)$. Since $\sigma(A)$ is closed and $\Omega(A) \subseteq \sigma(A)$, the boundary of $\Omega(A)$ is contained in $ \sigma(A) \setminus \Omega(A) = \sigma_{\textrm{ap}}(A)$.

Let $\lambda,\mu \in \sigma(A)$. Following \cite[Proof 2]{Brown&Pearcy}, we examine four special cases.
\begin{itemize}
    \item[(i)] If $\lambda,\mu \in \sigma_{\textrm{ap}}(A)$, then \eqref{eq:ApproxPointSum} ensures that 
    	$\frac{1}{2} (\lambda+\mu) \in \sigma_{\textrm{ap}}(A\odot I)\subseteq \sigma(A\odot I)$.

    \item[(ii)] If $\lambda,\mu \in \Omega(A)$, then    
    $\overline{\lambda},\overline{\mu} \in \sigma_{\textrm{p}}(A^*)\subseteq \sigma_{\textrm{ap}}(A^*)$. Then (i) ensures that 
    \begin{equation*}
        \tfrac{1}{2}(\overline{\lambda+\mu})\in \sigma_{\textrm{ap}}(A^* \odot I)
        =\sigma_{\textrm{ap}}((A\odot I)^*)\subseteq \sigma ((A\odot I)^*),
    \end{equation*}
    so $\frac{1}{2} (\lambda+\mu) \in \sigma(A\odot I)$.

    \item[(iii)] Suppose that $\lambda \in \sigma_{\textrm{ap}}(A)$ and $\mu\in \Omega(A)$.
    	Then $\overline{\lambda} \in \sigma(A^*)$ and $\overline{\mu}\in \sigma_{\textrm{ap}}(A^*)$. 
	If $\overline{\lambda}\in \sigma_{\textrm{ap}}(A^*)$, then (a) ensures that
	\begin{equation*}
            \tfrac{1}{2}(\overline{\lambda+\mu})\in \sigma_{\textrm{ap}}(A^*\odot I)\subseteq \sigma((A\odot I)^*),
            \quad \text{so $\tfrac{1}{2}(\lambda+\mu) \in \sigma(A\odot I)$}.
	\end{equation*}
	Suppose instead that $\overline{\lambda} \in \Omega(A^*)$.
	The openness of $\Omega(A)$ and $\Omega(A^*)$
            provide $\tau >0$ such that $\overline{\lambda}-t \in \Omega(A^*)$ and $\mu+t \in \Omega(A)$ for $0\leq t <\tau $.
            \begin{itemize}
                \item If $\overline{\lambda}-\tau \in \Omega(A^*)$ and $\mu+\tau  \in \sigma_{\textrm{ap}}(A)$,
                then $\lambda-\tau =\overline{\overline{\lambda}-\tau }\in \sigma_{\textrm{ap}}(A)$.
                Then (a) ensures that
                \begin{equation*}
                    \tfrac{1}{2}(\lambda+\mu) = \tfrac{1}{2}(\lambda-\tau +\mu+\tau  )\in \sigma_{\textrm{ap}}(A\odot I)\subseteq \sigma (A\odot I). 
                \end{equation*}

                \item If $\overline{\lambda}-\tau \in \sigma_{\textrm{ap}}(A^*)$ and $\mu+\tau  \in \Omega(A)$, this case is
                analogous to the previous one.

                \item Suppose that $\overline{\lambda}-\tau \in \sigma_{\textrm{ap}}(A^*)$ and $\mu+\tau  \in \sigma_{\textrm{ap}}(A)$.
                	 If $t_n \to \tau $ and $0 <t_n <\tau $, then $\overline{\lambda}-t_n \in \Omega(A^*)$ 
            	 and hence $\lambda-t_n \in \sigma_{\textrm{ap}}(A)$. Thus, 
            	 \begin{equation*}
                    	 \tfrac{1}{2}(\lambda-t_n+\mu+\tau )\in \sigma(A\odot I).
            	 \end{equation*}
                Since $\sigma(A \odot I)$ is closed, $\frac{1}{2}(\lambda+\mu) \in \sigma(A\odot I)$. 
                \end{itemize}

    \item[(iv)] The case $\lambda \in \Omega(A)$ and $\mu\in \sigma_{\textrm{ap}}(A)$ is analogous to (iii).
\end{itemize}
    In all cases
    $\frac{1}{2}(\lambda+\mu)\in \sigma(A \odot I)$,
so $\sigma(A\odot I)=\frac{1}{2}(\sigma(A)+\sigma(A))$.

\medskip\noindent(b) The proof is similar to that of (a), so we sketch the details.  
Lemma \ref{Proposition:SpectrumUnion} and Theorem \ref{Theorem:BrownPearcy} yield
$\sigma( A \odot A ) \subseteq \sigma \left( A \otimes A \right) =  \sigma(A) \sigma(A)$.
If $\lambda,\mu \in \sigma_{\textrm{ap}}(A)$, then an argument similar to that of (a) ensures that
$\lambda \mu \in \sigma_{\textrm{ap}}(A\odot A)$.  As above, we can follow \cite[Proof 2]{Brown&Pearcy}
and use a case-by-case analysis to show that $\lambda \mu\in \sigma(A \odot A)$, so that
$\sigma(A)\sigma(A) \subseteq \sigma(A \odot A)$.
\end{proof}

\section{Diagonal operators}\label{Section:Diagonal}
Since diagonal operators are among the most elementary operators one encounters 
in the infinite-dimensional setting \cite[Ch.~2]{OTBE}, it makes sense to consider their symmetric tensor products.
Let $\vec{e}_1,\vec{e}_2,\ldots$ be an orthonormal basis for $\h$ and suppose that
$L,M \in \B(\h)$ satisfy $L\vec{e}_i = \lambda_i \vec{e}_i$ and $M\vec{e}_i = \mu_i \vec{e}_i$ for $i \geq 1$.  For $i,j\geq 1$,
\begin{align}
&(L \odot M)(\vec{e}_i \odot \vec{e}_j) 
= \tfrac{1}{4}(L \otimes M + M \otimes L)( \vec{e}_i \otimes \vec{e}_j + \vec{e}_j \otimes \vec{e}_i)\\ 
&\qquad= \tfrac{1}{4}(L \vec{e}_i \otimes M  \vec{e}_j + M\vec{e}_i \otimes L\vec{e}_j+  L\vec{e}_j \otimes M\vec{e}_i + M\vec{e}_j \otimes L\vec{e}_i)\\
&\qquad= \tfrac{1}{4}(\lambda_i\vec{e}_i \otimes \mu_j \vec{e}_j+\mu_i \vec{e}_i \otimes \lambda_j\vec{e}_j+  \lambda_j\vec{e}_j \otimes \mu_i\vec{e}_i + \mu_j\vec{e}_j \otimes \lambda_i\vec{e}_i)\\
&\qquad= \tfrac{1}{2}(\lambda_i\mu_j+\lambda_j\mu_i) ( \vec{e}_i \odot \vec{e}_j ).\label{eq:Diagonal}
\end{align}
Thus, $L \odot M$ is a diagonal operator with  
\begin{equation}\label{eq:DiagonalSpectrum}
\sigma_{\textrm{p}}(L \odot M) 
= \left\{\tfrac{1}{2}(\lambda_i\mu_j+\lambda_j\mu_i ): i,j \geq 1 \right\}
\quad \text{and} \quad
\sigma(L \odot M)  = \sigma_{\textrm{p}}(L \odot M)^-.
\end{equation}

For symmetric products of diagonal operators, we can improve upon Theorem \ref{Theorem:Norm}.a.

\begin{Proposition}\label{Proposition:Diagonals}
Let $L,M$ be diagonal operators as above.
Then $ \|L\| \| M \| (\sqrt{2}-1) \leqslant \| L \odot M \| \leq \|L\| \|M\|$ and these inequalities are sharp.
\end{Proposition}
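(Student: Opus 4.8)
The plan is to leverage the explicit diagonalization already recorded in \eqref{eq:Diagonal}. Since $L \odot M$ is diagonal with respect to the orthogonal basis $\{\vec{e}_i \odot \vec{e}_j : i \leq j\}$ of $\h \odot \h$, with $\vec{e}_i \odot \vec{e}_j$ an eigenvector of eigenvalue $\tfrac{1}{2}(\lambda_i\mu_j + \lambda_j\mu_i)$, its norm is the supremum of the moduli of the diagonal entries:
\[
\|L \odot M\| = \sup_{i,j \geq 1}\Big|\tfrac{1}{2}(\lambda_i\mu_j + \lambda_j\mu_i)\Big|.
\]
The upper bound $\|L \odot M\| \leq \|L\|\|M\|$ is immediate from Proposition \ref{Proposition:BasicNormInequality}.a (or from the displayed formula with the triangle inequality and $|\lambda_i|\leq\|L\|$, $|\mu_j|\leq\|M\|$), and is sharp because $\|L \odot L\| = \|L\|^2$ by Proposition \ref{Proposition:BasicNormInequality}.b. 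Thus the substance of the proof lies entirely in the lower bound.

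For the lower bound I would first reduce to the normalized case $\|L\| = \|M\| = 1$, the case of a zero factor being trivial, so that the goal becomes $\|L \odot M\| \geq \sqrt{2} - 1$. Fix $\epsilon \in (0,1)$ and choose indices $p, q$ with $|\lambda_p| > 1 - \epsilon$ and $|\mu_q| > 1 - \epsilon$, nearly realizing $\|L\|$ and $\|M\|$. I then compare three diagonal entries of $L \odot M$: the two \emph{pure} entries $\lambda_p\mu_p$ and $\lambda_q\mu_q$ at $(p,p)$ and $(q,q)$, and the \emph{mixed} entry $\tfrac{1}{2}(\lambda_p\mu_q + \lambda_q\mu_p)$ at $(p,q)$. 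The decisive dichotomy is whether $\max(|\mu_p|,|\lambda_q|)$ exceeds the threshold $\sqrt{2}-1$. If it does, then one of the pure entries already has modulus at least $(1-\epsilon)(\sqrt{2}-1)$. If it does not, then $|\mu_p|,|\lambda_q| \leq \sqrt{2}-1$, so $|\lambda_q\mu_p| \leq (\sqrt{2}-1)^2 = 3 - 2\sqrt{2}$, and the reverse triangle inequality gives
\[
\Big|\tfrac{1}{2}(\lambda_p\mu_q + \lambda_q\mu_p)\Big| \geq \tfrac{1}{2}\big(|\lambda_p||\mu_q| - |\lambda_q||\mu_p|\big) > \tfrac{1}{2}\big((1-\epsilon)^2 - (3 - 2\sqrt{2})\big).
\]
In either case $\|L \odot M\|$ exceeds the minimum of $(1-\epsilon)(\sqrt{2}-1)$ and $\tfrac{1}{2}((1-\epsilon)^2 - (3-2\sqrt{2}))$; letting $\epsilon \to 0^+$, both quantities tend to $\sqrt{2}-1$, whence $\|L \odot M\| \geq \sqrt{2}-1$. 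Undoing the normalization yields $(\sqrt{2}-1)\|L\|\|M\| \leq \|L \odot M\|$.

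I expect the main obstacle to be pinning down the sharp constant: the value $\sqrt{2}-1$ is precisely the balance point at which the two branches of the dichotomy coincide, so the threshold must be chosen exactly for the estimates to line up. The $\epsilon$-bookkeeping is needed only because the suprema defining $\|L\|$ and $\|M\|$ need not be attained; it is routine but must be carried through. To establish sharpness of the lower bound I would then exhibit a finite-dimensional example saturating all three entries at once: on $\C^2$ take $L = \diag(1, \sqrt{2}-1)$ and $M = \diag(-(\sqrt{2}-1), 1)$, so that $\|L\| = \|M\| = 1$ and the entries at $(1,1)$, $(2,2)$, and $(1,2)$ all have modulus exactly $\sqrt{2}-1$; reading off the matrix via \eqref{eq:2x2to3x3} gives $\|L \odot M\| = \sqrt{2}-1 = (\sqrt{2}-1)\|L\|\|M\|$.
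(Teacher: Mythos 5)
Your proof is correct and follows essentially the same route as the paper: both rest on the diagonal-entry formula for $\|L \odot M\|$, the same dichotomy (a pure entry $\lambda_p\mu_p$ or $\lambda_q\mu_q$ is large, versus the mixed entry being large by the reverse triangle inequality, with the two branches balancing exactly at $\sqrt{2}-1$), and the identical extremal pair $L = \diag(1,\sqrt{2}-1)$, $M = \diag(-(\sqrt{2}-1),1)$. The only real difference is presentational: the paper reduces to the $2\times 2$ case, where the norms are attained, and computes $\inf_{0\leq s\leq 1}\max\{s,\tfrac{1}{2}(1-s^2)\} = \sqrt{2}-1$, whereas you carry the $\epsilon$-bookkeeping explicitly, which makes rigorous the non-attainment issue that the paper's reduction to $2\times 2$ matrices leaves implicit.
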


\begin{proof}
The computation \eqref{eq:Diagonal} shows that it suffices to prove the result for $2 \times 2$ diagonal matrices.
Let $L = \diag(\lambda_1,\lambda_2)$ and $M = \diag(\mu_1,\mu_2)$. 
By linearity we may assume $\norm{L}=\max\{ |\lambda_1|,|\lambda_2| \}=1$ and 
$\norm{M}=\max\{ |\mu_1|,|\mu_2|\}=1$,
Consider
$L \odot M$, which by \eqref{eq:2x2to3x3} we identify with 
$\diag( \lambda_1 \mu_1 , \frac{\lambda_1 \mu_2 + \lambda_2 \mu_1}{2}, \lambda_2 \mu_2)$.  Then
\begin{equation}\label{eq:SameReasoning}
\norm{L \odot M}=\max\big\{ | \lambda_1 \mu_1|,\tfrac{1}{2}|\lambda_1 \mu_2 + \lambda_2 \mu_1|,|\lambda_2 \mu_2|\big\}
\end{equation}
and one of the following holds:
\begin{enumerate}
    \item $|\lambda_1|=|\mu_1|=1$ or $|\lambda_2|=|\mu_2|=1$. 
    \item $|\lambda_1|=|\mu_2|=1$ or $|\lambda_2|=|\mu_1|=1$. 
\end{enumerate}
If (a) holds, then $\norm{L \odot M} \geq \max\{ |\lambda_1\mu_1|,|\lambda_2\mu_2| \}=1$. 
If (b) holds, then without loss of generality assume that $|\lambda_1|=|\mu_2|=1$. From \eqref{eq:SameReasoning},
\begin{align}
    \norm{L \odot M} 
    &\geqslant \inf_{|\mu_1|, |\lambda_2| \leqslant 1}  \max \big\{|\mu_1|, \tfrac{1}{2}|1+\lambda_2  \mu_1|,|\lambda_2|\big\}  \\
    &\geqslant \inf_{0 \leqslant s\leqslant 1}  \max \{s, \tfrac{1}{2}(1-  s^2)\}  
    = \sqrt{2} - 1.
\end{align}
The lower bound is attained by $L= \big[\begin{smallmatrix}
 1 & 0 \\ 0  & \sqrt{2}-1\end{smallmatrix}\big]$ and $M= \big[ \begin{smallmatrix}-\sqrt{2}+1& 0 \\ 0&1\end{smallmatrix}\big]$. 
The upper bound is attained by $L=M=I$.
\end{proof}

\begin{Proposition}
\begin{enumerate}
\item There exists a selfadjoint diagonal operator $D$ such that $\sigma(D)$ has measure zero in $\R$
and $\sigma(D\odot D)$ has positive measure in $\R$.

\item There exists a diagonal operator $D$ such that $\sigma(D)$ has planar Lebesgue measure zero
and $\sigma(D\odot D)$ has positive planar Lebesgue measure.
\end{enumerate}
\end{Proposition}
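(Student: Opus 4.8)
The plan is to reduce both statements to a purely set-theoretic question and then realize the relevant set as the spectrum of a diagonal operator. Recall from \eqref{eq:DiagonalSpectrum} (equivalently, from Theorem \ref{Theorem:SpectrumSame}.b) that if $D$ is the diagonal operator with $D\vec{e}_i = \lambda_i\vec{e}_i$ and $S := \sigma(D) = \overline{\{\lambda_i : i \geq 1\}}$, then $\sigma(D \odot D) = \sigma(D)\sigma(D) = S \cdot S$, the compact set $\{st : s,t \in S\}$. Hence it suffices to produce a compact set $S$ of measure zero for which $S \cdot S$ has positive measure, and then to take $\{\lambda_i\}$ to be any countable dense subset of $S$; the spectrum of a diagonal operator being the closure of its diagonal entries, we get $\sigma(D) = S$ and $\sigma(D \odot D) = S \cdot S$. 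The operator $D$ is selfadjoint exactly when $S \subseteq \R$, which is what distinguishes the two parts.

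For (a) I would convert products into sums via the exponential. Let $\mathcal{K} \subseteq [0,1]$ be the middle-thirds Cantor set, which has Lebesgue measure zero and satisfies the classical identity $\mathcal{K} + \mathcal{K} = [0,2]$. Put $S := \exp(\mathcal{K}) \subseteq [1,e]$. Since $\exp$ is Lipschitz on $[0,1]$, it carries the null set $\mathcal{K}$ to a null set, so $S$ is compact with Lebesgue measure zero. On the other hand, $S \cdot S = \exp(\mathcal{K} + \mathcal{K}) = \exp([0,2]) = [1,e^2]$ has positive measure. Because $S \subseteq \R$, the associated diagonal operator $D$ is selfadjoint, and $\sigma(D) = S$, $\sigma(D \odot D) = [1,e^2]$, which proves (a).

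For (b) I would run the same construction one dimension higher, using the complex exponential. Identifying $\C$ with $\R^2$, set $T := \mathcal{K} \times \mathcal{K}$, a compact planar null set with $T + T = (\mathcal{K} + \mathcal{K}) \times (\mathcal{K} + \mathcal{K}) = [0,2]^2$, and let $S := \exp(T) = \{e^{x+iy} : x,y \in \mathcal{K}\}$. As $\exp$ is smooth it maps the planar null set $T$ to a planar null set, so $S$ is compact with planar measure zero, while $S \cdot S = \exp(T+T) = \exp([0,2]^2)$. The only genuine obstacle is to confirm that this last image has positive planar measure: writing $w = x + iy$, the map $w \mapsto e^w$ has nonvanishing Jacobian determinant $e^{2x}$ and is injective on $[0,2]^2$, since there the imaginary parts lie in $[0,2] \subseteq [0,2\pi)$ and so cannot differ by a nonzero multiple of $2\pi$. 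Thus $\exp$ is a diffeomorphism of $[0,2]^2$ onto its image, and the change-of-variables formula gives that the image has positive area. The diagonal operator attached to a countable dense subset of $S$ then has (non-real) eigenvalues with $\sigma(D) = S$ of planar measure zero and $\sigma(D \odot D) = S \cdot S$ of positive planar measure, establishing (b).
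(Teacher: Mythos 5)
Your proposal is correct and follows essentially the same route as the paper: exponentiate the Cantor set $\mathcal{K}$ to convert the identity $\mathcal{K}+\mathcal{K}=[0,2]$ into a multiplicative statement, realize the resulting null set as the spectrum of a diagonal operator via a countable dense subset of eigenvalues, and invoke Theorem \ref{Theorem:SpectrumSame}.b to get $\sigma(D\odot D)=\sigma(D)\sigma(D)$. The only (cosmetic) difference is in part (b), where you restrict the imaginary parts to $\mathcal{K}$ and obtain the annular sector $\exp([0,2]^2)$ (justified by your injectivity/Jacobian argument), whereas the paper lets the arguments range over all of $[0,2\pi)$ and obtains the full annulus of radii $1$ and $e^2$; both verifications are equally valid.
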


\begin{proof}
\noindent(a) Let $\CC$ denote the Cantor set, which has measure zero. The exponential function is differentiable, so by \cite[Lemma 7.25]{RudinRCA}
$\{ e^\mu : \mu \in \CC\cap \Q \}^- = \{e^c : c \in \CC\}$ has measure zero in $\R$.
Let $D$ be a diagonal operator with point spectrum $\{ e^{\lambda} : \lambda \in \CC \cap \Q\}$,
so that $\sigma(D) = \{e^c : c \in \mathscr{C}\}$ has measure zero.  
Since $\mathscr{C}+\mathscr{C}=[0,2]$,
Theorem \ref{Theorem:SpectrumSame}.b ensures that
$\sigma(D \odot D) = \sigma(D)\sigma(D) =  \{ e^{c+d} : c,d \in \mathscr{C}\} = [1,e^2]$
has positive measure in $\R$.

\medskip\noindent(b) Let $D$ be a diagonal operator with point spectrum
$\{ e^{\lambda + i \mu} : \lambda \in \CC \cap \Q, \mu \in \Q \cap [0,2\pi)\}$.
Then $\sigma(D)$ has planar measure zero, but an argument similar to that in (a)
ensures that $\sigma(D\odot D)$ is the annulus centered 
at $0$ with radii $1$ and $e^2$, which has positive measure.
\end{proof} 

Below the brackets $\{\hspace{-3.5pt}\{$ and $\}\hspace{-3.5pt}\}$ indicate a \emph{multiset};
that is, a set that permits multiplicity.

\begin{Proposition}\label{Proposition:MultiDiagSpec}
Let $A_1, A_2,  \ldots, A_n \in \mathcal{B} (\h)$ be commuting diagonal operators with
$\sigma_{\textrm{p}}(A_i) = \{\hspace{-3.5pt}\{ \lambda_1^{(i)}, \lambda_2^{(i)},\ldots \}\hspace{-3.5pt}\}$ allowing for repetition. Then
\begin{equation*}\small
\sigma_p(A_1 \odot A_2 \odot \cdots \odot A_n) 
=  \bigg\{\!\!\!\bigg\{\frac{1}{n!} \sum_{\pi \in \Sigma_n }\lambda_{i_1}^{(\pi(1))} \lambda_{i_2}^{(\pi(2))}\cdots \lambda_{i_n}^{(\pi(n))}  : 
 i_1 \leq i_2 \leq \ldots \leq i_n   \bigg\}\!\!\!\bigg\}
\end{equation*}
and $\sigma(A_1 \odot A_2 \odot \cdots \odot A_n) = \sigma_p(A_1 \odot A_2 \odot \cdots \odot A_n)^-$.
\end{Proposition}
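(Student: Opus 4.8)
The plan is to show that $A_1 \odot A_2 \odot \cdots \odot A_n$ is itself a diagonal operator, now with respect to the orthogonal basis of simple symmetric tensors supplied by Proposition \ref{Proposition:ONB}, and then to read off its point spectrum directly from the diagonal entries and its spectrum as their closure. Since the $A_i$ are commuting diagonal (hence normal) operators, they can be simultaneously diagonalized: there is an orthonormal basis $\vec{e}_1, \vec{e}_2, \ldots$ of $\h$ with $A_k \vec{e}_i = \lambda_i^{(k)} \vec{e}_i$ for all $i,k$, so that $\{\hspace{-3.5pt}\{ \lambda_1^{(k)}, \lambda_2^{(k)}, \ldots \}\hspace{-3.5pt}\} = \sigma_{\textrm{p}}(A_k)$ counted with the multiplicity inherited from the basis. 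Establishing that commutativity genuinely delivers one simultaneous eigenbasis (rather than $n$ separate ones) is the single structural hypothesis the whole argument rests on.

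First I would compute the action of $T := \S_n(A_1, A_2, \ldots, A_n)$ on a simple symmetric tensor $\vec{e}_{i_1} \odot \vec{e}_{i_2} \odot \cdots \odot \vec{e}_{i_n}$. The computation already carried out in the proof of Proposition \ref{Proposition:Invariant} gives
\[
T(\vec{e}_{i_1} \odot \cdots \odot \vec{e}_{i_n}) = \frac{1}{n!} \sum_{\pi \in \Sigma_n} A_{\pi(1)} \vec{e}_{i_1} \odot A_{\pi(2)} \vec{e}_{i_2} \odot \cdots \odot A_{\pi(n)} \vec{e}_{i_n}.
\]
Because $A_{\pi(k)} \vec{e}_{i_k} = \lambda_{i_k}^{(\pi(k))} \vec{e}_{i_k}$ and $\odot$ is multilinear, every summand is a scalar multiple of $\vec{e}_{i_1} \odot \cdots \odot \vec{e}_{i_n}$; pulling the scalars out exhibits $\vec{e}_{i_1} \odot \cdots \odot \vec{e}_{i_n}$ as an eigenvector with eigenvalue $\frac{1}{n!} \sum_{\pi \in \Sigma_n} \lambda_{i_1}^{(\pi(1))} \lambda_{i_2}^{(\pi(2))} \cdots \lambda_{i_n}^{(\pi(n))}$. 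Restricting $T$ to $\h^{\odot n}$ yields $A_1 \odot \cdots \odot A_n$, which is bounded by Proposition \ref{Proposition:BasicNormInequality}, and Proposition \ref{Proposition:ONB} tells us that the vectors $\vec{e}_{i_1} \odot \cdots \odot \vec{e}_{i_n}$ with $i_1 \leq i_2 \leq \cdots \leq i_n$ form an orthogonal basis of $\h^{\odot n}$. Normalizing them produces an orthonormal eigenbasis, so $A_1 \odot \cdots \odot A_n$ is diagonal, its diagonal entries indexed exactly by the tuples $i_1 \leq i_2 \leq \cdots \leq i_n$.

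It then remains to invoke the elementary spectral theory of bounded diagonal operators: the point spectrum is precisely the set of diagonal entries and the full spectrum is their closure, since $D - \zeta I$ is boundedly invertible exactly when $\inf_j |d_j - \zeta| > 0$. Reading these entries off the eigenvalue formula above, with multiplicity equal to the number of admissible tuples producing a given value, yields the claimed multiset description of $\sigma_{\textrm{p}}(A_1 \odot \cdots \odot A_n)$, and hence $\sigma(A_1 \odot \cdots \odot A_n) = \sigma_{\textrm{p}}(A_1 \odot \cdots \odot A_n)^-$. I expect the main obstacle to be purely bookkeeping: tracking multiplicities so that distinct tuples $(i_1, \ldots, i_n)$ collapsing to the same eigenvalue are nonetheless counted separately in the multiset, and confirming that the passage to the common eigenbasis is legitimate. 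Once the eigenvector computation is in hand, neither presents a real difficulty.
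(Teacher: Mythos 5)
Your proposal is correct and is essentially the paper's own (implicit) argument: the paper states this proposition without a separate proof, as the direct generalization of its $n=2$ computation \eqref{eq:Diagonal}--\eqref{eq:DiagonalSpectrum}, which---exactly as you do---shows that each simple symmetric tensor $\vec{e}_{i_1} \odot \vec{e}_{i_2} \odot \cdots \odot \vec{e}_{i_n}$ is an eigenvector with eigenvalue $\frac{1}{n!}\sum_{\pi \in \Sigma_n} \lambda_{i_1}^{(\pi(1))} \cdots \lambda_{i_n}^{(\pi(n))}$, so that the symmetric tensor product is diagonal with respect to the orthogonal basis of Proposition \ref{Proposition:ONB}, whence the point spectrum and its closure are read off in the standard way. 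Your attention to the commuting hypothesis delivering a single simultaneous eigenbasis (so that the common indexing $\lambda_j^{(i)}$ makes sense) matches the paper's framing, where the diagonal operators are understood to be diagonal with respect to the same orthonormal basis.
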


\section{The shift operator and its adjoint}\label{Chapter:ShiftAndAdjoint}
In this section we find the spectrum of the symmetric tensor product of the unilateral shift and its adjoint.
Let $(Sf)(z) = zf(z)$ denote the unilateral shift on $H^2(\D)$ \cite[Ch.~5]{OTBE}.
Its adjoint is the backward shift $(S^*f)(z) = (f(z)-f(0))/z$.

\begin{Theorem}\label{Theorem:ShiftSpectra}
The selfadjoint operators $S \odot S^*$ and $S \wedge S^*$ satisfy
\begin{equation*}
\sigma_{\textrm{p}}(S \odot S^*)
=\big\{\hspace{-4.5pt}\big\{ \cos\! \big(  \tfrac{(2j-1) \pi} { k+2} \big)  : \text{$k \geq 0$ and $1\leq  j \leq  \lfloor \tfrac{k+2}{2} \rfloor$} \big\}\hspace{-4.5pt}\big\} 
\end{equation*}
and
\begin{equation*}
\sigma_{\textrm{p}}(S \wedge S^*)
=\big\{\hspace{-4.5pt}\big\{ \cos\! \big(  \tfrac{2j \pi} { k+2} \big)  : \text{$k \geq 1$ and $1\leq  j \leq  \lfloor \tfrac{k+1}{2} \rfloor$} \big\}\hspace{-4.5pt}\big\} ,
\end{equation*}
with the eigenvalues in these multisets repeated by multiplicity.  Moreover,
$\sigma (S \odot S^*) = \sigma_{\textrm{ap}} (S \odot S^*) =  \sigma (S \wedge S^*) = \sigma_{\textrm{ap}} (S \wedge S^*)  =  [-1,1]$
and $\norm{S \odot S^*} = \norm{S \wedge S^*}=1$.
\end{Theorem}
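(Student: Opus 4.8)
The plan is to diagonalize everything blockwise by a \textbf{total-degree decomposition} that turns the problem into a family of finite tridiagonal (Jacobi) matrices. Fix the orthonormal basis $e_i = z^i$, $i \geq 0$, for $H^2(\D)$, so that $S e_i = e_{i+1}$ and $S^* e_i = e_{i-1}$ with $S^* e_0 = \vec{0}$. The key observation is that $S \otimes S^*$ and $S^* \otimes S$ each preserve total degree: $S\otimes S^*$ sends $e_i \otimes e_j \mapsto e_{i+1}\otimes e_{j-1}$ and $S^*\otimes S$ sends $e_i \otimes e_j \mapsto e_{i-1}\otimes e_{j+1}$. Hence the operator $T := \tfrac12(S\otimes S^* + S^*\otimes S)$ leaves invariant each finite-dimensional space $V_k := \vecspan\{ e_i \otimes e_{k-i} : 0 \leq i \leq k\}$, and $H^2(\D)\otimes H^2(\D) = \bigoplus_{k\geq 0} V_k$ orthogonally. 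First I would verify that, in the basis $f_i := e_i \otimes e_{k-i}$ ($0\le i\le k$), one has $Tf_i = \tfrac12(f_{i+1}+f_{i-1})$ (omitting out-of-range terms), so $T|_{V_k}$ is $\tfrac12$ times the $(k+1)\times(k+1)$ tridiagonal matrix with zero diagonal and ones on the off-diagonals.

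Next I would invoke the standard spectral description of this Jacobi matrix: its eigenvalues are $2\cos\!\big(\tfrac{m\pi}{k+2}\big)$ for $m = 1,\ldots,k+1$, each simple, with the $m$-th eigenvector having $i$-th entry $\sin\!\big(\tfrac{(i+1)m\pi}{k+2}\big)$. Thus $T|_{V_k}$ has simple eigenvalues $\cos\!\big(\tfrac{m\pi}{k+2}\big)$. Since $T$ commutes with the flip $\hat\pi$, which acts on $V_k$ by reversal $f_i\mapsto f_{k-i}$, and the eigenvalues are simple, each eigenvector is either symmetric or antisymmetric under $\hat\pi$. The identity $\sin(m\pi-\theta)=(-1)^{m+1}\sin\theta$ shows that $\hat\pi$ multiplies the $m$-th eigenvector by $(-1)^{m+1}$. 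Consequently the eigenvalue $\cos\!\big(\tfrac{m\pi}{k+2}\big)$ lies in the spectrum of $S\odot S^*$ (the block of $T$ on $\bigoplus_k V_k\cap \h^{\odot 2}$, by the block form \eqref{eq:UseLots}) exactly when $m$ is odd, and in that of $S\wedge S^*$ exactly when $m$ is even. Substituting $m=2j-1$ and $m=2j$ under the constraint $1\le m\le k+1$ recovers the two multisets stated, the multiplicities arising as the multiset union over $k$ of the simple block spectra.

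For the remaining claims, note that $(S\odot S^*)^*=S^*\odot S=S\odot S^*$ by Proposition~\ref{Proposition:Adjoint} and permutation invariance \eqref{eq:PermInvariance}, so $S\odot S^*$ (and likewise $S\wedge S^*$) is selfadjoint, hence normal, giving $\sigma=\sigma_{\textrm{ap}}$ and $\sigma\subseteq\R$. Proposition~\ref{Proposition:BasicNormInequality} gives $\norm{S\odot S^*}\le\norm{S}\norm{S^*}=1$, so the spectrum lies in $[-1,1]$. Conversely, as $k\to\infty$ the angles $\tfrac{(2j-1)\pi}{k+2}$ (respectively $\tfrac{2j\pi}{k+2}$) fill $(0,\pi)$ with spacing $\tfrac{2\pi}{k+2}\to 0$, so each point spectrum is dense in $[-1,1]$. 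Since the spectrum of the orthogonal direct sum $\bigoplus_k T|_{V_k}$ restricted to the symmetric (resp. antisymmetric) blocks is the closure of the union of the block spectra, I conclude $\sigma(S\odot S^*)=\sigma(S\wedge S^*)=[-1,1]$, and then $\norm{S\odot S^*}=\rho(S\odot S^*)=\max\{|\lambda|:\lambda\in[-1,1]\}=1$ by selfadjointness.

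The step I expect to be the main obstacle is the \emph{parity bookkeeping}: correctly showing that odd indices $m$ produce the symmetric block and even indices the antisymmetric block, and then tracking the eigenvalue multiplicities consistently across the infinitely many blocks $V_k$ to match the stated multisets. The reduction to the Jacobi matrix and the density argument for the continuous part are routine once this parity splitting is established.
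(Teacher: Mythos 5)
Your proposal is correct, and it diverges from the paper's proof at the key step. Both arguments use the same skeleton: the total-degree decomposition $H^2(\D)\otimes H^2(\D)=\bigoplus_k \V_k$, the block form \eqref{eq:UseLots}, the identification of $T|_{\V_k}$ with $\tfrac12$ times the free Jacobi matrix, selfadjointness plus the norm bound to confine the spectrum to $[-1,1]$, and a density argument for the continuous part. Where you differ is in how the eigenvalues get sorted between $\h^{\odot 2}$ and $\h^{\wedge 2}$. The paper writes down the restrictions $T|_{\V_k^+}$ and $T|_{\V_k^-}$ as explicit tridiagonal matrices $B_k$ and $C_k$ in symmetrized and antisymmetrized bases; these have perturbed corner entries, and the paper computes $\sigma(C_k)$ from cited eigenvalue formulas for such matrices (the odd-$k$ case requires solving a trigonometric characteristic equation), then recovers $\sigma(B_k)$ by subtraction from $\sigma(A_k)=\sigma(B_k)\cup\sigma(C_k)$. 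You instead never touch $B_k$ or $C_k$: you use the classical eigenvectors $v^{(m)}_i=\sin\!\big(\tfrac{(i+1)m\pi}{k+2}\big)$ of the unperturbed block $A_k$, observe that $T$ commutes with the flip $\hat\pi$ (which acts on $\V_k$ by $f_i\mapsto f_{k-i}$), note that simplicity of the $k+1$ distinct eigenvalues forces each eigenvector to have definite parity, and read the parity off from $\sin(m\pi-\theta)=(-1)^{m+1}\sin\theta$: odd $m$ gives symmetric eigenvectors, even $m$ antisymmetric ones. Your index bookkeeping checks out ($m=2j-1\le k+1$ gives $j\le\lfloor\tfrac{k+2}{2}\rfloor$; $m=2j\le k+1$ gives $j\le\lfloor\tfrac{k+1}{2}\rfloor$, with nothing antisymmetric at $k=0$), and since the eigenvectors of each block split exactly into the two parity classes, the multiset union over $k$ gives the stated point spectra with the correct multiplicities. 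What your route buys is self-containedness: you need only the textbook spectral data of the discrete free Jacobi matrix, avoiding the boundary-modified matrices and the reference-dependent (and case-split) computation of $\sigma(C_k)$. What the paper's route buys is the explicit matrix models $B_k$ and $C_k$ for $S\odot S^*$ and $S\wedge S^*$, which are of some independent interest.
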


\begin{proof}
Identify $H^2(\D) \otimes H^2(\D)$ with $H^2(\D^2)$ as in Example \ref{Example:Hardy} and consider
\begin{equation*}
T = \frac{1}{2}(S\otimes S^*+S^* \otimes S)(z^i   w^j)
=
\begin{cases}
\frac{1}{2}(z^{i+1}   w^{j-1}+z^{i-1}   w^{j+1}) & \text{if $i,j \geq 1$},\\[2pt]
\frac{1}{2}z^{i+1}   w^{j-1} & \text{if $i = 0$ and $j \geq 1$},\\[2pt]
\frac{1}{2}z^{i-1}   w^{j+1} & \text{if $i \geq1$ and $j =0$},\\[2pt]
0 & \text{if $i=j=0$}.
\end{cases}
\end{equation*}
Define $\V_0 = \V_0^+ = \operatorname{span}\{1\}$ and $\V_0^- = \{ 0\}$.
For $k\geq 1$, let
\begin{align*}
\V_k &= \operatorname{span}\{ z^i w^{k-i} : 0 \leq i \leq k\}, && \text{so $\dim \V_k = k+1$}, \\
\V_{k}^+  &= \operatorname{span}\{z^i w^{k-i} + z^{k-i} w^i: 0 \leq i \leq {\lfloor \tfrac{k}{2}\rfloor}\}, && \text{so $\dim \V_k^+ = \lfloor \tfrac{k}{2} \rfloor + 1$},\\
\V_{k}^- &= \operatorname{span}\{ z^i w^{k-i} - z^{k-i}w^i : 0 \leq i \leq {\lfloor \tfrac{k-1}{2}\rfloor}\}, && \text{so $\dim \V_k^- = \lfloor \tfrac{k-1}{2}\rfloor+1$},
\end{align*}
and note that 
$\dim \V_k =   \dim \V_k^+ + \dim \V_k^-$ for $k\geq 1$
by a parity argument (or Proposition \ref{Proposition:TwoSum}).  Recall from \eqref{eq:H2DirectSum} that
$H^2(\D^2) = H^2_{\operatorname{sym}}(\D^2) \oplus H^2_{\operatorname{asym}}(\D^2)$ is an orthogonal direct sum.
We have $\V_k = \V_k^+ \oplus \V_k^-$ for $k \geq 1$,
in which each $\V_k, \V_{k}^+ , \V_{k}^- $ is $T$-invariant, and
\begin{equation}\label{Eq:directsumspaces}
    H^2(\D^2) = \bigoplus_{k=0}^{\infty} \V_k, \qquad H^2_{\operatorname{sym}}(\D^2) = \bigoplus_{k=0}^{\infty} \V_{k}^+ , \qquad 
    H^2_{\operatorname{asym}}(\D^2) = \bigoplus_{k=1}^{\infty} \V_{k}^- .
\end{equation} 

With respect to the orthonormal basis $\{z^{k-i} w^i\}_{i=0}^k$ of $\V_k$, we identify 
the restriction $T|_{\V_k}$ with the $(k+1) \times (k+1)$ matrix (by convention $A_0 = [0]$)
\begin{equation*}
A_k =\tiny \begin{bmatrix}
0 & \frac{1}{2} &  &  &  & \\
\frac{1}{2} & 0 & \frac{1}{2} &  &  &\\[-5pt]
 & \frac{1}{2} & 0 & \ddots &  &\\
 &  & \ddots & 0 &  \frac{1}{2} &\\
 &  &  & \frac{1}{2} & 0 &\frac{1}{2}\\
  &  &  &  & \frac{1}{2} & 0
\end{bmatrix}.
\end{equation*}
From \cite[Prop.~2.1]{tridiageigen}, we have
\begin{equation}\label{eq:SpectrumAk}
\sigma (A_k) = \big\{ \cos \! \big( \tfrac{j \pi}{  k+2} \big) : j = 1,2, \ldots,k+1 \big\}.
\end{equation}

For $k$ odd, with respect to the orthonormal basis
$\{\frac{1}{\sqrt{2}}(z^{k-i} w^i+z^i w^{k-i})\}_{i=0}^{ \frac{k-1}{2}}$  of $\V_k^+$,
we identify the restriction $T|_{\V_k^+}$ with the $\frac{k+1}{2}\times \frac{k+1}{2}$ matrix
\begin{equation*}
B_k =\tiny \begin{bmatrix}
0 & \frac{1}{2} &  &  &  & \\
\frac{1}{2} & 0 & \frac{1}{2} &  &  &\\[-5pt]
 & \frac{1}{2} & 0 & \ddots &  &\\
 &  & \ddots & 0 &  \frac{1}{2} &\\
 &  &  & \frac{1}{2} & 0 & \frac{1}{2}\\[2pt]
  &  &  &  & \frac{1}{2} & \frac{1}{2}
\end{bmatrix}.
\end{equation*}
For $k$ even, with respect to the orthonormal basis
$\{\frac{1}{\sqrt{2}}(z^{k-i} w^i+z^i w^{k-i})\}_{i=0}^{ \frac{k}{2}-1}\cup \{z^{k/2}w^{k/2}\}$  of $\V_k^+$,
we identify the restriction $T|_{\V_k^+}$ with the $\left(\frac{k}{2}+1\right)\times \left(\frac{k}{2}+1\right)$ matrix
\begin{equation*}
B_k = \tiny\begin{bmatrix}
0 & \frac{1}{2} &  &  &  & \\
\frac{1}{2} & 0 & \frac{1}{2} &  &  &\\[-5pt]
 & \frac{1}{2} & 0 & \ddots &  &\\
 &  & \ddots & 0 &  \frac{1}{2} &\\
 &  &  & \frac{1}{2} & 0 & \frac{1}{\sqrt{2}}\\
  &  &  &  & \frac{1}{\sqrt{2}} & 0
\end{bmatrix} ,
\end{equation*}
with the convention $B_0 = [0]$.  We identify the spectrum of the $B_k$ later.

With respect to the orthonormal basis $\{\frac{1}{\sqrt{2}}(z^i w^{k-i} - z^{k-i} w^i)\}_{i=0}^{\lfloor \frac{k-1}{2}\rfloor}$ of
$\V_k^-$, we identify the restriction $T|_{ \V_{k}^- }$ with the $\lfloor \frac{k+1}{2}\rfloor \times \lfloor \frac{k+1}{2}\rfloor$
matrix (note that $C_1 = [- \frac{1}{2}]$ and $C_2 = [0]$)
\begin{equation*}
C_k = \tiny
    \begin{bmatrix}
0 & \frac{1}{2} &  &  &  & \\
\frac{1}{2} & 0 & \frac{1}{2} &  &  &\\[-5pt]
 & \frac{1}{2} & 0 & \ddots &  &\\
 &  & \ddots & 0 &  \frac{1}{2} &\\
 &  &  & \frac{1}{2} & 0 &\frac{1}{2}\\[1pt]
  &  &  &  &\frac{1}{2} & \frac{(-1)^k - 1}{4}
\end{bmatrix}.
\end{equation*}
For $k\geq 2$ even, 
$\sigma (C_k) =  \{ \cos(\frac{2j \pi}{  k+2} )  : j = 1,2, \ldots, \frac{k}{2} \}$ \cite[Prop.~2.1]{tridiageigen}.
Suppose $k\geq 1$ is odd.
By \cite[eq.~11]{tridiageigen}, $\lambda \in \sigma( 2 C_k)$ if and only if $\lambda =  -2 x $, where 
$x\in[-1,1]$ solves
\begin{equation*}
    (-1 + 2x) \frac{\sin (\frac{1}{2}(k+1) \cos ^{-1}(x))}{\sin (\cos ^{-1} (x))} - \frac{\sin (\frac{1}{2}(k-1) \cos ^{-1}(x))}{\sin (\cos ^{-1} (x))}  = 0.
\end{equation*}
Since $\cos(\frac{(2\ell -1 )\pi}{k+2})$ for $\ell = 1,2, \ldots,\frac{k+1}{2}$
are the distinct solutions to this equation,
$\sigma( C_k) =\{  - \cos(\frac{(2\ell - 1)\pi}{k+2}) : \ell = 1,2,  \ldots,\frac{k+1}{2} \}$.
Since $- \cos{(x)} = \cos{ (\pi - x)}$, we can reindex and rewrite this as
$\sigma (C_k) =   \{ \cos(\tfrac{2j \pi}{k+2}) : j = 1,2,  \ldots,\tfrac{k+1}{2} \}$.
Regardless of the parity of $k$, 
\begin{equation}\label{eq:SpectrumCk}
    \sigma (C_k) = \big\{ \cos \! \big(  \tfrac{2j \pi} { k+2} \big)  : j = 1,2, \ldots, \big\lfloor \tfrac{k+1}{2} \big\rfloor \big\}.
\end{equation}

Since $\mathcal{V}_k=\V_{k}^+ \oplus\V_{k}^- $, up to unitary equivalence $A_k = B_k \oplus C_k$.  Thus,
\begin{equation}\label{eq:SpectrumTripleUnion}
    \sigma (A_k) = \sigma (B_k) \cup \sigma (C_k).
\end{equation}
From \eqref{eq:SpectrumAk}, \eqref{eq:SpectrumCk}, and \eqref{eq:SpectrumTripleUnion}, we obtain
\begin{equation}\label{specBk}
    \sigma (B_k) = \big\{ \cos\! \big(  \tfrac{(2j-1) \pi} { k+2} \big)  : j = 1,2, \ldots, \big\lfloor \tfrac{k+2}{2} \big\rfloor \big\}. 
\end{equation}

Since $S \odot S^*$ and $S \wedge S^*$ are selfadjoint and have norm at most $1$,
their spectra are contained in $[-1,1]$.
Up to unitary equivalence, \eqref{eq:H2DirectSum} and \eqref{Eq:directsumspaces} imply that
\begin{equation}\label{Eq:directsum}
S \odot S^* = \bigoplus_{k=0}^{\infty} B_k \quad \text{and} \quad S \wedge S^* = \bigoplus_{k=1}^{\infty} C_k .
\end{equation}
This yields the claimed point spectra of $S \odot S^*$ and $S \wedge S^*$.  
A density argument reveals that $[-1,1] = \sigma_{\textrm{p}}(S \odot S^*)^- \subseteq \sigma_{\textrm{ap}}(S \odot S^*) \subseteq \sigma(S \odot S^*) \subseteq [-1,1]$, so equality holds throughout.  A similar argument treats $S \wedge S^*$.  
\end{proof}

\section{Shifts and diagonal operators}\label{Chapter:ShiftDiagonal}

We consider here the symmetric tensor product of shift operators and diagonal operators.
This setting suggests working on the sequence space $\ell^2$ instead of $H^2(\D)$ \cite[Sec.~1.2]{OTBE}.
Let $\vec{e}_0, \vec{e}_1, \ldots$ be the standard basis of $\ell^2$ and consider
the unilateral shift $S \vec{e}_i = \vec{e}_{i+1}$ \cite[Ch.~3]{OTBE}.  Its adjoint is given by
$S^* \vec{e}_i = \vec{e}_{i-1}$ for $i \geq 1$ and $S^* \vec{e}_0 = 0$.

\begin{Theorem}\label{Theorem:ShiftDiagSpec}
Let $M = \diag(\mu_0,\mu_1,\ldots)$ be a bounded diagonal operator on $\ell^2$.
\begin{enumerate}
\item $\frac{1}{\sqrt{2}} \norm{M} \leq \norm{S \odot M} \leq \| M \|$. Both inequalities are sharp.

\item If some $\mu_i = 0$ or the set of nonzero $\mu_i$ is bounded away from $0$, then $0 \in \sigma_{\mathrm{p}}(S \odot M)$.

\item $\sigma_{\mathrm{p}} (S \odot M) \subseteq \{ 0 \}$. 
\end{enumerate}
\end{Theorem}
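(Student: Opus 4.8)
The plan rests on one structural fact. From the identity $\S_2(S,M)(\vec{e}_i \odot \vec{e}_j) = \tfrac12(S\vec{e}_i \odot M\vec{e}_j + M\vec{e}_i \odot S\vec{e}_j)$ established in the proof of Proposition \ref{Proposition:Invariant}, a direct computation gives
\[
(S \odot M)(\vec{e}_i \odot \vec{e}_j) = \tfrac{1}{2}\bigl(\mu_j\,\vec{e}_{i+1}\odot\vec{e}_j + \mu_i\,\vec{e}_i \odot \vec{e}_{j+1}\bigr).
\]
Setting $D_m := \operatorname{span}\{\vec{e}_i \odot \vec{e}_j : i + j = m\}$, this shows that $\h \odot \h = \bigoplus_{m \geq 0} D_m$ is an orthogonal sum of finite-dimensional subspaces with $(S \odot M)D_m \subseteq D_{m+1}$; that is, $S \odot M$ is graded and strictly raises the total degree. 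Every part of the theorem should follow by exploiting this grading.

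For part (a), the upper bound is immediate from Proposition \ref{Proposition:BasicNormInequality}(a) together with $\norm{S} = 1$. For the lower bound I would invoke Theorem \ref{Theorem:Norm}(a) applied to the pair $(S,M)$: because $S$ is an isometry, $\norm{S\vec{x}} = \norm{\vec{x}}$, so $\tfrac{1}{\sqrt2}\norm{M} = \tfrac{1}{\sqrt2}\sup_{\norm{\vec{x}}=1}\norm{S\vec{x}}\,\norm{M\vec{x}} \leq \norm{S \odot M}$. Sharpness of the upper bound comes from $M = I$: under the identification in Example \ref{Example:Hardy}, $S \odot I$ is multiplication by $\tfrac{z+w}{2}$ on $H^2_{\operatorname{sym}}(\D^2)$, and testing its adjoint against the symmetric normalized reproducing kernels at the diagonal points $(a,a)$ with $|a| \to 1$ gives $\norm{S\odot I} \geq |a| \to 1$, so $\norm{S\odot I} = 1 = \norm{I}$. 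Sharpness of the lower bound comes from $M = \diag(1,0,0,\ldots)$: the action formula annihilates every $\vec{e}_i\odot\vec{e}_j$ with $i,j \geq 1$ and turns the chain generated by $\vec{e}_0\odot\vec{e}_0$ into a weighted shift with weights $\tfrac{1}{\sqrt2},\tfrac12,\tfrac12,\ldots$, whence $\norm{S\odot M} = \tfrac1{\sqrt2} = \tfrac1{\sqrt2}\norm{M}$.

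For part (c) the grading does the work directly. Suppose $(S\odot M)\vec{v} = \lambda \vec{v}$ with $\lambda \neq 0$ and $\vec{v}\neq 0$, and write $\vec{v} = \sum_{m \geq m_0}\vec{v}_m$ with $\vec{v}_m \in D_m$ and $\vec{v}_{m_0}\neq 0$. Since $S\odot M$ maps into strictly higher degree, the $D_{m_0}$-component of $(S\odot M)\vec{v}$ vanishes, while the $D_{m_0}$-component of $\lambda\vec{v}$ is $\lambda\vec{v}_{m_0}$. Hence $\lambda\vec{v}_{m_0} = 0$, forcing $\vec{v}_{m_0} = 0$, a contradiction; thus $\sigma_{\mathrm p}(S \odot M) \subseteq \{0\}$.

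For part (b) the grading reduces the kernel to $\ker(S\odot M) = \bigoplus_m \ker\bigl((S\odot M)|_{D_m}\bigr)$, so it suffices to exhibit on some anti-diagonal a nonzero element killed by $S \odot M$. If $\mu_{i_0} = 0$, the action formula gives $(S\odot M)(\vec{e}_{i_0}\odot\vec{e}_{i_0}) = \mu_{i_0}\,\vec{e}_{i_0}\odot\vec{e}_{i_0+1} = 0$, so $0 \in \sigma_{\mathrm p}(S\odot M)$. In general, writing an element of $D_m$ as $\sum_{i+j=m} c_{ij}\vec{e}_i\odot\vec{e}_j$ with $c_{ij}=c_{ji}$ and setting $y_i = c_{i,m-i}$, equating each coefficient of the image in $D_{m+1}$ to zero yields the triangular system $\mu_0 y_0 = 0$ and $\mu_{m+1-k}\,y_{k-1} + \mu_k\, y_k = 0$ for $1 \leq k \leq m$, subject to the symmetry $y_i = y_{m-i}$; a null eigenvector exists precisely when one of these finite systems is singular. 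The main obstacle is the analysis of this family under the remaining hypothesis that the nonzero $\mu_i$ be bounded away from $0$: the boundary relation $\mu_0 y_0 = 0$ interacts with the symmetry $y_i = y_{m-i}$ and with the magnitudes of the $\mu_i$, and the delicate point is to locate the anti-diagonal on which a genuine square-summable null vector first appears and to confirm that it lies in $\ker(S\odot M)$ rather than merely witnessing failure of boundedness below. Once such a null vector is produced, part (c) upgrades the conclusion to $\sigma_{\mathrm p}(S \odot M) = \{0\}$.
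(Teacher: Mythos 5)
Parts (a) and (c) of your proposal are correct, and both take routes genuinely different from the paper's. For (a), the paper passes to adjoints, deducing the bounds from the companion result for $S^* \odot M$ (Theorem \ref{Theorem:DiagBackShift}.a) via $(S \odot M^*)^* = S^* \odot M$; your direct appeal to Theorem \ref{Theorem:Norm}.a, using that $S$ is an isometry, is more self-contained, and both of your sharpness examples are valid --- indeed your weighted-shift analysis of $M = \diag(1,0,0,\ldots)$ computes $\norm{S \odot M} = \tfrac{1}{\sqrt{2}}$ exactly, which is more than the single-vector estimate the paper records. For (c), the paper runs a double induction on the coefficients $a_{k,\ell}$ of a putative eigenvector; your grading argument --- $(S\odot M)D_m \subseteq D_{m+1}$, so the lowest nonzero homogeneous component of an eigenvector for $\lambda \neq 0$ cannot survive --- is shorter, cleaner, and correct, since the projection onto $D_{m_0}$ is continuous and kills every term $(S \odot M)\vec{v}_m \in D_{m+1}$.

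The genuine problem is part (b), and it is not the ``delicate point'' you describe. You correctly establish $\ker(S\odot M) = \bigoplus_m \ker\bigl((S\odot M)|_{D_m}\bigr)$ and correctly derive the kernel equations on $D_m$: $\mu_0 y_0 = 0$ and $\mu_{m+1-k}\,y_{k-1} + \mu_k\, y_k = 0$ for $1 \leq k \leq m$. But this system is triangular with the $\mu_k$ as pivots: if every $\mu_i \neq 0$, then $y_0 = 0$ and each successive equation forces $y_k = 0$, so every restriction $(S \odot M)|_{D_m}$ is injective and hence $\ker(S \odot M) = \{\vec{0}\}$. There is no anti-diagonal to ``locate'': your own framework proves that the second alternative of (b) is \emph{false} --- if no $\mu_i$ vanishes, then $0 \notin \sigma_{\mathrm{p}}(S \odot M)$ regardless of whether the $\mu_i$ are bounded away from $0$, and then by your part (c) in fact $\sigma_{\mathrm{p}}(S \odot M) = \emptyset$. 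A cross-check independent of the combinatorics: for $M = I$ the operator $S \odot I$ is multiplication by $(z+w)/2$ on $H^2_{\operatorname{sym}}(\D^2)$, which is injective because a nonzero analytic function is not a zero divisor in the space of analytic functions; yet the hypothesis of (b) holds. The flaw is in the paper's own proof: its four-step construction never enforces \eqref{eq:Kernel4} for $k=1$ and odd $\ell - k \geq 3$. There Step (1) gives $a_{0,\ell} = 0$, so the equation reduces to $\mu_1 a_{1,\ell-1} = 0$; but Step (3) forces $a_{1,3} \neq 0$ (the pair $(2a_{22},a_{13})$ must be a nonzero multiple of $(-\mu_3,\mu_2)$), and Step (4) then forces $a_{1,5}, a_{1,7}, \ldots \neq 0$, so the constructed $\vec{v}$ is not annihilated by $S \odot M$. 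Your write-up should therefore retain only the case ``some $\mu_i = 0$'' of (b) (which you proved, and which with (c) yields $\sigma_{\mathrm{p}}(S \odot M) = \{0\}$ there), and record that in the complementary case the point spectrum is empty.
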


\begin{proof}
(a) Since $M^* = \diag(\overline{\mu_0},\overline{\mu_1},\ldots)$ is a diagonal operator
and  $(S \odot M^*)^* = S^* \odot M$ by Proposition \ref{Proposition:Adjoint}, this follows from 
Theorem \ref{Theorem:DiagBackShift}.a below.

\medskip\noindent(b) 
Suppose that the set of nonzero $\mu_i$ is bounded away from zero. Note that for all $i,j \geq 0$,
\begin{equation}\label{eq:SDiag}
(S \odot M)({\vec{e}_i} \odot {\vec{e}_j}) 
= \frac{\mu_j}{2}{\vec{e}_{i+1}} \odot {\vec{e}_j}+\frac{\mu_i}{2}{\vec{e}_i} \odot {\vec{e}_{j+1}}.
\end{equation}
If some $\mu_i = 0$, then \eqref{eq:SDiag} ensures that $0 \in \sigma_{\mathrm{p}}(S \odot M)$
since $(S \odot M)( \vec{e}_i \odot \vec{e}_i) = 0 $. 
Thus, we may assume that $|\mu_i| \geq \delta > 0$ for all $i\geq 0$.  Define $C = \sqrt{\norm{M}/\delta}$.

Let $\sum_{i \leq j } | a_{ij}|^2 < \infty$ and let 
$\vec{v} = 2\sum_{0 \leq i \leq j< \infty} a_{ij} \vec{e}_i \odot \vec{e}_j$,
which is well defined by Lemma \ref{Lemma:SquareSumConverge}. Then \eqref{eq:SDiag} ensures that
\begin{equation}\label{eq:SMeigen}
    (S \odot M) \vec{v} = \sum_{0 \leq i \leq j< \infty} a_{ij}\left( \mu_j \vec{e}_{i+1}\odot \vec{e}_j + \mu_i \vec{e}_i \odot \vec{e}_{j+1} \right).
\end{equation}
When \eqref{eq:SMeigen} is expanded, the coefficient of $\vec{e}_k \odot \vec{e}_{\ell}$ for $k \leq \ell$ is
\begin{align}
	&0 && \text{if $k = \ell = 0$}, \\
    & 2\mu_0 a_{0, 0}  &&\text{if $k=0$ and $\ell = 1$}, \label{eq:Kernel0}\\
    & \mu_0 a_{0, \ell-1}  &&\text{if $k=0$ and $\ell \geq 2$}, \label{eq:Kernel1}\\
    &\mu_k a_{k-1, k}   &&\text{if $1 \leq k=\ell$}, \label{eq:Kernel2}\\
     &2\mu_k a_{k, k}  + \mu_{k+1} a_{k-1, k+1}  &&\text{if $k \geq 1$ and $\ell = k+1$}, \label{eq:Kernel3}\\
    &\mu_{k} a_{k, \ell-1}  + \mu_{\ell} a_{k-1, \ell}  &&\text{if $k \geq 1$ and $\ell \geq k+2$}.\label{eq:Kernel4}
\end{align}
Then $(S \odot M)\vec{v} = \vec{0}$ if and only if 
the $a_{k,\ell}$ are square summable and \eqref{eq:Kernel1} - \eqref{eq:Kernel4} vanish for all $\ell \geq k \geq 0$.  We define such $a_{k,\ell}$, not all zero, in four steps; see
Figure \ref{Figure:BigMatrix}.

\begin{figure}
\begin{equation*}\footnotesize
\begin{bmatrix}
\color{violet} a_{0,0} & \color{violet}a_{0,1} & \color{violet}a_{0,2} & \color{violet}a_{0,3} & \color{violet}a_{0,4} & \color{violet}a_{0,5} & \color{violet}a_{0,6} \color{violet}& \color{violet}a_{0,7}  &\color{violet}a_{0,8} \\
 \color{lightgray}a_{1,0} & \color{green!50!black}a_{1,1} & \color{red}a_{1,2} & \color{green!50!black}a_{1,3} & \color{red}a_{1,4} & \color{blue!80!black}a_{1,5} & \color{red}a_{1,6} & \color{blue!80!black}a_{1,7} & \color{red}a_{1,8}\\
 \color{lightgray}a_{2,0} & \color{lightgray}a_{2,1} & \color{green!50!black}a_{2,2} & \color{red}a_{2,3} & \color{green!50!black}a_{2,4} & \color{red}a_{2,5} & \color{blue!80!black}a_{2,6} & \color{red}a_{2,7} & \color{blue!80!black}a_{2,8}\\
\color{lightgray} a_{3,0} & \color{lightgray}a_{3,1} & \color{lightgray}a_{3,2} & \color{green!50!black}a_{3,3} & \color{red}a_{3,4} & \color{green!50!black}a_{3,5} & \color{red}a_{3,6} & \color{blue!80!black}a_{3,7} & \color{red}a_{3,8}\\
\color{lightgray} a_{4,0} & \color{lightgray}a_{4,1} & \color{lightgray}a_{4,2} &\color{lightgray} a_{4,3} & \color{green!50!black}a_{4,4} & \color{red}a_{4,5} & \color{green!50!black}a_{4,6} & \color{red}a_{4,7} & \color{blue!80!black}a_{4,8}\\
\color{lightgray} a_{5,0} & \color{lightgray}a_{5,1} & \color{lightgray}a_{5,2} & \color{lightgray}a_{5,3} & \color{lightgray}a_{5,4} & \color{green!50!black}a_{5,5} & \color{red}a_{5,6} & \color{green!50!black}a_{5,7} & \color{red}a_{5,8}\\
\color{lightgray} a_{6,0} & \color{lightgray}a_{6,1} & \color{lightgray}a_{6,2} & \color{lightgray}a_{6,3} & \color{lightgray}a_{6,4} & \color{lightgray}a_{6,5} & \color{green!50!black}a_{6,6} & \color{red}a_{6,7} & \color{green!50!black} a_{6,8}\\
\color{lightgray} a_{7,0} & \color{lightgray}a_{7,1} & \color{lightgray}a_{7,2} & \color{lightgray}a_{7,3} & \color{lightgray}a_{7,4} & \color{lightgray}a_{7,5} & \color{lightgray}a_{7,6} & \color{green!50!black}a_{7,7}  & \color{red}a_{7,8}\\
\color{lightgray} a_{8,0} & \color{lightgray}a_{8,1} & \color{lightgray}a_{8,2} & \color{lightgray}a_{8,3} & \color{lightgray}a_{8,4} & \color{lightgray}a_{8,5} & \color{lightgray}a_{8,6} & \color{lightgray}a_{8,7}  & \color{green!50!black} a_{8,8}\\
\end{bmatrix}
\end{equation*}
\caption{Colors denote the step where the $a_{k,\ell}$ are fixed:
Step (1) is in {\color{violet}violet};
(2) is in {\color{red}red};
(3) is in {\color{green!50!black}green}; and
(4) is in {\color{blue!80!black}blue}.
The symmetry of symmetric tensors permits us to focus on $\ell \geq k \geq 0$.
The {\color{violet}violet} and {\color{red}red} values are zero.}
\label{Figure:BigMatrix}
\end{figure}

\begin{enumerate}
\item[(1)] Let $a_{0,0} = 0$.  For $\ell \geq 1$, let $a_{0,\ell-1} = 0$ so that \eqref{eq:Kernel1} vanishes.

\item[(2)] For each $k \geq 2$, let 
$a_{k-1,k} = a_{k-1,k+2} = a_{k-1,k+4} = \cdots = 0$.
Then \eqref{eq:Kernel2} and \eqref{eq:Kernel4} vanish for $k \geq 2$ and even $\ell \geq k+2$.

\item[(3)] Let $a_{1,1}=0$ and, for each $k \geq 2$, let $a_{k,k}$ and $a_{k-1,k+1}$ be such that
\begin{equation}\label{eq:PairOrthog}
\begin{bmatrix}
2a_{k,k} \\
a_{k-1,k+1}
\end{bmatrix} 
\perp
\begin{bmatrix}
\overline{\mu_k} \\
\overline{\mu_{k+1}}
\end{bmatrix}
\quad \text{and} \quad 
0<
\left\| 
\begin{bmatrix}
2a_{k,k} \\
a_{k-1,k+1}
\end{bmatrix} \right \| < \frac{1}{(k+1)^{3/2}}.
\end{equation}
Then \eqref{eq:Kernel3} vanishes for $k \geq 2$ and $\ell = k+1$.

\item[(4)] For $k \geq 2$, let
\begin{equation}\label{eq:Recur}
    a_{k-1, k+3} = - a_{k, k+2} \frac{\mu_k}{\mu_{k+3}}, \qquad
    a_{k-1, k+5}= - a_{k, k+4} \frac{\mu_k}{\mu_{k+5}},\ldots.  
\end{equation}
Then \eqref{eq:Kernel4} vanishes for all $k \geq 1$ with odd $\ell \geq k+3$.
\end{enumerate}
This completes the definition of the $a_{k,\ell}$.  We must prove that they are square summable.

For $k \geq 1$, \eqref{eq:PairOrthog} yields
\begin{equation}\label{eq:CubeDecay}
    |a_{k,k}|^2 < \frac{1}{(k+1)^3} \quad \text{and} \quad |a_{k-1,k+1}|^2 < \frac{1}{(k+1)^3}.
\end{equation}
Then \eqref{eq:Recur}, and then \eqref{eq:CubeDecay} with $k+1$ in place of $k$
ensure that 
\begin{equation}\label{eq:Secondn}
    |a_{k-1,k+3}|^2 
    = |a_{k, k+2}|^2 \left| \frac{\mu_{k}}{\mu_{k+3}} \right|^2 
    \leq C|a_{k, k+2}|^2
    \leq \frac{C}{(k+2)^3}
\end{equation}
for $k \geq 1$.
Next \eqref{eq:Recur} and then \eqref{eq:Secondn} with $k+1$ in place of $k$ imply that
\begin{equation*}
|a_{k-1,k+5}|^2 
= |a_{k, k+4}|^2 \left| \frac{\mu_{k}}{\mu_{k+5}} \right|^2 
\leq C|a_{k, k+4}|^2 
\leq \frac{C}{(k+3)^3},
\end{equation*}
so induction yields
\begin{equation}\label{eq:Boundkr}
| a_{k,k+2r} |^2 \leq \frac{C}{(k + r)^3}.
\end{equation}
Then Step 1, Step 2, and \eqref{eq:Boundkr} ensure that
\begin{equation*}
\sum_{0\leq k \leq \ell} |a_{k,\ell}|^2
= \sum_{k=1}^{\infty} \sum_{\ell \geq k} |a_{k,\ell}|^2 
= \sum_{k=1}^{\infty} \sum_{r=0}^{\infty} |a_{k,k+2r}|^2 
\leq C \sum_{k=1}^{\infty} \sum_{r=0}^{\infty} \frac{1}{(k + r)^3} ,
\end{equation*}
which is finite by a standard argument in the study of elliptic functions \cite[Prop.~10.4.2]{SimonComplex}.\footnote{The double sum can be explicitly evaluated. Write the summands in an array with $r$ indexing the columns and $k$ the rows.  Sum
each column and simplify to reduce the double sum to the well-known $\sum_{k=1}^{\infty} \frac{1}{k^2} = \frac{\pi^2}{6}$.}
Thus, $\vec{v}$ is a well-defined vector in the kernel of $S \odot M$.

\medskip\noindent(c) Suppose that $\lambda \neq 0$ and $(S \odot M) \vec{v} = \lambda \vec{v}$, in which
$\vec{v} = 2\sum_{0 \leq i \leq j< \infty} a_{ij} \vec{e}_i \odot \vec{e}_j$ and
$\sum_{0\leq i \leq j < \infty} | a_{ij}|^2 < \infty$.  Then \eqref{eq:SDiag} ensures that
\begin{align}
\vec{0} 
&= ((S \odot M) -\lambda I ) \vec{v}
= 2\sum_{0 \leq i \leq j< \infty} a_{i,j} ((S \odot M) -\lambda I )(\vec{e}_i\odot \vec{e}_j) \\
&=\!\!\sum_{0 \leq i \leq j< \infty} \!\!\!\! \!\! a_{i,j}\mu_j \vec{e}_{i+1}\odot{\vec{e}_j}
+ \!\!\!\!  \sum_{0 \leq i \leq j< \infty}  \!\!\!\!\!\!  a_{i,j}\mu_i \vec{e}_i\odot \vec{e}_{j+1} 
-  \!\!\!\!  \sum_{0 \leq i \leq j< \infty}  \!\!\!\! \!\! \lambda a_{i,j}{\vec{e}_i}\odot{\vec{e}_j} .\label{eq:Coeff4SM}
\end{align}
When \eqref{eq:Coeff4SM} is expanded, the coefficient of $\vec{e}_k \odot \vec{e}_{\ell}$ for $k \leq \ell$ is
\begin{align}
	0=&-\lambda a_{0,0}&& \text{if $k = \ell = 0$}, \label{eq:Eigen0}\\
    0=& 2\mu_0 a_{0, 0} -\lambda a_{0,1} &&\text{if $k=0$ and $\ell = 1$}, \label{eq:EigenX}\\
    0=& \mu_0 a_{0, \ell-1} -\lambda a_{0,\ell} &&\text{if $k=0$ and $\ell \geq 2$}, \label{eq:Eigen1}\\
    0=&\mu_k a_{k-1, k} - \lambda a_{k,k}  &&\text{if $1 \leq k=\ell$}, \label{eq:Eigen2}\\
     0=&2\mu_k a_{k, k}  + \mu_{k+1} a_{k-1, k+1}  -\lambda a_{k,k+1}&&\text{if $k \geq 1$ and $\ell = k+1$}, \label{eq:Eigen3}\\
    0=&\mu_{k} a_{k, \ell-1}  + \mu_{\ell} a_{k-1, \ell} - \lambda a_{k,\ell} &&\text{if $k \geq 1$ and $\ell \geq k+2$}.\label{eq:Eigen4}
\end{align}
We use induction to prove that $a_{k,\ell} = 0$ for $0 \leq k \leq \ell< \infty$.
For $k \geq0$, let $\texttt{P}(k)$ be the statement ``$a_{k, k+i} = 0$ for all $i \geq 0$.'' 
The truth of $\texttt{P}(0)$ follows from \eqref{eq:Eigen0}, which ensures that $a_{0,0}=0$, and
induction on $\ell$ using \eqref{eq:Eigen1}, which yields $a_{0,\ell} = 0$ for $\ell \geq 0$.

Suppose $\texttt{P}(k-1)$ is true: $a_{k-1,\ell} = 0$ for $\ell \geq k-1$.
Then \eqref{eq:Eigen2} yields
$\lambda a_{k,k} = \mu_{k} a_{k-1,k} = 0$, so $a_{k,k}=0$.
Next \eqref{eq:Eigen3} ensures that
$\lambda a_{k,k+1} =2 \mu_{k} a_{k,k}  + \mu_{k+1} a_{k-1,k+1} = 0$, so $a_{k,k+1}=0$.
Finally, \eqref{eq:Eigen4} and induction on $\ell$ tell us that
$\lambda a_{k,\ell} = \mu_{k} a_{k, \ell-1}  + \mu_{\ell} a_{k-1, \ell} = 0$ for $\ell \geq k+2$.
Thus, $\texttt{P}(k)$ is true, so $\vec{v} = \vec{0}$ and $\lambda \notin \sigma_{\textrm{p}}(S \odot M)$.
\end{proof}


\begin{Theorem}\label{Theorem:DiagBackShift}
Let $M = \diag(\mu_0,\mu_1,\ldots)$ be a bounded diagonal operator on $\ell^2$.
\begin{enumerate}
\item $\frac{1}{\sqrt{2}} \norm{M} \leq \norm{S^* \odot M} \leq \| M \|$. Both inequalities are sharp.
\item $\{ |z| < \frac{1}{2}|\mu_0| \} \cup \{0\}\subset \sigma_{\textrm{p}} ( S^* \odot M ) $.
\end{enumerate}
\end{Theorem}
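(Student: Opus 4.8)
The plan is to first record the action of $S^*\odot M$ on the standard symmetric tensors, exactly as in \eqref{eq:SDiag}: for all $i,j\ge 0$,
\[
(S^*\odot M)(\vec{e}_i\odot\vec{e}_j)=\tfrac{\mu_j}{2}\,\vec{e}_{i-1}\odot\vec{e}_j+\tfrac{\mu_i}{2}\,\vec{e}_i\odot\vec{e}_{j-1},
\]
with the convention that a term with a negative index is $\vec{0}$. The upper bound in (a) is then immediate from Proposition \ref{Proposition:BasicNormInequality}(a), since $\norm{S^*}=1$. For the lower bound I would \emph{not} test on vectors of the form $\vec{x}\odot\vec{x}$ via Theorem \ref{Theorem:Norm}(a): when $\norm{M}=|\mu_0|$ strictly dominates $\sup_{i\ge1}|\mu_i|$, that route only yields $\tfrac{1}{2\sqrt2}\norm{M}$. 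Instead I would test on $\vec{e}_{j+1}\odot\vec{e}_j$, whose image contains the \emph{diagonal} term $\tfrac{\mu_j}{2}\vec{e}_j\odot\vec{e}_j$ of full norm $|\mu_j|/2$ (with no factor $1/\sqrt2$), orthogonal to the remaining summand $\tfrac{\mu_{j+1}}{2}\vec{e}_{j+1}\odot\vec{e}_{j-1}$. Since $\norm{\vec{e}_{j+1}\odot\vec{e}_j}=1/\sqrt2$, this gives $\norm{S^*\odot M}\ge \tfrac{|\mu_j|/2}{1/\sqrt2}=\tfrac{|\mu_j|}{\sqrt2}$ for every $j\ge0$; taking the supremum over $j$ yields $\norm{S^*\odot M}\ge\tfrac{1}{\sqrt2}\norm{M}$.

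For the sharpness claims in (a), I would take $M=\diag(1,0,0,\ldots)$ for the lower bound: here $S^*\odot M$ annihilates every $\vec{e}_i\odot\vec{e}_j$ with $i,j\ge1$ and restricts to the invariant subspace $\operatorname{span}\{\vec{e}_0\odot\vec{e}_j:j\ge0\}^-$, on which, in the orthonormal basis $\vec{e}_0\odot\vec{e}_0,\ \sqrt2\,\vec{e}_0\odot\vec{e}_j$, it is a weighted backward shift with weights $1/\sqrt2,\tfrac12,\tfrac12,\ldots$, hence of norm $1/\sqrt2=\tfrac{1}{\sqrt2}\norm{M}$. For the upper bound I would take $M=I$ and produce the eigenvectors $\vec{v}_a=\sum_{i,j\ge0}a^{i+j}\,\vec{e}_i\odot\vec{e}_j$ for $0\le a<1$ (well defined by Lemma \ref{Lemma:SquareSumConverge}); a reindexing in the displayed action shows $(S^*\odot I)\vec{v}_a=a\,\vec{v}_a$, so $\norm{S^*\odot I}\ge\sup_{a<1}a=1=\norm{I}$.

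For part (b), I would exploit that $\mathcal{W}:=\operatorname{span}\{\vec{e}_0\odot\vec{e}_j:j\ge0\}^-$ is $(S^*\odot M)$-invariant, since the displayed action gives $(S^*\odot M)(\vec{e}_0\odot\vec{e}_j)=\tfrac{\mu_0}{2}\vec{e}_0\odot\vec{e}_{j-1}$ for $j\ge1$ and $\vec{0}$ for $j=0$. In the orthonormal basis $\vec{h}_0=\vec{e}_0\odot\vec{e}_0$ and $\vec{h}_j=\sqrt2\,\vec{e}_0\odot\vec{e}_j$ for $j\ge1$ of $\mathcal{W}$, the restriction is the weighted backward shift $\vec{h}_j\mapsto w_j\vec{h}_{j-1}$ with $w_1=\mu_0/\sqrt2$ and $w_j=\mu_0/2$ for $j\ge2$. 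Solving $(S^*\odot M)\vec{x}=\lambda\vec{x}$ for $\vec{x}=\sum_j x_j\vec{h}_j$ gives the recurrence $w_{k+1}x_{k+1}=\lambda x_k$, whence (taking $x_0=1$) $|x_k|^2=\tfrac12\big(\tfrac{2|\lambda|}{|\mu_0|}\big)^{2k}$; this is square summable exactly when $|\lambda|<\tfrac12|\mu_0|$, producing a genuine nonzero eigenvector. Since $\vec{h}_0\in\ker(S^*\odot M)$ regardless of $\mu_0$, one also has $0\in\sigma_{\mathrm{p}}$, so $\{\,|z|<\tfrac12|\mu_0|\,\}\cup\{0\}\subseteq\sigma_{\mathrm{p}}(S^*\odot M)$.

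The main obstacle is the lower bound in (a): the obvious $\vec{x}\odot\vec{x}$ test fails to reach $\tfrac{1}{\sqrt2}\norm{M}$ precisely when the dominant diagonal entry of $M$ sits at index $0$, and the fix is the ``diagonal-target'' choice $\vec{e}_{j+1}\odot\vec{e}_j$, whose image keeps a full-norm diagonal summand. After that the remaining steps are routine, the only bookkeeping being the two distinct weights $\mu_0/\sqrt2$ and $\mu_0/2$ in the weighted backward shift underlying both the sharpness example and part (b).
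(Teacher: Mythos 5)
Your proof is correct, and although its skeleton (basis action, multiplicativity for the upper bound, test vectors for the lower bound, a weighted backward shift on $\operatorname{span}\{\vec{e}_0\odot\vec{e}_j : j\geq 0\}^-$ for part (b)) parallels the paper's, three steps differ in substance, and one is an outright improvement. For the lower bound the paper tests on the diagonal vectors $\vec{e}_i\odot\vec{e}_i$, using $(S^*\odot M)(\vec{e}_i\odot\vec{e}_i)=\mu_i\,\vec{e}_{i-1}\odot\vec{e}_i$; that identity needs $i\geq 1$, so the paper's argument as written does not cover the case where $\sup_i|\mu_i|$ is approached only at $i=0$ (e.g.\ $M=\diag(1,\tfrac12,\tfrac12,\ldots)$), since $(S^*\odot M)(\vec{e}_0\odot\vec{e}_0)=\vec{0}$. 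Your test vectors $\vec{e}_{j+1}\odot\vec{e}_j$ work uniformly for all $j\geq 0$, because the image always retains the orthogonal diagonal summand $\tfrac{\mu_j}{2}\vec{e}_j\odot\vec{e}_j$ of full norm; this repairs that edge case. For sharpness of the upper bound, the paper invokes Theorem \ref{Theorem:SpectrumSame} to get $\sigma(S^*\odot I)=\tfrac12(\sigma(S^*)+\sigma(S^*))$ and hence spectral radius $1$, whereas you exhibit explicit eigenvectors $\vec{v}_a$ with eigenvalues $a\to 1$; yours is more elementary and self-contained (it avoids the Brown--Pearcy-style machinery behind that theorem) and is just the $M=I$ instance of the part (b) construction. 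For sharpness of the lower bound, the paper only exhibits a single vector on which $S^*\odot\diag(1,0,0,\ldots)$ attains $1/\sqrt2$, which by itself proves $\norm{S^*\odot M}\geq \tfrac{1}{\sqrt2}$; genuine sharpness also requires the reverse inequality $\norm{S^*\odot M}\leq\tfrac{1}{\sqrt2}$, which your identification of the restriction as a weighted backward shift with weights $1/\sqrt2,\tfrac12,\tfrac12,\ldots$ (and the vanishing of the operator on the complementary block) supplies explicitly. Part (b) is essentially the paper's argument: solving your recurrence reproduces exactly the paper's eigenvector $\sum_{j\geq 0}(2\lambda/\mu_0)^j\,\vec{e}_0\odot\vec{e}_j$ from \eqref{eq:SAdjointD}.
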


\begin{proof}
(a) Since $\|S^*\| = 1$, Theorem \ref{Proposition:BasicNormInequality} yields $\norm{S^* \odot M} \leq \| M \|$.
Equality holds for $M= I$ because $\sigma(S^*) = \D^-$ \cite[Prop.~5.2.4.a]{OTBE} and
$\sigma(S^*\odot I) = \frac{1}{2}(\sigma(S^*)+\sigma(S^*)) \subseteq \D^-$
by Theorem \ref{Theorem:SpectrumSame}.  Thus, $\norm{S^* \odot I} \geq 1 = \norm{I}$.

Suppose that $M \vec{e}_i = \mu_i \vec{e}_i$ for $i\geq 0$.  Then
\begin{equation}\label{eq:SAdjointD}
    (S^* \odot M) (\vec{e}_i \odot \vec{e}_j)
    =
    \begin{cases}
    \frac{1}{2}( \mu_j \vec{e}_{i-1}\odot \vec{e}_j+\mu_i \vec{e}_i\odot \vec{e}_{j-1} ) & \text{if $i,j\neq 0$},\\[3pt]
    \frac{1}{2}( \mu_i \vec{e}_i\odot \vec{e}_{j-1} ) & \text{if $0 = i < j$},\\[3pt]
    \frac{1}{2}( \mu_j \vec{e}_{i-1}\odot \vec{e}_j  ) & \text{if $0=j<i$},\\[3pt]
     \vec{0} & \text{if $i=j=0$}.
    \end{cases}
\end{equation}
For each $\epsilon>0$, there is a $\mu_i$ such that
$\norm{M}-\epsilon \leq |\mu_i|$. 
Then \eqref{eq:SAdjointD} ensures that
\begin{equation}\label{eq:CompAbove}
 \frac{\norm{M}-\epsilon}{\sqrt{2}}\leq     \frac{| \mu_i |}{\sqrt{2}} =\| \mu_i \vec{e}_{i-1} \odot \vec{e}_{i} \| = \| (S^*\odot M)( \vec{e}_{i} \odot \vec{e}_{i}) \| \leqslant  \norm{S^* \odot M}.
\end{equation}
Let $\epsilon\to 0$ to obtain the desired lower bound.

If $\mu_i = \delta_{i0}$ for all $i\geq 0$, then 
$\norm{(S^* \odot M)(\sqrt{2} \vec{e}_0 \odot \vec{e}_1)}=\norm{\frac{1}{\sqrt{2}}(  \vec{e}_0 \odot \vec{e}_0 )} = \frac{1}{\sqrt{2}}$ and $\norm{M}=1$, so the lower bound is sharp.

\medskip\noindent(b) If $\mu_0 = 0$, the last line of \eqref{eq:SAdjointD} ensures that $0 \in \sigma_{\textrm{p}}(S^* \odot M)$.
Let $\mu_0 \neq 0$ and $| \lambda | < \frac{1}{2}|\mu_0|$.  
Lemma \ref{Lemma:SquareSumConverge} permits us to define
$\vec{v} = \sum_{j=0}^{\infty} \frac{(2\lambda)^j}{\mu_0^j} \vec{e}_0 \odot \vec{e}_j$.
Then \eqref{eq:SAdjointD} ensures that $\lambda \in \sigma_{\textrm{p}}(S^*\odot M)$ since
\begin{align*}
(S^* \odot M)\vec{v}
&= (S^* \odot M)\Big( \sum_{j=0}^{\infty} \frac{(2\lambda)^j}{\mu_0^j} \vec{e}_0 \odot \vec{e}_j \Big) 
=  \sum_{j=0}^{\infty} \frac{(2\lambda)^j}{\mu_0^j} (S^* \odot M) (\vec{e}_0 \odot \vec{e}_j ) \\
&=\frac{1}{2} \sum_{j=0}^{\infty} \frac{(2\lambda)^j}{\mu_0^j}  \mu_0 \vec{e}_0 \odot \vec{e}_{j-1} 
= \lambda \sum_{j=1}^{\infty} \frac{(2\lambda)^{j-1}}{\mu_0^{j-1}}   \vec{e}_0 \odot \vec{e}_{j-1} 
= \lambda \vec{v}. \qedhere
\end{align*}
\end{proof}

\section{Questions for further research}\label{Section:Questions}
We conclude with questions to spur future research.  Some are general, others specific.
Perhaps the answers to a few are buried in the literature, although we did not find them.

Lemma \ref{Lemma:VectorBounds} prompts us to consider symmetric tensor products of more than two vectors.  
If $\vec{x}_1,\vec{x}_2,\ldots, \vec{x}_n \in \h$, then 
$\norm{\vec{x}_1 \odot \vec{x}_2 \odot \cdots \odot \vec{x}_n} 
= \norm{ \S_n ( \vec{x}_1 \otimes \vec{x}_2 \otimes \cdots \otimes \vec{x}_n)}
\leq \norm{ \vec{x}_1 \otimes \vec{x}_2 \otimes \cdots \otimes \vec{x}_n }
= \norm{ \vec{x}_1} \norm{ \vec{x}_2} \cdots \norm{ \vec{x}_n}$.  Equality
occurs if $\vec{x}_1 = \vec{x}_2 = \cdots = \vec{x}_n$.  Thus, only lower bounds on 
$\norm{\vec{x}_1 \odot \vec{x}_2 \odot \cdots \odot \vec{x}_n}$ are of interest.
Here is a partial answer.

\begin{Lemma}\label{Lemma:ThreeVectors}
$\frac{1}{\sqrt{6}} \| \vec{x}_1 \| \| \vec{x}_2 \| \| \vec{x}_3 \| \leq \| \vec{x}_1 \odot \vec{x}_2 \odot \vec{x}_3 \| \leq \| \vec{x}_1 \| \| \vec{x}_2 \| \| \vec{x}_3 \|$ for $\vec{x}_1, \vec{x}_2, \vec{x}_3 \in \h$.
These inequalities are sharp
\end{Lemma}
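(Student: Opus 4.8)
The plan is to reduce everything to the Gram matrix $G = (\langle \vec{x}_i, \vec{x}_j\rangle)_{i,j=1}^3$ and then to a single permanent estimate. The upper inequality was already recorded in the discussion preceding the statement: it holds because $\S_3$ is an orthogonal projection, hence a contraction, and is attained when $\vec{x}_1 = \vec{x}_2 = \vec{x}_3$. So essentially all the work is in the lower bound.

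First I would compute $\| \vec{x}_1 \odot \vec{x}_2 \odot \vec{x}_3 \|^2$ exactly. Since $\S_3$ is an orthogonal projection (Proposition \ref{Proposition:OrthoProj}), we have $\| \S_3 \vec{w}\|^2 = \langle \S_3 \vec{w}, \vec{w}\rangle$ for $\vec{w} = \vec{x}_1 \otimes \vec{x}_2 \otimes \vec{x}_3$, and the inner-product rule \eqref{eq:InnerProductMultiply} yields
\begin{equation*}
\| \vec{x}_1 \odot \vec{x}_2 \odot \vec{x}_3 \|^2 = \frac{1}{6}\sum_{\pi \in \Sigma_3}\prod_{i=1}^{3}\langle \vec{x}_{\pi(i)}, \vec{x}_i\rangle = \frac{1}{6}\operatorname{perm}(G).
\end{equation*}
Expanding over the six permutations and writing $g_{ij} = \langle \vec{x}_i,\vec{x}_j\rangle$ gives
\begin{equation*}
6\| \vec{x}_1 \odot \vec{x}_2 \odot \vec{x}_3 \|^2 = g_{11}g_{22}g_{33} + g_{33}|g_{12}|^2 + g_{22}|g_{13}|^2 + g_{11}|g_{23}|^2 + 2\operatorname{Re}(g_{12}g_{23}g_{31}),
\end{equation*}
so the lemma reduces to the permanental bound $\operatorname{perm}(G) \geq g_{11}g_{22}g_{33} = \|\vec{x}_1\|^2\|\vec{x}_2\|^2\|\vec{x}_3\|^2$.

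For the lower bound I would normalize. Both sides of the desired inequality are positively homogeneous of the same degree in each $\vec{x}_i$ (because $\odot$ is multilinear), so after discarding the trivial case where some $\vec{x}_i = \vec{0}$ I may assume $\|\vec{x}_1\| = \|\vec{x}_2\| = \|\vec{x}_3\| = 1$, i.e. $g_{11}=g_{22}=g_{33}=1$. Writing $r = g_{12}$, $p = g_{23}$, $q = g_{13}$ so that $g_{31} = \overline{q}$, the claim becomes $|r|^2 + |p|^2 + |q|^2 + 2\operatorname{Re}(rp\overline{q}) \geq 0$. The Cauchy--Schwarz inequality gives $|p| = |\langle \vec{x}_2,\vec{x}_3\rangle| \leq 1$, and then, using $\operatorname{Re} z \geq -|z|$,
\begin{equation*}
|r|^2 + |p|^2 + |q|^2 + 2\operatorname{Re}(rp\overline{q}) \geq |r|^2 + |q|^2 + |p|^2 - 2|r||q||p| \geq (|r|-|q|)^2 + |p|^2 \geq 0,
\end{equation*}
where the middle step uses $|p| \leq 1$. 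This yields $\operatorname{perm}(G) \geq 1$, hence $\| \vec{x}_1 \odot \vec{x}_2 \odot \vec{x}_3 \| \geq 1/\sqrt{6}$, and the general lower bound follows upon undoing the normalization.

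For sharpness, the lower bound is attained when $\vec{x}_1,\vec{x}_2,\vec{x}_3$ are mutually orthogonal, since then $G$ is diagonal, every off-diagonal term vanishes, and $6\| \vec{x}_1 \odot \vec{x}_2 \odot \vec{x}_3 \|^2 = \|\vec{x}_1\|^2\|\vec{x}_2\|^2\|\vec{x}_3\|^2$; the upper bound is attained at $\vec{x}_1=\vec{x}_2=\vec{x}_3$. The main obstacle is the sign-indefinite cross term $2\operatorname{Re}(g_{12}g_{23}g_{31})$, which blocks a naive term-by-term comparison in the permanent; the normalization to unit vectors combined with the elementary arithmetic--geometric-mean estimate above is exactly what controls it. (Alternatively, $\operatorname{perm}(G) \geq \prod_i g_{ii}$ for positive semidefinite $G$ is Marcus's permanental analogue of Hadamard's theorem, which one could invoke directly in place of the hands-on computation.)
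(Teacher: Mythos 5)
Your proof is correct and takes essentially the same route as the paper's: both expand $\| \vec{x}_1 \odot \vec{x}_2 \odot \vec{x}_3 \|^2$ into the permutation sum of products of Gram-matrix entries, normalize to unit vectors, and control the single sign-indefinite term $2\operatorname{Re}\big(\langle \vec{x}_1,\vec{x}_2\rangle \langle \vec{x}_2,\vec{x}_3\rangle \langle \vec{x}_3,\vec{x}_1\rangle\big)$ via the inequality $x^2+y^2+z^2 \geq 2xyz$ on $[0,1]$ (which the paper cites as Muirhead and you prove directly), with the same sharpness examples. Your permanent-of-the-Gram-matrix framing and the optional appeal to Marcus's permanental Hadamard inequality are a tidy repackaging, not a genuinely different argument.
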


\begin{proof}
The upper bound is discussed above.  Without loss of generality suppose
$\vec{x}_1, \vec{x}_2, \vec{x}_3 $ have unit norm.  Then
\begin{align}
     \!\!\!\!\!\!\!\!\!\!36\| \vec{x}_1 \odot \vec{x}_2 \odot \vec{x}_3 \|^2 
    &=   \sum_{\tau,\pi \in \Sigma_3} \langle \vec{x}_{\tau(1)} \otimes \vec{x}_{\tau(2)} \otimes \vec{x}_{\tau(3)} , \vec{x}_{\pi(1)} \otimes \vec{x}_{\pi(2)} \otimes \vec{x}_{\pi(3)} \rangle  \\
    & = 6 + \underbrace{\sum_{\tau \neq \pi} \langle \vec{x}_{\tau(1)} \otimes \vec{x}_{\tau(2)} \otimes \vec{x}_{\tau(3)} , \vec{x}_{\pi(1)} \otimes \vec{x}_{\pi(2)} \otimes \vec{x}_{\pi(3)}}_{c} \rangle , \label{eq:MidEqThing2}
\end{align}
in which $c\in \R$ is of the form
\begin{align}
c &= 6 (| \langle \vec{x}_2, \vec{x}_3 \rangle |^2 +  | \langle \vec{x}_1, \vec{x}_2 \rangle |^2   + | \langle \vec{x}_1, \vec{x}_3 \rangle |^2 ) \\
&\qquad+ \underbrace{6 \langle \vec{x}_1, \vec{x}_2 \rangle \langle \vec{x}_2, \vec{x}_3 \rangle \langle \vec{x}_3, \vec{x}_1 \rangle + 6  \langle \vec{x}_1, \vec{x}_3 \rangle \langle \vec{x}_3, \vec{x}_2 \rangle \langle \vec{x}_2, \vec{x}_1 \rangle}_{d}. 
\end{align}
Muirhead's inequality \cite[Ch.~2, Sec.~18]{Hardy} shows that for  $x,y,z \in [0,1]$,
\begin{equation}\label{eq:Muirhead}
    x^2 + y^2 + z^2 \geq 2 xyz.
\end{equation}
Let $x = |\langle \vec{x}_1, \vec{x}_2\rangle|$, $y = |\langle \vec{x}_2, \vec{x}_3\rangle|$,
and $z = |\langle \vec{x}_3, \vec{x}_1\rangle| $ in \eqref{eq:Muirhead} and get (since $d\in \R$)
\begin{equation*}
6 \big(| \langle \vec{x}_2, \vec{x}_3\rangle |^2 +  | \langle \vec{x}_1, \vec{x}_2\rangle |^2   +  |\langle \vec{x}_1, \vec{x}_3\rangle |^2 \big) \geq 12 |\langle \vec{x}_1, \vec{x}_2\rangle| |\langle \vec{x}_2, \vec{x}_3 \rangle| |\langle \vec{x}_3, \vec{x}_1 \rangle| \geq - d .
\end{equation*}
Thus, $c \geq 0$ and we obtain the desired lower bound.  If $\vec{x}_1, \vec{x}_2, \vec{x}_3$ are pairwise orthogonal, then $c=0$ in \eqref{eq:MidEqThing2}, so the lower bound is sharp.
\end{proof}

\begin{Problem}
Is $\frac{1}{\sqrt{n!}} \norm{ \vec{x}_1} \norm{\vec{x}_2} \cdots \norm{ \vec{x}_n} \leq
\norm{ \vec{x}_1 \odot \vec{x}_2 \odot \cdots \odot \vec{x}_n}$ for $\vec{x}_1, \vec{x}_2, \ldots,\vec{x}_n \in \h$?
\end{Problem}

Lemma \ref{Lemma:ThreeVectors} leads to an analogue of Theorem \ref{Theorem:Norm}.a for three operators.

\begin{Theorem}\label{Theorem:Norm2}
$ \displaystyle \frac{1}{\sqrt{6}}\sup_{\substack{\vec{x} \in \h \\ \|\vec{x}\| = 1}} 
\big\{ { \|A\vec{x}\| \|B\vec{x}\| \| C\vec{x} \|} \big\} \leq \| A \odot B \odot C \|$ for $A, B, C \in \B( \h )$.
\end{Theorem}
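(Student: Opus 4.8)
The plan is to follow the template of Theorem~\ref{Theorem:Norm}.a verbatim, substituting the three-vector lower bound of Lemma~\ref{Lemma:ThreeVectors} for the two-vector bound of Lemma~\ref{Lemma:VectorBounds}. First I would fix a unit vector $\vec{x} \in \h$ and observe that $\vec{x} \odot \vec{x} \odot \vec{x} = \S_3(\vec{x} \otimes \vec{x} \otimes \vec{x}) = \vec{x} \otimes \vec{x} \otimes \vec{x}$, since every $\hat{\pi}$ fixes $\vec{x} \otimes \vec{x} \otimes \vec{x}$; consequently this symmetric tensor has norm $\|\vec{x}\|^3 = 1$.

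Next I would evaluate $A \odot B \odot C$ on this vector. The computation in the proof of Proposition~\ref{Proposition:Invariant}, applied with $A_1 = A$, $A_2 = B$, $A_3 = C$ and $\vec{v}_1 = \vec{v}_2 = \vec{v}_3 = \vec{x}$, gives
\begin{equation*}
(A \odot B \odot C)(\vec{x} \odot \vec{x} \odot \vec{x}) = \frac{1}{6}\sum_{\sigma \in \Sigma_3} A_{\sigma(1)}\vec{x} \odot A_{\sigma(2)}\vec{x} \odot A_{\sigma(3)}\vec{x}.
\end{equation*}
By the permutation invariance \eqref{eq:PermInvariance} of simple symmetric tensors, each of the six summands equals $A\vec{x} \odot B\vec{x} \odot C\vec{x}$, so the right-hand side collapses to $A\vec{x} \odot B\vec{x} \odot C\vec{x}$.

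The conclusion then follows by chaining Lemma~\ref{Lemma:ThreeVectors} with the operator-norm inequality:
\begin{equation*}
\frac{\|A\vec{x}\| \|B\vec{x}\| \|C\vec{x}\|}{\sqrt{6}} \leq \|A\vec{x} \odot B\vec{x} \odot C\vec{x}\| = \|(A \odot B \odot C)(\vec{x} \odot \vec{x} \odot \vec{x})\| \leq \|A \odot B \odot C\|,
\end{equation*}
where the final step uses that $\vec{x} \odot \vec{x} \odot \vec{x}$ has unit norm. Taking the supremum over all unit vectors $\vec{x}$ yields the claim. There is no genuine obstacle here: once Lemma~\ref{Lemma:ThreeVectors} (whose substance is the Muirhead estimate) is in hand, the theorem is a direct corollary of it together with Proposition~\ref{Proposition:Invariant}. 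The only point deserving a moment's care is the collapse of the symmetrized sum to a single simple symmetric tensor, which is immediate from \eqref{eq:PermInvariance}.
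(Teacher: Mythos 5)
Your proof is correct and is exactly the argument the paper intends: the paper states Theorem \ref{Theorem:Norm2} as the analogue of Theorem \ref{Theorem:Norm}.a obtained by replacing Lemma \ref{Lemma:VectorBounds} with Lemma \ref{Lemma:ThreeVectors}, which is precisely your template (evaluate $A \odot B \odot C$ at the unit vector $\vec{x} \odot \vec{x} \odot \vec{x} = \vec{x} \otimes \vec{x} \otimes \vec{x}$, collapse the symmetrized sum to $A\vec{x} \odot B\vec{x} \odot C\vec{x}$, and chain the two norm inequalities). The only cosmetic remark is that the collapse of the six summands uses permutation invariance of simple symmetric tensors of \emph{vectors} (i.e., $\S_3\hat{\pi} = \S_3$) rather than the operator identity \eqref{eq:PermInvariance} you cite, but the fact is equally immediate and the argument is sound.
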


\begin{Problem}
For $A_1, A_2, \ldots, A_n \in \B(\h)$ is
\begin{equation}\label{QQ}
    \frac{1}{\sqrt{n!}}\sup_{ \substack{\vec{x} \in \h \\ \|\vec{x}\| = 1}} 
    \big\{ { \|A_1\vec{x}\| \|A_2\vec{x}\|\cdots \| A_n \vec{x} \|} \big\} \leq\| A_1 \odot A_2 \odot \cdots \odot A_n \| ?
\end{equation}
\end{Problem}

Proposition \ref{Proposition:Diagonals} provides the sharp inequalities 
$ \|L\| \| M \| (\sqrt{2}-1) \leqslant \| L \odot M \| \leq \|L\| \|M\|$ for diagonal operators $L,M$
(with respect to the same orthonormal basis).
Since the upper bound easily generalizes, the lower bound is of greater interest.

\begin{Problem}
Let $A_1, A_2, \ldots, A_n \in \B(\h)$ be diagonal operators (with respect to the same orthonormal basis).
Find a sharp lower bound, in the spirit of Proposition \ref{Proposition:Diagonals}, 
on $\norm{ A_1 \odot A_2 \odot \cdots \odot A_n}$ in terms of $\norm{A_1}, \norm{A_2},\ldots,\norm{A_n}$.
\end{Problem}

The Weyl--von Neumann--Berg theorem asserts that every normal operator on a separable Hilbert space
is the sum of a diagonal operator and a compact operator of arbitrarily small norm \cite[Cor.~II.4.2]{MR1402012}.
This suggests possible extensions to normal operators.

\begin{Problem}
Let $A_1, A_2, \ldots, A_n \in \B(\h)$ be commuting normal operators.
Find a sharp lower bound, in the spirit of Proposition \ref{Proposition:Diagonals}, 
on $\norm{ A_1 \odot A_2 \odot \cdots \odot A_n}$ in terms of $\norm{A_1}, \norm{A_2},\ldots,\norm{A_n}$.
\end{Problem}

Proposition \ref{Proposition:MultiDiagSpec} suggests the following.

\begin{Problem}
Let $A_1, A_2, \ldots, A_n \in \B(\h)$ be commuting normal operators.
Describe $\sigma(A_1 \odot A_2 \odot \cdots \odot A_n)$ (and its parts).
\end{Problem}

Let us now consider the unilateral shift $S$ and its adjoint.
Theorem \ref{Theorem:ShiftSpectra} identified the norm and spectrum of $S \odot S^*$
and $S \wedge S^*$.  What can be said about other combinations?

\begin{Problem}
Identify the norm and spectrum of arbitrary symmetric or antisymmetric
tensor products of $S$ and $S^*$ (for example, consider $S^2 \odot S \odot S^{*3}$ and $S^2 \wedge S \wedge S^{*3}$).
\end{Problem}

\begin{Problem}
Describe the norm and spectrum of $S_{\alpha} \odot S_{\alpha}^*$ and $S_{\alpha} \wedge S_{\alpha}^*$, 
in which $S_{\alpha}$ is a weighted shift operator.  What can be said if more factors are included?
\end{Problem}

Theorems \ref{Theorem:ShiftDiagSpec} and \ref{Theorem:DiagBackShift} answer some questions
about $S \odot M$ and $S^* \odot M$, in which $M=\diag(\mu_0,\mu_1,\ldots)$ is a diagonal operator.
However, a complete picture eludes us.

\begin{Problem}
Identify the norm and spectrum (and its parts) for $S \odot M$ and $S^* \odot M$.
\end{Problem}

The general problem suggested by the previous questions is the following.

\begin{Problem}
For $A_1,A_2,\ldots,A_n \in B(\h)$, describe the norm and spectrum (and its parts)  of
$A_1 \odot A_2 \odot \cdots \odot A_n$ and $A_1 \wedge A_2 \wedge \cdots \wedge A_n$.
\end{Problem}

There are countless other questions that can be raised.
For example, what can be said about symmetric tensor products of composition operators?

\bibliographystyle{plain}
\bibliography{SymmetricTensorOperator}
\end{document}